\newtheorem*{theorem*}{Theorem}
\newtheorem{rem}{Remark}[section]
\newtheorem{lemma}{Lemma}
\newtheorem{cor}{Corollary}
\newtheorem{prop}{Proposition}
\newtheorem{theorem}{Theorem}
\newcommand{\sgn}{\;\!{\rm sign}}
\newcommand{\R}{\mathbb R}
\newcommand{\RR}{\mathbb R}
\newcommand{\prob}{\mathbf{P}}
\newcommand{\esp}{\mathbb{E}}
\newcommand{\e}{\varepsilon}
\newcommand{\Cov}{\mathop{\bf Cov}\nolimits}
\newcommand{\bbetaz}{\boldsymbol{\beta^\star}}
\newcommand{\betaz}{\boldsymbol{\beta^\star}}
\newcommand{\barbbeta}{\;\bar{\!\!\boldsymbol{\beta}}}
\newcommand{\hatbbeta}{\,\hat{\!\boldsymbol{\beta}}}
\newcommand{\calP}{\mathcal{P}}
\newcommand{\calX}{\mathcal{X}}
\newcommand{\calC}{\mathcal{C}}
\newcommand{\bfA}{\mathbf{A}}
\newcommand{\bfB}{\mathbf{B}}
\newcommand{\bfC}{\mathbf{C}}
\newcommand{\bfI}{\mathbf{I}}
\newcommand{\bfZ}{\mathbf{Z}}
\newcommand{\bfX}{\mathbf{X}}
\newcommand{\bfU}{\mathbf{U}}
\newcommand{\bfM}{\mathbf{M}}
\newcommand{\bfV}{\mathbf{V}}
\newcommand{\bfD}{\mathbf D}
\newcommand{\bfW}{\mathbf{W}}
\newcommand{\bbfX}{\mathcal X}
\newcommand{\mca}{\mathcal{A}}
\newcommand{\mcM}{\mathcal{M}}
\newcommand{\mcb}{\mathcal{B}}
\newcommand{\mce}{\mathcal{E}}
\newcommand{\bs}{\boldsymbol}
\newcommand{\bd}{\bs d}
\newcommand{\bu}{\bs u}
\newcommand{\bx}{\bs x}
\newcommand{\by}{\bs y}
\newcommand{\bbeta}{\bs\beta}
\newcommand{\bxi}{\bs\xi}
\newcommand{\btheta}{\bs\theta}
\newcommand{\bEWA}{\hat\bbeta{}^{\text{\rm EWA}}}
\newcommand{\bfBEWA}{\hat\bfB{}^{\text{\rm EWA}}}
\newcommand{\EWA}{\hat\beta^{\text{\rm EWA}}}
\newcommand{\bbetaLS}{\hat\bbeta{}^{\text{\rm LS}}}
\newcommand{\betaLS}{\hat\beta{}^{\text{\rm LS}}}
\newcommand{\tr}{{\rm Tr}}
\newcommand{\rk}{{\rm rank}}
\newcommand{\bfSigma}{\mathbf \Sigma}
\newcommand{\bfPi}{\mathbf \Pi}
\newcommand{\bfBz}{\mathbf{B^\star}}
\newcommand{\hatbfB}{\hat{\mathbf{B}}}
\newcommand{\barbfB}{\bar{\mathbf{B}}}
\newcommand{\bbfM}{\mathcal{M}_{m_1, m_2}}
\definecolor{newgreen}{rgb}{0.0, 0.5, 0.3}
\def\hat{\widehat}
\let\orgdescriptionlabel\descriptionlabel
\renewcommand*{\descriptionlabel}[1]{%
    \let\orglabel\label
    \let\label\@gobble
    \phantomsection
    \protected@edef\@currentlabel{#1}
    \let\label\orglabel
    \orgdescriptionlabel{#1}%
}
\begin{document}

\begin{frontmatter}

\title{On the Exponentially Weighted Aggregate with the Laplace Prior}
\runtitle{EWA with Laplace Prior}

\begin{aug}
\author{\fnms{Arnak S.} \snm{Dalalyan,}\ead[label=e2]{arnak.dalalyan@ensae.fr}}
\author{\fnms{Edwin} \snm{Grappin}\ead[label=e3]{edwin.grappin@ensae.fr}}
\and
\author{\fnms{Quentin} \snm{Paris}\ead[label=e4]{quentin.paris@gmail.com}}

\runauthor{A.\ Dalalyan et al.}

\affiliation{CREST, ENSAE, Universit\'e Paris-Saclay, National Research University - Higher School of Economics
and Laboratory of Stochastic Analysis and its Applications}

\address{3 avenue Pierre Larousse,
92245 Malakoff, France.\hfill\break
26 Shabolovka street, 119049 Moscow, Russia.}
\end{aug}

\begin{abstract}
In this paper, we study the statistical behaviour of the Exponentially Weighted Aggregate (EWA) in the problem of
high-dimensional regression with fixed design. Under the assumption that the underlying regression vector is sparse,
it is reasonable to use the Laplace distribution as a prior. The resulting estimator and, specifically, a particular
instance of it referred to as the Bayesian lasso, was already used in the statistical literature because of its
computational convenience, even though no thorough mathematical analysis of its statistical properties was carried out.
The present work fills this gap by establishing sharp oracle inequalities for the EWA with the Laplace prior. These
inequalities show that if the temperature parameter is small, the EWA with the Laplace prior satisfies the same type
of oracle inequality as the lasso estimator does, as long as the quality of estimation is measured by the prediction
loss. Extensions of the proposed methodology to the problem of prediction with low-rank matrices are considered.
\end{abstract}

\begin{keyword}[class=MSC]
\kwd[Primary ]{62J05}
\kwd[; secondary ]{62H12}
\end{keyword}

\begin{keyword}
\kwd{sparsity}
\kwd{Bayesian lasso}
\kwd{oracle inequality}
\kwd{exponential weights}
\kwd{high-dimensional regression}
\kwd{trace regression}
\kwd{low-rank matrices}
\end{keyword}

\end{frontmatter}

\section{Introduction}

We investigate statistical properties of the Exponentially Weighted Aggregate (EWA) in the context of
high-dimensional linear regression with fixed design and under the sparsity scenario.
This corresponds to considering data that consist of $n$ random observations $y_{1},\ldots,y_{n}\in\R$ and $p$
fixed covariates $\bx^{1},\ldots,\bx^{p}\in \R^{n}$. We further assume that there is a vector $\bbeta^{\star}\in\R^p$
such that the residuals $\xi_i=y_{i}-\beta^{\star}_{1}\bx^{1}_{i}-\ldots-\beta^{\star}_{p}\bx^{p}_{i}$ are independent,
zero mean random variables. In vector notation, this reads as
\begin{equation}
\label{model}
\by=\bfX\bbeta^{\star}+\bxi,
\end{equation}
where $\by=(y_{1},\ldots, y_{n})^{\top}$ is the response vector, $\bfX=(\bx^{1},\ldots, \bx^{p})\in\R^{n\times p}$
is the design matrix and $\bxi$ is the noise vector. For simplicity, in all mathematical results, the noise vector is assumed to
be distributed according to the Gaussian distribution ${\mathcal N}(0,\sigma^2\bfI_{n})$. We are mainly interested in obtaining
mathematical results that cover the high-dimensional setting. This means that our goal is to establish risk bounds that
can be small even if the ambient dimension $p$ is large compared to the sample size. In order to attain this goal, we will
consider the, by now, usual sparsity scenario. In other words, the established risk bounds are small if the underlying
large vector $\bbeta^*$ is well approximated by a sparse vector. Note that this setting can be extended to the matrix case,
sometimes termed trace-regression \citep{Rohde11,KLT11}. Indeed, if the rows $\bx_1,\ldots,\bx_n$ of the design matrix $\bfX$
are replaced by $m_1\times m_2$ matrices $\bfX_1,\ldots,\bfX_n$, then the regression vector $\bbeta^\star$ is replaced by a
$m_1\times m_2$ matrix $\bfB^\star$ and the model of trace regression is
\begin{equation}
\label{model-tr}
y_i=\tr(\bfX_i^\top\bfB^\star)+\xi_i,\qquad i=1,\ldots,n.
\end{equation}
Our focus here is on the statistical properties related to the prediction risk. The important questions of variable selection
and estimation in various norms are beyond the scope of the present work.

In the aforementioned vector- and trace-regression models, the most thoroughly studied statistical procedures of estimation and
prediction rely on the principle of penalised least squares\footnote{Or, more generally, on the penalised empirical risk minimisation}.
In the vector-regression model, assuming that the
quadratic loss is used, this corresponds to analysing the properties of the estimator
\begin{equation}
\label{PLS}
\hat\bbeta^{\rm PLS} \in\text{arg}\min_{\bbeta\in \R^{p}} \bigg\{\frac1{2n}\sum_{i=1}^n (y_i-\bx_i^\top\bbeta)^2 + \lambda\, \text{Pen}(\bbeta)\bigg\},
\end{equation}
where $\lambda>0$ is a tuning parameter and $\text{Pen} : \R^p\to\R$ is a sparsity promoting penalty function. The literature
on this topic is so rich that it would be impossible to cite here all the relevant papers. We refer the interested reader to
the books \citep{bookBuhlman2011,koltchinskii2011,G15,vandeGeer2016} and the references therein. Among the sparsity promoting penalties,
one can mention the $\ell_0$ penalty (which for various choices of $\lambda$ leads to the BIC \citep{BIC}, the AIC \citep{AIC}
or to Mallows's Cp \citep{Mallows}), the $\ell_1$ penalty or the lasso \citep{Tib96}, the $\ell_q$ (with $0<q<1$) or
the bridge penalty \citep{Bridge,Bridge1}, the SCAD \citep{SCAD}, the minimax concave penalties \citep{MC+}, the entropy
\citep{Kolt09}, the SLOPE \citep{SLOPE1,SLOPE2}, etc.

The aggregation by exponential weights is an alternative approach to the problems of estimation and prediction that, roughly speaking, replaces
the minimisation by the averaging. Assuming that every vector $\bbeta\in\R^p$ is a candidate for estimating the true vector $\bbeta^\star$,
aggregation (cf., for instance, the survey \citep{TsybICM}) consists in computing a weighted average of the candidates. Naturally,
the weights are to be chosen in a data-driven way. In the case of the exponentially weighted aggregate (EWA), the weight
$\hat\pi_n(\bbeta)$ of each candidate vector $\bbeta$ has the exponential form
\begin{equation}
\label{potential}
\hat\pi_n(\bbeta)\propto \exp\big(-V_n(\bbeta)/\tau\big), \qquad
\text{where}\quad V_n(\bbeta)=\frac1{2n}\sum_{i=1}^n (y_i-\bx_i^\top\bbeta)^2 + \lambda\, \text{Pen}(\bbeta)
\end{equation}
is the potential used above for defining the penalised least squares estimator and $\tau>0$ is an additional tuning
parameter referred to as the temperature. Using this notation, the EWA is defined by
\begin{equation}
\label{EWALP}
\bEWA =\int_{\R^p}\bbeta\,\hat\pi_n(\bbeta)\,{\rm d}\bbeta.
\end{equation}
Exponential weights have been used for a long time  in statistical learning theory (cf., for instance, \cite{Vovk}).
Their use in statistics was initiated by Yuhong Yang in \citep{Yang6,Yang5,Yang4,Yang3} and by Olivier Catoni in a series
of preprints, later on included in \citep{Catoni2,Catoni1}.
Precise risk bounds for the EWA in the model of regression with fixed
design have been established in \citep{Leung2006, DT07, DT08, DT12b, DS12, DRXZ, Golubev1, Golubev2}. In the model of
regression with random design, the counterpart of the EWA, often referred to as mirror averaging, has been thoroughly
studied in \citep{YNTV,JRT, Audibert, Chesneau, Gaiffas, DT12a, Lecue}. Note that when the temperature $\tau$ equals
$\sigma^2/n$, the EWA coincides with the Bayesian posterior mean in the regression model with Gaussian noise provided
that the prior is defined by $\pi_0(\bbeta)\propto \exp(-\lambda\,\text{Pen}(\bbeta)/\tau)$. Thanks to this
analogy, we will call $\hat\pi_n$ pseudo-posterior density. Let us mention here that, considering
the path $\tau\mapsto \bEWA$ for $\tau\in(0,\sigma^2/n]$, we get a continuous interpolation between the penalised least
squares and the Bayesian posterior mean.

\begin{figure}
	\includegraphics[width = 150pt]{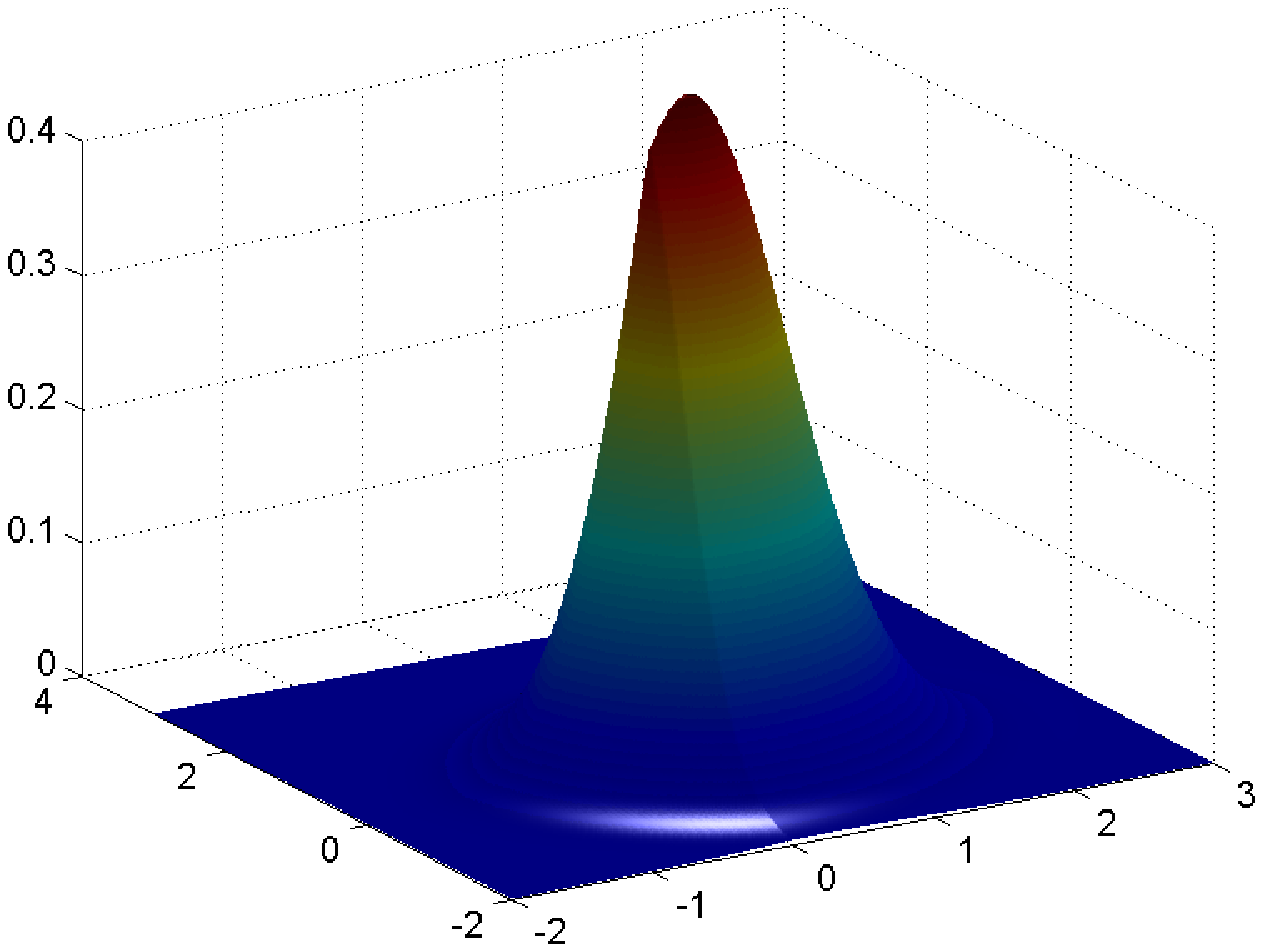}\ \ \includegraphics[width = 150pt]{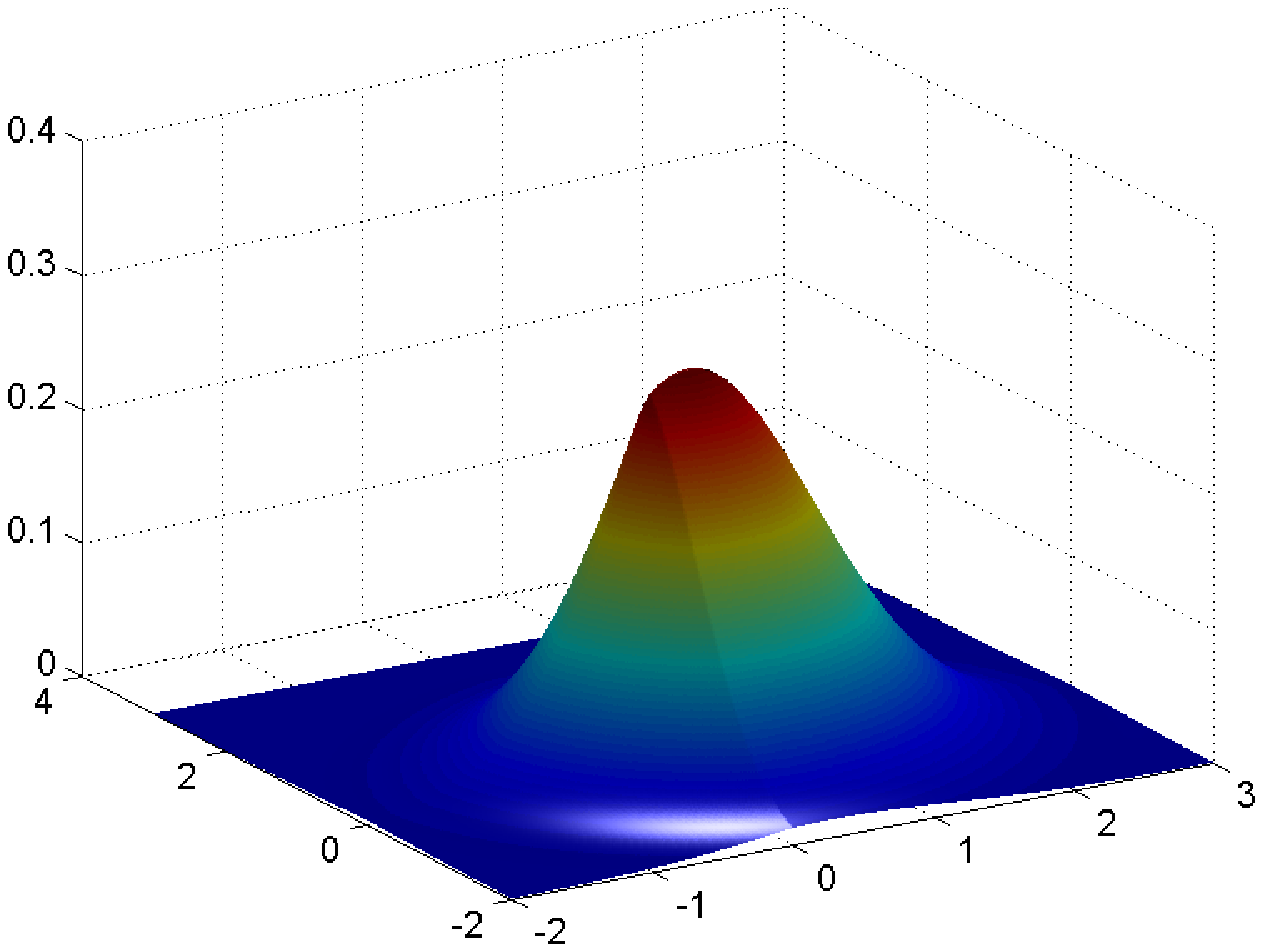}\\
	\includegraphics[width = 150pt]{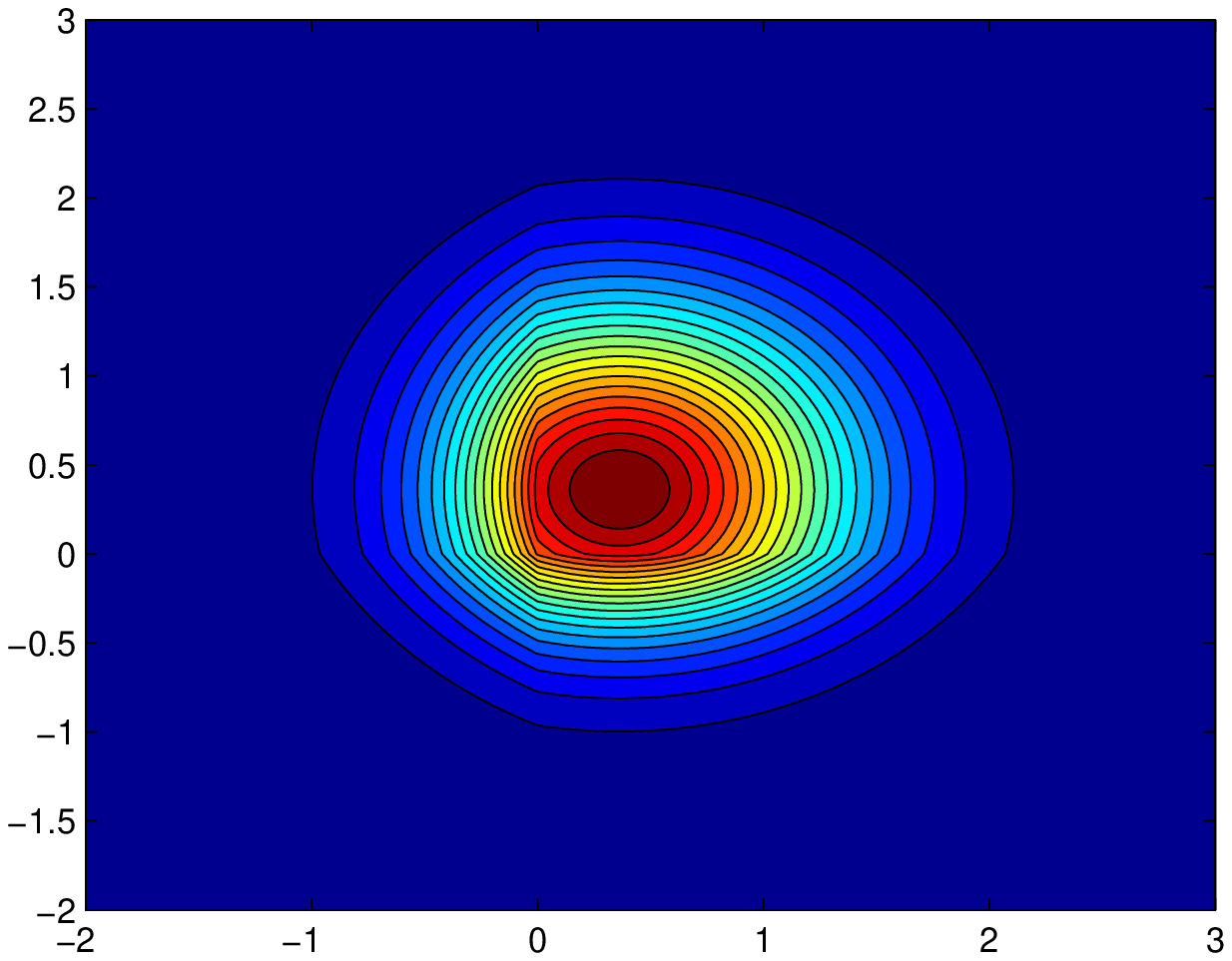}\ \ \includegraphics[width = 150pt]{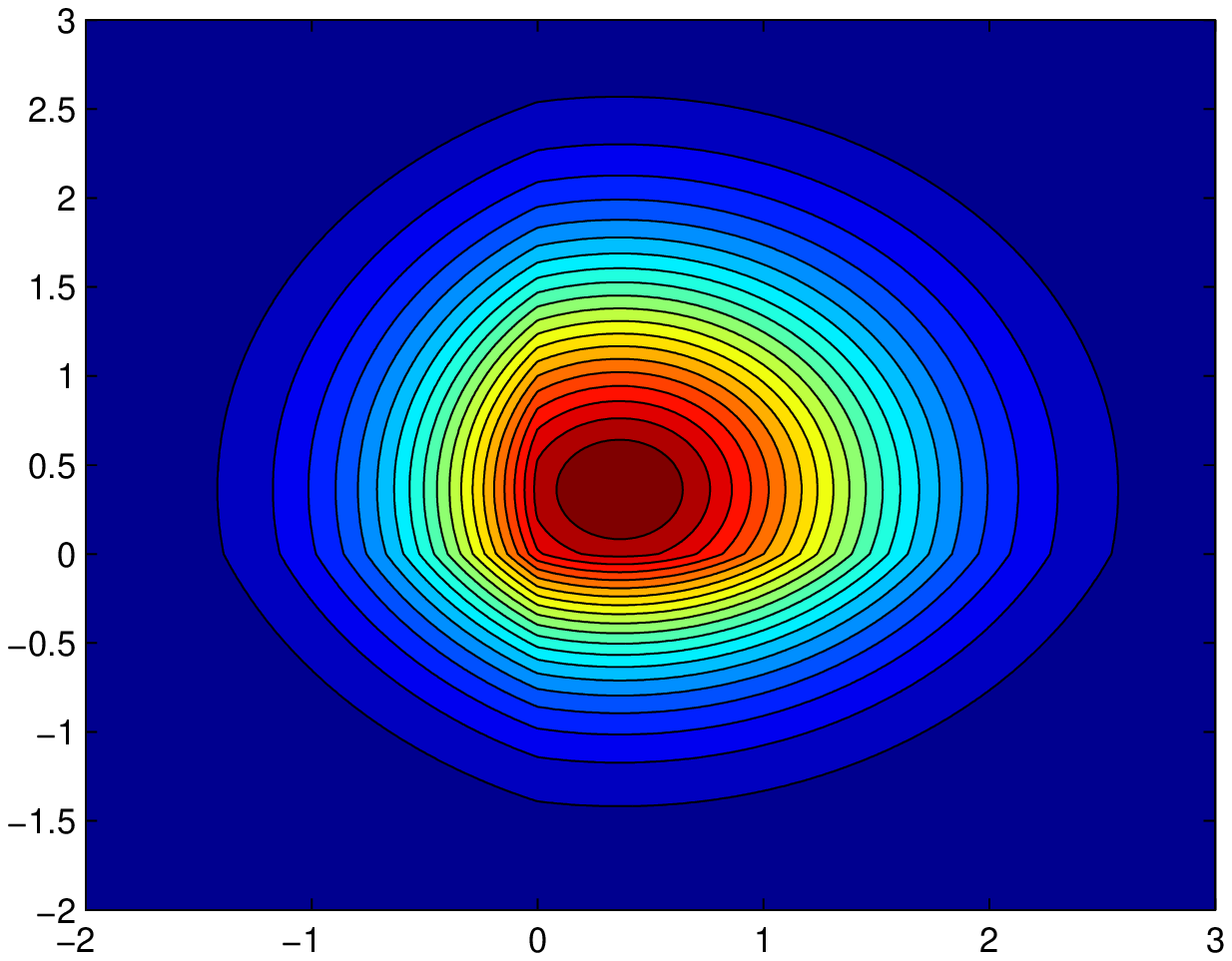}\\
    \hbox to \hsize{\hfill $\tau = 0.5$ \hfill $\tau=0.8$ \hfill}
	\caption{\textbf{Top:} the plots of the pseudo-posterior $\hat\pi_n$ with the Laplace prior for the temperature $\tau = 0.5$
    (left) and $\tau = 0.8$ (right). One can observe that decreasing the value of $\tau$ strengthens the peakedness of the density.
    \textbf{Bottom:} the level curves of the pseudo-posterior $\hat\pi_n$ with the Laplace prior for the temperature $\tau = 0.5$
    (left) and $\tau = 0.8$ (right). One clearly observes the non-differentiability of the density along the axes $\beta_1$ and $\beta_2$
    (caused by the non-differentiability of the $\ell_1$-norm).}
	\label{fig:a}
\end{figure}

Along with these studies, several authors have demonstrated the ability of the EWA to optimally estimate a sparse signal.
To this end, various types of priors have been used. For instance, \citep{Leung2006,RT11, Alquier3,arias-castro2014}
have employed discrete priors over the set of least-squares estimators with varying supports whereas \citep{DT08,DT12b}
have used Student-type heavy-tailed priors. In the context of structured sparsity, the EWA has been successfully used
in \citep{Alquier2,Alquier1,DIT}. Given the close relationship between the EWA and the Bayes estimator, it is worth
mentioning here that the problem of sparse estimation has also received much attention in the literature on Bayesian
Statistics \citep{WipfPR03,Park,Hans}. Posterior concentration properties for these methods have been investigated
in \citep{Castillo2, Castillo1, vdPas, Chao1}.

Despite these efforts, some natural questions remain open. One of them, described in details below,
is at the origin of this work. Let us consider the prediction error of a candidate
vector $\bbeta$ with respect to the quadratic loss
\begin{equation}\label{loss}
\ell_{n}(\bbeta,\bbetaz)=\frac{1}{n}\|\bfX(\bbeta-\bbetaz)\|^{2}_{2}=\frac1n\sum_{i=1}^n
(\bx_i^\top\bbeta-\bx_i^\top\bbetaz)^2.
\end{equation}
On the one hand,  theoretical studies of the lasso \citep{CandesTao,BRT,BCW2014,DHL14,BLT16,BDGP},
established\footnote{Provided that the Gram matrix $\bfX^\top\bfX/n$ satisfies suitable
assumptions (restricted isometry, restricted eigenvalues, compatibility, etc.).}
sharp upper bounds for the prediction risk of the PLS estimator \eqref{PLS}
for the $\ell_1$-penalty $\text{Pen}(\bbeta) = \|\bbeta\|_1$.

Therefore, one could expect the EWA with the Laplace prior
$\pi_0(\bbeta)\propto \exp(-\lambda\|\bbeta\|_1/\tau)$  to have a high prediction performance.
On the other hand, to the best of our knowledge, there is no result in the literature establishing accurate risk bounds for
the EWA with Laplace prior. Indeed, a straightforward application of the PAC-Bayesian type risk bounds \citep{PAC-Bayes}
for the EWA (such as, for instance, Theorem 1 in \citep{DT12b}) to the Laplace prior leads to strongly sub-optimal
remainder terms. This raises the following questions:
\vspace{-10pt}
\begin{description}
\item[Q1.] Is the EWA with the Laplace prior suitable for prediction under the sparsity scenario?
\item[Q2.] If it is, what is the range of temperature $\tau$ providing good prediction accuracy?
\item[Q3.] How do the statistical properties of the EWA with the Laplace prior compare with those of the lasso?
\end{description}

\vspace{-10pt}
Related questions are considered in \citep{Castillo1}. Indeed, for $\bbeta^\star=\mathbf 0_p$, $p=n$ and
$\bfX^\top\bfX/n =\bfI_n$,
Theorem 7 from \citep{Castillo1} establishes the following property. For all the reasonable choices\footnote{By ``reasonable''
we understand here the choice $\lambda = \text{Const}\,\sigma (\frac{\log p}{n})^{1/2}$, for which the lasso is provably
rate optimal under the sparsity scenario, provided that the design satisfies a version of the restricted eigenvalue condition.}
of the tuning parameter $\lambda$, if the temperature $\tau$ in the EWA with the Laplace prior is chosen as $\tau = \sigma^2/n$, then
the resulting posterior puts asymptotically no mass on the ball centred at $\bbetaz$ and of radius $\text{Const}({\log n}/{n})^{1/2}$,
the latter corresponding to the optimal rate of convergence in this model. This negative result, stated in terms of the posterior
contraction rate, can be easily adapted in order to show that, under the previous conditions, the Bayesian posterior mean is sub-optimal.

The present paper completes the picture by establishing some positive results. In particular, it turns out that
if the temperature parameter of the EWA with the Laplace prior is of the order $s\sigma^2/(pn)$, where $s$ is the sparsity of $\bbetaz$, then
the EWA with the Laplace prior does attain the optimal rate of convergence. Furthermore, it satisfies the same type of sharp sparsity
inequality as the lasso does. Interestingly, the proof of  this result is based on arguments which differ from those used in the aggregation
literature. Indeed, the two previously used techniques for getting oracle inequalities for the EWA and related procedures rely
either on the PAC-Bayesian inequality or on the Stein unbiased risk estimate. Instead, the key idea of our proof is to take advantage
of the following relations:
\begin{equation}
\int_{\R^p} \nabla \big(\beta_j^\alpha e^{-V_n(\bbeta)/\tau}\big)\,{\rm d}\bbeta = 0,\qquad j=1,\ldots,p,\quad \alpha = 0,1.
\end{equation}
Hence, most of our arguments are independent of the noise distribution and can be extended to other settings
(as opposed to the results relying on the Stein formula). Elaborating on this, we prove that the pseudo-posterior $\hat\pi_n$
puts an overwhelming weight on the set of vectors $\bbeta$ satisfying a sharp oracle inequality with rate-optimal remainder term.
In the case of the Gaussian noise, we also obtain the explicit form of the Stein unbiased estimator of the risk of $\bEWA$, which
can be used for choosing the tuning parameter. Finally, we extend these results to the model of trace regression when the underlying true
matrix $\bfB^\star$ has low rank.

The rest of the paper is organised as follows. The  notation used throughout the paper  is introduced in the next section.
\Cref{sec:risk}  analyses the prediction loss of the EWA with the Laplace prior,  and \Cref{sec:post}
gathers results characterising the concentration of the pseudo-posterior $\hat\pi_n$. Extensions of these results to the
case  where the unknown parameter is a (nearly) low-rank matrix are considered in \Cref{sec:matrix}. A brief summary of
the obtained results along with some conclusions is given in \Cref{sec:concl}. Finally, the proofs are postponed to \Cref{sec:proofs}.

\section{Notation}\label{sec:not}

This paragraph collects notation used throughout the paper. For every integer $k\ge 1$, we write $\mathbf 1_k$ (resp.\ $\mathbf 0_k$) for
the vector of $\R^k$ having all coordinates equal to one (resp.\ zero). We set $[k]=\{1,\ldots,k\}$. For every $q\in[0,\infty]$, we denote
by $\|\bu\|_q$ the usual $\ell_q$-norm of $\bu\in\R^k$, that is
$\|\bu\|_q =(\sum_{j\in [k]}|u_j|^q)^{1/q}$ when $0<q<\infty$, $\|\bu\|_0 = \text{Card}(\{j:u_j\not=0\})$ and $\|\bu\|_\infty =
\max_{j\in[k]} |u_j|$. For every integer $k\ge 1$ and any $T\subset [k]$, we denote by $T^c$ and
$|T|$ the complementary set $[p]\setminus T$ and the cardinality of $T$, respectively. For $\bu\in\R^k$ and $T\subset [k]$, we denote
$\bu_T\in\R^{|T|}$ the vector obtained from $\bu$ by removing all the coordinates belonging to the set $T^c$.

In Sections \ref{sec:risk} and \ref{sec:post},
we recall that $\bfX\in\R^{n\times p}$ refers to the deterministic design matrix with columns $\bx^1,\dots,\bx^p\in\R^n$ and rows
$\bx_1,\dots,\bx_n\in\R^p$. Finally, our analysis will involve the compatibility factor of the design matrix defined, for any
$J\subset [p]$ and $c>0$, by
\begin{equation}
\label{CF}
\kappa_{J,c}=\inf_{\bu\in\R^{p}:\|\bu_{J^{c}}\|_{1}<c\| \bu_{J}\|_{1}}\frac{c^{2}| J|
\| \mathbf X\bu\|^{2}_{2}}{n(c\|\bu_{J}\|_{1}-\|\bu_{J^{c}}\|_{1})^{2}}.
\end{equation}
Note that the compatibility factor, often used for the analysis of the lasso, is slightly larger\footnote{Since
this factor appears in the denominator of the risk bound, the larger is the better.} than the restricted
eigenvalue \citep{BRT}. For a better understanding of these (and related) quantities we refer the reader
to~\citep[Sections 3 and 4]{BRT} and \citep{VandeGeerConditionLasso09}.

Risk bounds established in the present work for the EWA contain a new term, as compared to the analogous
risk bounds for the lasso. This term reflects the peakedness of the pseudo-posterior density $\hat\pi_n$ and is defined by
\begin{equation}
\label{H}
H(\tau)=p\tau-\int G(\bu)\hat\pi_n(\bu){\rm d}\bu+G(\bEWA),
\end{equation}
where $G(\bu)=\nicefrac{1}{n}\|\bfX\bu\|^{2}_{2}+\lambda\|\bu\|_{1}$. When the temperature $\tau$ is low,
close to zero, the pseudo-posterior $\hat\pi_n$ is close to a Dirac measure centred at the lasso, which implies
that $H(\tau)$ is close to zero. Furthermore, since the above function $G$ is convex, we have the following
bound
\begin{equation}
H(\tau)\le p\tau.
\end{equation}
In \Cref{sec:risk} and \Cref{sec:post} we will occasionally use the following matrix notation.
For all integers $p\ge 1$, ${\bf I}_p$ refers to the identity matrix in $\R^{p\times p}$. For any integers $p\ge 1$ and $q\ge 1$, any matrix $\bfA\in \R^{p\times q}$ and any subset $T$ of $[q]$, we denote by $\bfA_T$ the matrix obtained from $\bfA$ by removing all the columns belonging
to $T^c$. Finally the transpose and the Moore-Penrose pseudoinverse of a matrix $\bfA$ are denoted by
$\bfA^\top$ and $\bfA^\dag$, respectively.


\section{Risk bound for the EWA with the Laplace prior}\label{sec:risk}

This section is devoted to discussing statistical properties of the EWA with the Laplace prior. Recall that it is defined
by \eqref{EWALP} as the average with respect to the pseudo-posterior density
\begin{equation}
\label{potential1}
\hat\pi_n(\bbeta)\propto \exp(-V_n(\bbeta)/\tau), \qquad
\text{where}\quad V_n(\bbeta)=\frac1{2n}\sum_{i=1}^n (y_i-\bx_i^\top\bbeta)^2 + \lambda\, \|\bbeta\|_1.
\end{equation}
The emphasis is put on non-asymptotic guarantees in terms of the prediction loss. It is important
to mention here that the Laplace prior, $\pi_0(\bbeta)\propto \exp(-\lambda\|\bbeta\|_1/\tau)$,
makes use of the same scale for all the coordinates of the vector $\bbeta$. This presumes that the
covariates (columns of the matrix $\bfX$) are already rescaled so that their Euclidean norms are
almost equal. An alternative approach (see, for instance, \cite{Bunea2007,BRT})---that we will not
follow here---would consist in replacing the $\ell_1$-norm of $\bbeta$ by the weighted $\ell_1$-norm
$\sum_{j\in[p]} \|\bx^j\| |\beta_j|$. The next result provides the main risk bound for the EWA.

\begin{theorem}
\label{OI1}
Assume that data are generated by model \eqref{model} with $\bxi$ drawn from the Gaussian distribution $\mathcal N(0,\sigma^2\bfI_n)$
and that the covariates are rescaled so that $\max_{j\in[p]}\nicefrac1n\|\bx^j\|_2^{2}\le 1$.
Suppose, in addition, that $\lambda\ge 2\sigma({\nicefrac2n\log(p/\delta)})^{1/2}$, for some
$\delta\in(0,1)$.  Then, with probability at least $1-\delta$,
\begin{equation}\label{SOI}
\ell_{n}(\bEWA,\betaz ) \le \inf_{\substack{\barbbeta\in\R^{p} \\ J\subset[p]}}
\bigg\{\ell_{n}(\barbbeta,\betaz)+4\lambda \|\barbbeta_{J^{c}}\|_{1}+
\frac{9\lambda^{2}| J|}{4\kappa_{J,3}}\bigg\}+2p\tau,
\end{equation}
where $\ell_{n}$ is defined in \eqref{loss} and $\bEWA$ is defined in \eqref{EWALP} and \eqref{potential1}.
\end{theorem}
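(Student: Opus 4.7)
The plan is to mimic the standard lasso argument, replacing the lasso's KKT stationarity condition by an averaged version obtained from integration by parts against the pseudo-posterior, and carrying along an extra slack term equal to $H(\tau)\le p\tau$ that will become the $2p\tau$ remainder in~\eqref{SOI}. The key algebraic tools are the two identities emphasised in the introduction. Since $e^{-V_n(\bbeta)/\tau}$ decays rapidly at infinity and is coordinate-wise absolutely continuous (the $\ell_1$-singularities at $\beta_j=0$ lie on a Lebesgue-null set), for every $j\in[p]$ one gets
\begin{equation*}
\int_{\R^p}\partial_{\beta_j} V_n(\bbeta)\,\hat\pi_n(\bbeta)\,d\bbeta = 0,\qquad \int_{\R^p}\beta_j\,\partial_{\beta_j} V_n(\bbeta)\,\hat\pi_n(\bbeta)\,d\bbeta = \tau.
\end{equation*}
Expanding the first produces the averaged stationarity condition $\tfrac{1}{n}\bfX^\top(\by-\bfX\bEWA)=\lambda\hat{\bs s}$ with $\hat s_j:=\esp_{\hat\pi_n}[\operatorname{sign}(\beta_j)]\in[-1,1]$, while summing the second over $j$ and invoking the definition of $H(\tau)$ yields the complementary-slackness identity $\lambda(\|\bEWA\|_1-\hat{\bs s}^\top\bEWA)=H(\tau)$.

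Next, the quadratic nature of the least-squares loss, combined with the stationarity condition, gives the exact Taylor-type identity
\begin{equation*}
\tfrac{1}{2n}\|\by-\bfX\barbbeta\|_2^2 = \tfrac{1}{2n}\|\by-\bfX\bEWA\|_2^2 - \lambda\hat{\bs s}^\top(\barbbeta-\bEWA) + \tfrac{1}{2}\ell_n(\bEWA,\barbbeta).
\end{equation*}
Substituting $\by=\bfX\bbetaz+\bxi$, subtracting $\tfrac{1}{2n}\|\bxi\|_2^2$ and rearranging yields the three-point relation
\begin{equation*}
\ell_n(\bEWA,\bbetaz) + \ell_n(\bEWA,\barbbeta) = \ell_n(\barbbeta,\bbetaz) + \tfrac{2}{n}\bxi^\top\bfX(\bEWA-\barbbeta) + 2\lambda\hat{\bs s}^\top(\barbbeta-\bEWA).
\end{equation*}
On the event $\mathcal E = \{\|\tfrac{1}{n}\bfX^\top\bxi\|_\infty\le\lambda/2\}$, which has probability at least $1-\delta$ by a Gaussian tail bound combined with $\max_j\|\bx^j\|_2^2/n\le 1$ and a union bound, the stochastic term is dominated by $\lambda\|\bu\|_1$ with $\bu:=\bEWA-\barbbeta$. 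Eliminating $2\lambda\hat{\bs s}^\top\bEWA$ via complementary slackness, using $\hat{\bs s}^\top\barbbeta\le\|\barbbeta\|_1$ and $H(\tau)\le p\tau$, and applying the standard lasso triangle inequality $\|\bu\|_1+2(\|\barbbeta\|_1-\|\bEWA\|_1)\le 3\|\bu_J\|_1-\|\bu_{J^c}\|_1+4\|\barbbeta_{J^c}\|_1$ gives
\begin{equation*}
\ell_n(\bEWA,\bbetaz)+\ell_n(\bEWA,\barbbeta)\le \ell_n(\barbbeta,\bbetaz)+\lambda\bigl(3\|\bu_J\|_1-\|\bu_{J^c}\|_1\bigr)+4\lambda\|\barbbeta_{J^c}\|_1+2p\tau.
\end{equation*}
A dichotomy on whether $\bu$ lies in the cone $\{\|\bu_{J^c}\|_1<3\|\bu_J\|_1\}$ closes the argument: outside the cone the linear term is non-positive and $\ell_n(\bEWA,\barbbeta)\ge 0$ may simply be dropped; inside it, the definition of $\kappa_{J,3}$ gives $3\|\bu_J\|_1-\|\bu_{J^c}\|_1\le 3\sqrt{|J|\,\ell_n(\bEWA,\barbbeta)/\kappa_{J,3}}$, and an AM-GM split absorbs $\ell_n(\bEWA,\barbbeta)$ into the left-hand side at the cost of the advertised term $\tfrac{9\lambda^2|J|}{4\kappa_{J,3}}$. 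Taking the infimum over $(\barbbeta,J)$ produces \eqref{SOI}.

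The most delicate step is justifying the two integration-by-parts identities in spite of the non-differentiability of $V_n$ along the hyperplanes $\{\beta_j=0\}$. The natural workaround is to replace $|x|$ by the smooth convex proxy $\sqrt{x^2+\varepsilon}$, perform the integration by parts on the resulting smooth pseudo-posterior (for which no regularity issues arise), and pass to the limit $\varepsilon\downarrow 0$: convexity yields monotone control of the partition function, while the quadratic part of $V_n$ supplies Gaussian-type envelopes that dominate all moments appearing in the identities.
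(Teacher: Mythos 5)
Your proof is correct and follows essentially the same route as the paper's: the two integration-by-parts identities you invoke are exactly those underlying \Cref{variancePosteriorBound}, your averaged stationarity condition and complementary-slackness identity $\lambda(\|\bEWA\|_1-\hat{\bs s}^\top\bEWA)=H(\tau)$, combined with the exact quadratic expansion around $\bEWA$, reproduce the paper's \Cref{propfond} verbatim (same $2H(\tau)\le 2p\tau$ slack, same negative term $-\ell_n(\bEWA,\barbbeta)$), and the cone/compatibility/AM--GM step and Gaussian union bound coincide with \Cref{lem:2} and \Cref{lemcor1}. The only cosmetic difference is that the paper packages the first step as an integrated strong-convexity inequality for $V_n$ rather than as an averaged KKT condition, and handles your cone dichotomy implicitly through the sign of $2ab-a^2\le b^2$.
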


For the lasso estimator, risk bounds of this nature have been developed in \citep{KLT11, SZ12, DHL14,BDGP}.
The risk bound in \eqref{SOI} extends the risk bounds available for the lasso (cf.\ Theorem 2 in \citep{DHL14})
to the EWA with the Laplace prior. Indeed, letting the temperature $\tau$ go to zero, the last term in the right-hand
side of \eqref{SOI} disappears and we retrieve the risk bound for the lasso. An attractive feature of risk bound
\eqref{SOI} is that the factor in front of the term $\ell_{n}(\barbbeta,\betaz )$ is equal to one; this is
often referred to as a sharp or exact oracle inequality. Furthermore, the other three terms in the right-hand side
of \eqref{SOI} are neat and have a simple interpretation. The second term, $4\lambda\|\barbbeta_{J^c}\|_1$, accounts
for the approximate sparsity; when $\bfX\bbetaz$ is well approximated by $\bfX\barbbeta$ with a $s$-sparse vector
$\barbbeta$, then choosing $J=\{j:\bar\beta_j\neq 0\}$ annihilates this term.
The third term of the risk bound corresponds to the optimal rate, up to a logarithmic factor, of estimation of a
vector $\bbetaz$ concentrated on the known set $J$. Indeed, if $|J| = s$ and the compatibility factor is bounded away
from zero, this term is of order $\nicefrac{s}{n}\log(p)$. Finally, the last term in the above risk bound, $2p\tau$,
reflects the influence of the temperature parameter $\tau$. In particular, it shows that if $\tau = \sigma^2/(pn)$
then this term is negligible with respect to the other remainder terms.

The inequality stated in \Cref{OI1} is a simplified version of the following one (proved in \Cref{sec:proofs}):
for any $\gamma>1$, in the event $\|\bfX^{\top}\bxi\|_\infty\leq n \lambda/\gamma$, it holds
\begin{equation}\label{SOI2}
\ell_{n}(\bEWA,\betaz ) \le \inf_{\substack{\barbbeta\in\R^{p} \\ J\subset[p]}}
\bigg\{\ell_{n}(\barbbeta,\betaz )+4\lambda \|\barbbeta_{J^{c}}\|_{1}+
\frac{\lambda^{2}(\gamma+1)^{2}| J|}{\gamma^{2}\kappa_{J,(\gamma+1)/(\gamma-1)}}\bigg\}+2H(\tau),
\end{equation}
where $H(\tau)$ is defined in \eqref{H}. On the one hand, one can use this more general result for
getting an oracle inequality under more general assumptions on the noise distribution such as
those considered, for instance, in \citep{Bunea2007,BCW2014}. On the other hand, one can infer
from \eqref{SOI2} that the term $H(\tau)$ highlights the difference, in terms of statistical
complexity, between the lasso and the EWA with the Laplace prior. It is therefore important to
get a precise evaluation of $H(\tau)$ as a function of $\tau$, $p$ and $n$, and to understand how
tight the inequality $H(\tau)\le p\tau$ is. To answer this question, we restrict our attention
to orthonormal designs and show the tightness of the aforementioned inequality. To this end,
let us introduce the scaled complementary error function $\Psi_v(t) =
e^{t^2/2v}\frac1{\sqrt{2\pi v}}\int_t^\infty e^{-u^2/2v}\,{\rm d}u$.

\begin{prop}
\label{prop:evalH}
Let $\hat\bfSigma_n = \nicefrac1n\bfX^\top\bfX$ be the Gram matrix and $\bbetaLS = \nicefrac1n\hat\bfSigma_n^{\dag}\bfX^\top\by$
be the least-squares estimator. Then, we have
\begin{align}
H(\tau)
    = \|\hat\bfSigma_n^{1/2}\bEWA\|_2^2+\lambda\|\bEWA\|_1-(\bEWA)^\top\hat\bfSigma_n\bbetaLS.
\end{align}
Furthermore, when the design is orthonormal, that is $\hat\bfSigma_n = \bfI_p$, then the EWA with the
Laplace prior is a thresholding estimator,
$\EWA_j= \sgn(\betaLS_j)\big(|\betaLS_j| -\lambda w(\tau,\lambda,|\betaLS_j|)\big)$, where
\begin{align}
w(\tau,\lambda,|\betaLS_j|)
    = \frac{\Psi_\tau(\lambda-|\betaLS_j|)-\Psi_\tau(\lambda+|\betaLS_j|)}
		{\Psi_\tau(\lambda-|\betaLS_j|)+\Psi_\tau(\lambda+|\betaLS_j|)},
\end{align}
and
\begin{equation}\label{H1}
H(\tau) = \sum_{j=1}^p \lambda \big(|\betaLS_j| -\lambda w(\tau,\lambda,|\betaLS_j|)\big)\big(1-w(\tau,\lambda,|\betaLS_j|)\big).
\end{equation}
\end{prop}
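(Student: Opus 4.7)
For the general identity, my plan is to exploit the integration-by-parts formula $\int_{\R^p}\partial_j\bigl(\beta_j e^{-V_n(\bbeta)/\tau}\bigr)\,d\bbeta = 0$ highlighted just before the proposition. Although $V_n$ is only absolutely continuous in each coordinate (not $C^1$, because of $\|\bbeta\|_1$), the identity will remain valid since $\beta_j e^{-V_n(\bbeta)/\tau}$ vanishes at infinity thanks to the quadratic growth of $V_n$. Differentiating and normalising the pseudo-posterior will yield $\tau=\int\beta_j\,\partial_j V_n(\bbeta)\,\hat\pi_n(\bbeta)\,d\bbeta$ for every $j$. Summing over $j$ and using the identity $\bbeta^\top\nabla V_n(\bbeta)=G(\bbeta)-\tfrac1n\,\bbeta^\top\bfX^\top\by$, I obtain
\begin{equation*}
p\tau = \int G(\bbeta)\,\hat\pi_n(\bbeta)\,d\bbeta - \tfrac{1}{n}(\bEWA)^\top\bfX^\top\by.
\end{equation*}
Plugging this into the definition \eqref{H} of $H(\tau)$ gives $H(\tau)=G(\bEWA)-\tfrac{1}{n}(\bEWA)^\top\bfX^\top\by$, which matches the claimed expression once one observes that $\bfX^\top\by$ lies in the range of $\hat\bfSigma_n$, so $\hat\bfSigma_n\bbetaLS=\tfrac{1}{n}\bfX^\top\by$.

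For the orthonormal case, I would complete the square in $V_n$ to rewrite it as $\tfrac12\|\bbeta-\bbetaLS\|_2^2+\lambda\|\bbeta\|_1$ up to an additive constant; then $\hat\pi_n$ factorises into independent one-dimensional densities proportional to $\exp\bigl(-\tfrac{1}{2\tau}((\beta_j-\betaLS_j)^2+2\lambda|\beta_j|)\bigr)$. On $\{\beta_j>0\}$ this is an unnormalised Gaussian with mean $\betaLS_j-\lambda$; on $\{\beta_j<0\}$ it is one with mean $\betaLS_j+\lambda$. The tail integrals $\int_t^\infty e^{-u^2/(2\tau)}\,du=\sqrt{2\pi\tau}\,\Psi_\tau(t)\,e^{-t^2/(2\tau)}$ together with $\int_0^\infty(x-\mu)e^{-(x-\mu)^2/(2\tau)}\,dx=\tau\,e^{-\mu^2/(2\tau)}$ will yield the normalising constant and the first moment in closed form. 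A common factor $e^{-\betaLS_j^2/(2\tau)}$ cancels in the ratio, producing the claimed thresholding formula for $\EWA_j$; odd symmetry in $\betaLS_j\mapsto-\betaLS_j$ absorbs the sign.

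Finally, specialising the identity from the first step to $\hat\bfSigma_n=\bfI_p$ gives $H(\tau)=\sum_{j=1}^p\EWA_j\bigl(\EWA_j+\lambda\,\sgn(\EWA_j)-\betaLS_j\bigr)$. Since the posterior places more mass on the side of zero containing $\betaLS_j$, one has $\sgn(\EWA_j)=\sgn(\betaLS_j)$, and the thresholding formula also gives $\EWA_j-\betaLS_j=-\lambda\,\sgn(\betaLS_j)\,w(\tau,\lambda,|\betaLS_j|)$; each summand then collapses to $\lambda(|\betaLS_j|-\lambda w)(1-w)$, which is exactly \eqref{H1}. The only delicate point in the argument will be the justification of the integration-by-parts identity on the measure-zero set $\{\beta_j=0\}$, which rests on absolute continuity rather than classical differentiability; everything else is careful bookkeeping with truncated Gaussian integrals expressed through $\Psi_\tau$.
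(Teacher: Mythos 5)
Your proposal is correct and follows essentially the same route as the paper's proof: the identity $\int \bbeta^\top\nabla V_n(\bbeta)\,\hat\pi_n(\bbeta)\,{\rm d}\bbeta = p\tau$ obtained by integration by parts yields the general formula for $H(\tau)$ (the paper works with the recentred potential $\bar V_n(\bu)=\frac12\|\hat\bfSigma_n^{1/2}(\bu-\bbetaLS)\|_2^2+\lambda\|\bu\|_1$, which differs from $V_n$ only by an additive constant, so the two computations coincide), and the orthonormal case is handled identically via factorisation of $\hat\pi_n$ into one-dimensional densities and truncated-Gaussian moments expressed through $\Psi_\tau$. The remaining bookkeeping, including the substitution of the thresholding formula into the expression for $H(\tau)$, matches the paper's argument.
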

The last expression of $H(\tau)$ provided by the proposition may be used for a numerical evaluation.
First, let us note that if we set $\bar\beta_j = \betaLS_j/\sqrt{\tau}$ and $\bar\lambda = \lambda/\sqrt{\tau}$,
the function $H(\tau)/\tau$ is independent of $\tau$. Indeed, we have $H(\tau)/\tau = \sum_j h(\bar\lambda,|\bar\beta_j|)$ where
$$
h(\bar\lambda, z) = \bar\lambda \big(z -\bar\lambda w(1,\bar\lambda,z)\big)\big(1-w(1,\bar\lambda,z)\big),\qquad \forall z>0.
$$
In Fig.~\ref{fig:1} below, we plot the curves of the functions $z\mapsto h(\bar\lambda,z)$ for different values of the parameter
$\bar\lambda$.
\begin{figure}[ht]
\includegraphics[height = 150pt]{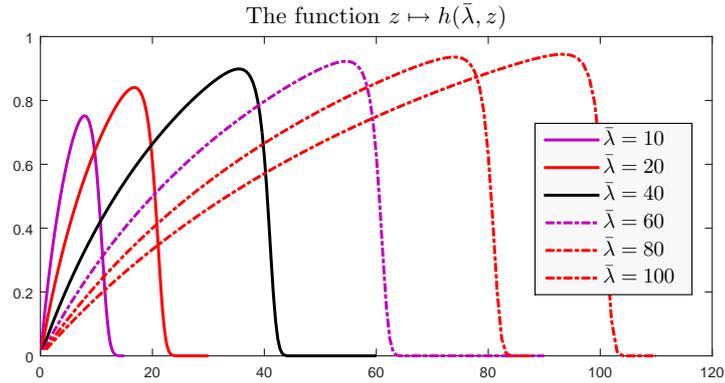}	
	\label{fig:1}
	\caption{For different values $\bar\lambda\in \{10,20,40,60,80,100\}$, we plot the
	function $z\mapsto h(\bar\lambda,z)$.}
\end{figure}

\noindent These curves clearly show that the bound $H(\tau)\le p\tau$, a consequence of $h(\bar\lambda,z)\le 1$, is tight.
Another interesting observation is that the function $H(\tau)$ is always nonnegative. This basically implies that the value
of $\tau$ minimising the right-hand side of \eqref{SOI2} is $\tau = 0$. In other terms, the lowest risk bound is obtained
for the lasso. This legitimately raises the following question: is there any advantage of using the EWA with the Laplace
prior as compared to the lasso? Our firm conviction is that there is an advantage, and will try to explain our viewpoint in the
rest of this section.

The point is that the lasso estimator is a nonsmooth function of the data. One of the consequences of this is that the
Stein unbiased risk estimate (SURE) for the lasso is a discontinuous function of data. Indeed, as proved in \citep{Tibsh12},
The SURE for the lasso (see also the earlier work \citep{Donoho95,Zou2007}) is given by
$$
\widehat R^{\rm lasso} (\lambda) = \frac1n \|\by - \bfX\hatbbeta{}^{\rm lasso}(\lambda)\|_2^2 - \sigma^2+\frac{2\sigma^2}{n}
\text{rank}(\bfX_{\mca(\lambda)}),
$$
where $\mca(\lambda) = \{j\in[p]: \hat\beta{}_j^{\rm lasso}(\lambda)\neq 0\}$ is the active set for the lasso
estimator with the tuning parameter $\lambda$. In theory, this quantity $\widehat R^{\rm lasso}(\lambda)$ can be used
for choosing the tuning parameter $\lambda$ of the lasso. However, in practice, this solution is rarely employed,
since $\mca(\lambda)$ has a very unstable behaviour as a function of $\lambda$ and $\by$. As a consequence,
not only one can get very different ``optimal'' values of $\lambda$ for two very close vectors $\by$ and $\by'$,
but is also likely to obtain  very different ``optimal'' values of $\lambda$ for the same  vector $\by$  if using
two different optimisation algorithms for computing an approximate solution to the lasso problem.

Using Stein's lemma, in the case where $\bxi$ is drawn from the Gaussian $\mathcal N(0,\sigma^2\bfI_n)$
distribution, one checks that
\begin{equation}
\hat R^{\rm EWA}(\lambda,\tau) =
\frac1n\| \by - \bfX \bEWA_{\lambda,\tau} \|_2^{2} - \frac{\sigma^2}{n}  + \frac{2\sigma^2}{n^2\tau} \int_{\RR^p}
\|\bfX(\bbeta-\bEWA_{\lambda,\tau})\|_2^2\,\hat\pi_{n,\lambda,\tau}(\bbeta)\,{\rm d}\bbeta
\label{riskEstim}
\end{equation}
is an unbiased estimator of the risk $\esp[\ell_n(\bEWA,\bbetaz)]$. Furthermore,  the function
$(\lambda,\tau)\mapsto \hat R^{\rm EWA}(\lambda,\tau)$ is clearly continuous on $(0,\infty)\times(0,\infty)$.
One can also check that the unbiased risk estimate $\hat R^{\rm EWA}(\lambda,\tau)$ depends continuously  on the data
vector $\by$. Therefore, this quantity us arguably  more robust to the variation in data and  more regular as a
function of the tuning parameters as compared to $\hat R^{\rm lasso}$. This implies that minimising
$\hat R^{\rm EWA}(\lambda,\tau)$ with respect to $\lambda$ or $\tau$ might be a good strategy for choosing
these parameters adaptively.

Of course, this requires to be able to numerically compute the right-hand side of \eqref{riskEstim} or,
equivalently, the mean and the covariance matrix of the pseudo-posterior distribution $\hat\pi_n$. For
smooth and strongly log-concave densities, the cost of such computations has been recently
assessed in \citep{Dal14,Durmus16}. The adaptation of the approaches developed therein to the pseudo-posterior
$\hat\pi_n$, which is neither smooth nor strongly log-concave (but can be approximated by such a function),
is an ongoing work.


\section{Pseudo-Posterior concentration}
\label{sec:post}

Since the EWA estimator has a Bayesian flavour, it is appealing to look at the concentration properties
of the pseudo-posterior distribution $\hat\pi_n$. This is particularly important in the light of the results
in~\cite{Castillo1} establishing that, for the temperature $\tau = \sigma^2/n$, the pseudo-posterior $\hat\pi_n$
with the Laplace prior puts asymptotically no mass on the set of vectors $\bbeta$ having a small prediction
error. Furthermore, this result is proven for the orthonormal design matrix $\bfX$, which, intuitively, is a
rather favourable situation for the Laplace prior.

The first property that we establish here and that characterises the concentration of the pseudo-posterior
around its average is the following upper bound on the variance of the prediction $\bfX\bbeta$ when $\bbeta$ is
drawn from $\hat\pi_n$. (Recall that the matrix $\bfX$ has $n$ rows, so the normalisation by multiplicative
factor $1/n$ is natural.)

\begin{prop}
\label{variancePosteriorBound}
If $\hat\pi_n(\bu)\propto \exp{(-V_n(\bu)/\tau)}$ is the pseudo-posterior with the Laplace prior defined
by \eqref{potential1}, then, for every $\barbbeta\in\R^p$,
we have
\begin{equation}\label{lemConcentration}
\int_{\R^p} V_n(\bu)\, \hat\pi_n(\bu)\, {\rm d}\bu \leq p \tau + V_n(\barbbeta) -
\frac{1}{2n} \int_{\mathbb{R}^{p}} \| \bfX (\bu - \barbbeta) \|_{2}^{2}\,\hat\pi_n(\bu)\, {\rm d}\bu.
\end{equation}
Furthermore, choosing $\barbbeta = \bEWA = \int_{\R^p} \bu\,\hat\pi_n(\bu)\,{\rm d}\bu$, we get
\begin{equation}
\label{varianceInequality}
\frac{1}{n} \int_{\R^p}\| \bfX (\bu - \bEWA ) \|_{2}^{2}\,\hat\pi_n(\bu)\, {\rm d}\bu \leq p\tau.
\end{equation}
\end{prop}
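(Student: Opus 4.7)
My plan is to derive \eqref{lemConcentration} from the strong convexity of the quadratic part of $V_n$ together with two integration-by-parts identities specific to densities of the form $e^{-V_n/\tau}$. Writing $V_n$ as the sum of a convex quadratic with Hessian $\bfX^\top\bfX/n$ and the convex function $\lambda\|\cdot\|_1$, the exact second-order Taylor expansion of the quadratic part and convexity of the $\ell_1$-norm give, for any subgradient $\bs v(\bbeta)\in\partial V_n(\bbeta)$,
\begin{equation*}
V_n(\bbeta)\le V_n(\barbbeta) + \bs v(\bbeta)^\top(\bbeta-\barbbeta) - \frac{1}{2n}\|\bfX(\bbeta-\barbbeta)\|_2^2.
\end{equation*}
Integrating against $\hat\pi_n$ reduces \eqref{lemConcentration} to proving the identity $\int\bs v(\bbeta)^\top(\bbeta-\barbbeta)\,\hat\pi_n(\bbeta)\,{\rm d}\bbeta = p\tau$, independent of $\barbbeta$.

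For that identity I exploit the exponential form of the pseudo-posterior. Because $e^{-V_n(\bbeta)/\tau}$ decays fast at infinity and the maps $\beta_j\mapsto e^{-V_n/\tau}$, $\beta_j\mapsto \beta_j e^{-V_n/\tau}$ are absolutely continuous in each coordinate with integrable partials, the fundamental theorem of calculus forces
\begin{equation*}
\int_{\R^p}\partial_j\bigl(e^{-V_n(\bbeta)/\tau}\bigr)\,{\rm d}\bbeta=0,\qquad \int_{\R^p}\partial_j\bigl(\beta_j e^{-V_n(\bbeta)/\tau}\bigr)\,{\rm d}\bbeta=0,\qquad j\in[p].
\end{equation*}
Expanding each derivative with a measurable selection from $\partial V_n$ (rigorously justified by smoothing $|\beta_j|$ as $\sqrt{\beta_j^2+\e^2}$ and letting $\e\downarrow 0$) and dividing by the normalising constant of $\hat\pi_n$ gives $\int(\bs v)_j\hat\pi_n\,{\rm d}\bbeta=0$ and $\int\beta_j(\bs v)_j\hat\pi_n\,{\rm d}\bbeta=\tau$. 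Summing in $j$ yields $\int\bs v\,\hat\pi_n\,{\rm d}\bbeta=\bzero$ and $\int\bs v^\top\bbeta\,\hat\pi_n\,{\rm d}\bbeta=p\tau$, so the cross term equals $p\tau$ and \eqref{lemConcentration} follows.

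To obtain \eqref{varianceInequality} I cannot simply apply Jensen to $V_n$ in \eqref{lemConcentration} with $\barbbeta=\bEWA$, which would only deliver the suboptimal bound $\frac{1}{n}\int\|\bfX(\bbeta-\bEWA)\|_2^2\,\hat\pi_n\,{\rm d}\bbeta \le 2p\tau$. Instead, I expand the quadratic part of $V_n$ exactly around $\bEWA$; since $\int(\bbeta-\bEWA)\,\hat\pi_n\,{\rm d}\bbeta=\bzero$, the linear cross term vanishes and
\begin{equation*}
\int V_n\,\hat\pi_n\,{\rm d}\bbeta = V_n(\bEWA) + \frac{1}{2n}\int\|\bfX(\bbeta-\bEWA)\|_2^2\,\hat\pi_n\,{\rm d}\bbeta + \lambda\Bigl(\int\|\bbeta\|_1\,\hat\pi_n\,{\rm d}\bbeta - \|\bEWA\|_1\Bigr).
\end{equation*}
Plugging this into \eqref{lemConcentration} at $\barbbeta=\bEWA$, the $V_n(\bEWA)$ terms cancel, the two copies of $\frac{1}{2n}\int\|\bfX(\bbeta-\bEWA)\|_2^2\,\hat\pi_n$ combine on the left-hand side, and dropping the nonnegative Jensen gap $\lambda(\int\|\bbeta\|_1\hat\pi_n\,{\rm d}\bbeta-\|\bEWA\|_1)$ delivers \eqref{varianceInequality}. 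The main technical hurdle is the rigorous handling of the non-smoothness of $\|\cdot\|_1$ on the axes $\{\beta_j=0\}$; I resolve it by the smoothing-and-limit argument above, where dominated convergence is controlled by the exponential decay of $e^{-V_n/\tau}$.
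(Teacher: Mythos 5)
Your proof is correct and follows essentially the same route as the paper: the strong-convexity inequality for $V_n$ combined with the two integration-by-parts identities $\int \nabla V_n\,\hat\pi_n = \mathbf 0_p$ and $\int \bu^\top\nabla V_n\,\hat\pi_n = p\tau$ gives \eqref{lemConcentration}, and your refined second step (exact expansion of the quadratic around $\bEWA$ plus Jensen on the $\ell_1$ part alone) is just an explicit rendering of the paper's use of Jensen on the convex function $W_n(\bu)=V_n(\bu)-\nicefrac{1}{2n}\|\bfX(\bu-\bEWA)\|_2^2$, whose quadratic terms cancel. You correctly identified that naive Jensen on $V_n$ only yields $2p\tau$, which is exactly the point of the paper's argument as well.
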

The proof of this result is rather simple and plays an important role in the proof of the oracle inequality stated in \Cref{OI1}.
For these reasons, we opted for presenting this proof in this section, instead of postponing it to \Cref{sec:proofs}.

\begin{proof} The convexity of the function $\barbbeta \mapsto \| \barbbeta \|_1$ readily implies
that the function $\barbbeta\mapsto W_n(\barbbeta) = V_n(\barbbeta)- \nicefrac{1}{2n} \| \bfX(\bu - \barbbeta) \|_{2}^{2}$ is a convex
function, for every fixed $\bu\in\R^p$. Furthermore, we have $W_n(\bu) = V_n(\bu)$ and $\nabla W_n(\bu) = \nabla V_n(\bu)$ at
any point $\bu$ of differentiability of $V_n$. Therefore,
\begin{equation}\label{eq:1}
V_n \big(\barbbeta \big) \geq V_n(\bu) + \big(\barbbeta - \bu \big)^{\top}\nabla V_n(\bu) + \frac{1}{2n} \big\| \bfX(\bu-\barbbeta) \big\|_{2}^{2},
\end{equation}
for all $\barbbeta \in \R^p$ and for almost all $\bu\in\R^p$ (those for which $V_n$ is continuously differentiable at $\bu$).
Using the fundamental theorem of calculus, we remark that
\begin{equation}
\label{lemConcentrationRk1}
\int_{\mathbb{R}^{p}} \nabla V_n(\bu)\, \hat\pi(\bu)\, {\rm d}\bu  = -\tau \int_{\mathbb{R}^{p}}[\nabla \hat\pi_n(\bu)]\, {\rm d}\bu = \mathbf 0_p
\end{equation}
and that
\begin{align}
\int_{\R^{p}}
\bu^{\top} \nabla V_n(\bu)\, \hat\pi(\bu)\, {\rm d}\bu - p\tau &= \int_{\R^{p}} \sum_{j=1}^{p} \bigg( \beta_j \frac{\partial V_n}{\partial \beta_j}(\bu) -\tau\bigg)  \,\hat\pi(\bu)\, {\rm d}\bu \\
&= -\tau \int_{\R^{p}} \sum_{j=1}^{p} \frac{\partial[ u_j \hat\pi_n(\bu)]}{\partial u_j}\, {\rm d}\bu = 0. \label{lemConcentrationRk2}
\end{align}
Integrating inequality \eqref{eq:1} on $\R^p$ with respect to the density $\hat\pi_n$ and using relations \eqref{lemConcentrationRk1} and \eqref{lemConcentrationRk2}, we arrive at
\begin{equation}\label{eq:2}
V_n \big(\barbbeta \big) \geq \int_{\R^{p}} V_n(\bu)\, \hat\pi_n(\bu)\, {\rm d}\bu - p\tau + \frac{1}{2n} \int_{\R^{p}}
\big\|  \bfX \big(\bu - \barbbeta \big) \big\|_{2}^{2}\,\hat\pi_n(\bu)\, {\rm d}\bu.
\end{equation}
This completes the proof of the first claim of the proposition.

To prove the second claim, we replace $\barbbeta$ by $\bEWA$ in \eqref{eq:2}. After rearranging the terms, this yields
\begin{equation}\label{eq:3}
\frac{1}{2n} \int_{\R^{p}} \big\|  \bfX \big(\bu -\bEWA \big) \big\|_{2}^{2}\,\hat\pi_n(\bu)\, {\rm d}\bu \le
p\tau + V_n \big(\bEWA\big) -\int_{\R^{p}} V_n(\bu)\, \hat\pi_n(\bu)\, {\rm d}\bu.
\end{equation}
Using once again the fact that $\bu\mapsto W_n(\bu) = V_n(\bu)- \nicefrac{1}{2n} \| \bfX(\bu - \bEWA) \|_{2}^{2}$ is a convex function,
we obtain $V_n \big(\bEWA\big) = W_n \big(\bEWA\big) \le \int W_n(\bu)\,\hat\pi_n(\bu)\,{\rm d}\bu$, which is equivalent to
\begin{align}
V_n \big(\bEWA\big) -\int_{\R^{p}} V_n(\bu)\, \hat\pi_n(\bu)\,{\rm d}\bu
&\le - \frac{1}{2n} \int_{\R^{p}} \big\|  \bfX \big(\bu -\bEWA \big) \big\|_{2}^{2}\,\hat\pi_n(\bu)\,{\rm d}\bu.\label{eq:4}
\end{align}
This inequality, combined with \eqref{eq:3}, completes the proof of \eqref{varianceInequality} and of the proposition.
\end{proof}

\begin{rem}
A careful inspection of the proof reveals that the claims of the proposition are independent of the precise form of the $\ell_1$-penalty.
Therefore, the proposition still holds if we replace the $\ell_1$-norm by any convex penalty.
\end{rem}

The second claim of the proposition establishes that the dispersion of the distribution $\hat\pi_n$ around its average value
$\bEWA$ is of the order $(p\tau)^{1/2}$. Interestingly, we show below that the same order of magnitude appears when we determine a
region of concentration for the pseudo-posterior $\hat\pi_n$. A key argument in the proof of the latter claim is the following result.

\begin{prop}[\cite{Bobkov11}, Theorem~1.1]
\label{propBobkov}
Let $\hat\pi_n(\bu)\propto\exp(-V_n(\bu)/\tau)$ be a log-concave probability density\/\footnote{This means that $V_n$ is a
convex function.} and let $\bbeta$ be a random vector drawn from $\hat\pi_n$. Then, for any $t>0$, the inequality
\begin{equation}
\label{eqBobkov}
 V_n(\bbeta) \leq  \int_{\R^{p}}V_n(\bu)\,\hat\pi_n(\bu)\, {\rm d}\bu +  \tau \sqrt{p}\, t
\end{equation}
holds with probability at least $1-2e^{-t/16}$.
\end{prop}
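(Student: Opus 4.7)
My plan is to recast inequality \eqref{eqBobkov} as a subexponential concentration bound for the information content of a log-concave random vector. Write $W(\bu) = V_n(\bu)/\tau + \log Z_n$ with $Z_n = \int_{\R^p} e^{-V_n(\bu)/\tau}\,{\rm d}\bu$, so that $\hat\pi_n(\bu) = e^{-W(\bu)}$ is a log-concave probability density on $\R^p$ and $W$ is convex. The inequality is then equivalent to
\begin{equation*}
\prob\bigl( W(\bbeta) - \esp[W(\bbeta)] \geq \sqrt{p}\, t\bigr) \leq 2 e^{-t/16}, \qquad t > 0,
\end{equation*}
and my approach relies on two ingredients: a variance bound $\text{Var}_{\hat\pi_n}(W) \leq p$, and a one-dimensional log-concavity argument that promotes this variance bound to a subexponential tail.

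For the variance bound, I would argue by induction on the dimension $p$. The base case $p=1$ is a direct computation: for a log-concave density $e^{-w}$ on $\R$, integration by parts together with the standard exponential tail for log-concave laws on $\R$ yields $\text{Var}(w(X)) \leq 1$. For the induction step, conditioning $\hat\pi_n$ on the last coordinate $\beta_p$ produces a log-concave conditional density on $\R^{p-1}$ (by Pr\'ekopa's theorem), while the marginal of $\hat\pi_n$ in $\beta_p$ is log-concave on $\R$. The law of total variance
\begin{equation*}
\text{Var}(W(\bbeta)) = \esp\bigl[\text{Var}(W(\bbeta)\mid \beta_p)\bigr] + \text{Var}\bigl(\esp[W(\bbeta)\mid \beta_p]\bigr)
\end{equation*}
then bounds the first summand by $p-1$ via the induction hypothesis applied to the conditional density; the second summand is handled by a differentiation-under-the-integral argument that identifies $\esp[W(\bbeta)\mid \beta_p]$, up to an additive constant, with the information content of the marginal in $\beta_p$, to which the base case applies.

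To convert the variance estimate into an exponential tail I would show that the law of the real random variable $W(\bbeta)$ has a log-concave density on $\R$. This follows from Pr\'ekopa--Leindler applied to the jointly log-concave function $(s,\bu) \mapsto e^{-W(\bu)}\mathbf{1}\{W(\bu) \leq s\}$, whose support $\{W(\bu) \leq s\}$ is convex because $W$ is convex; its marginal in $s$ is, up to differentiation, the density of $W(\bbeta)$. Combined with $\text{Var}(W(\bbeta)) \leq p$, Borell's lemma for one-dimensional log-concave distributions yields $\prob(|W(\bbeta) - \esp[W(\bbeta)]| \geq s) \leq 2 e^{-c s/\sqrt{p}}$ for a universal $c>0$, and a careful tracking of constants identifies $c$ as $1/16$ (or a bit better), giving the stated bound.

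The main obstacle is the variance estimate. Neither the Brascamp--Lieb covariance inequality, which only gives $\text{Var}_{\hat\pi_n}(W) \leq \esp[\langle \nabla W, (\nabla^2 W)^{-1}\nabla W\rangle]$, nor the direct integration-by-parts identity $\esp[|\nabla W|^2] = \esp[\Delta W]$, delivers $\leq p$ without additional structural input; the inductive decomposition sketched above is the key device. The delicate point in that induction is controlling $\partial_{\beta_p}\esp[W(\bbeta)\mid \beta_p]$ in terms of the log-concave marginal density in $\beta_p$, which genuinely uses the convexity of $W$. By comparison, the one-dimensional log-concavity step and the application of Borell's lemma are technically involved only in the accounting of numerical constants.
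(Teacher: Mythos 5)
You should first note that the paper contains no proof of this statement: \Cref{propBobkov} is quoted verbatim from \cite{Bobkov11} (Theorem~1.1), so the only meaningful comparison is with the argument in that reference. Your reduction of \eqref{eqBobkov} to a deviation bound for the information content $W(\bbeta)=V_n(\bbeta)/\tau+\log Z_n$ is correct, but the two pillars of your plan both have genuine gaps. The more serious one is the final step: the law of $W(\bbeta)$ is \emph{not} log-concave in general, so there is no ``Borell's lemma'' that upgrades $\mathrm{Var}(W(\bbeta))\le p$ to an exponential tail. Take $\hat\pi_n$ standard Gaussian on $\R$, so $W(u)=u^2/2+\tfrac12\log(2\pi)$; then $W(\bbeta)$ is (a shift of) a Gamma variable with shape $1/2$, whose density $\propto s^{-1/2}e^{-s}$ is not log-concave. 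Your Pr\'ekopa--Leindler argument applied to $(s,\bu)\mapsto e^{-W(\bu)}\mathbf 1\{W(\bu)\le s\}$ is sound as far as it goes, but its marginal in $s$ is the \emph{distribution function} $\prob(W(\bbeta)\le s)$, and log-concavity of a CDF neither passes to its derivative nor controls the upper tail $\prob(W(\bbeta)\ge \esp W+\sqrt p\,t)$, which is the tail the proposition needs. Since a variance bound alone can never produce exponential tails, this is where the proof genuinely breaks.

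The variance bound itself (the sharp varentropy bound $\mathrm{Var}(W(\bbeta))\le p$) is true but is a nontrivial theorem, and your tensorization does not close. Writing the conditional density of $(\beta_1,\dots,\beta_{p-1})$ given $\beta_p=s$ as $e^{-W(\bu',s)}/m(s)$, one has $\esp[W(\bbeta)\mid\beta_p=s]=h(s)-\log m(s)$, where $h(s)$ is the differential entropy of the conditional law. This is \emph{not} ``up to an additive constant the information content of the marginal'': $h(s)$ varies with $s$ for non-product log-concave densities, so the second term in the law of total variance is not covered by the one-dimensional base case, and no differentiation-under-the-integral identity removes it. The route actually taken in \cite{Bobkov11} bypasses the variance entirely and works directly with exponential moments: for a log-concave density $g$ on $\R^p$ one has $\esp[g(X)^{-u}]=\int g^{1-u}\le (1-u)^{-p}(\sup g)^{-u}$ for $u\in[0,1)$, and since $-\log\sup g\le \esp[-\log g(X)]$ this gives $\esp[e^{u(W(\bbeta)-\esp W)}]\le(1-u)^{-p}$; a Chernoff optimisation (and a companion bound for $u<0$) then yields \eqref{eqBobkov} with the constant $1/16$, and the bound $\mathrm{Var}(W(\bbeta))\le Cp$ falls out as a corollary rather than serving as an input. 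If you want to salvage your outline, you would need to replace both the inductive variance argument and the log-concavity claim for the law of $W(\bbeta)$ by this moment-generating-function computation.
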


Using this proposition, we establish the following result (the proof of which is postponed to \Cref{sec:proofs})
characterising the concentration of $\hat\pi_n$.

\begin{theorem}[Posterior concentration bound]
\label{theoPosteriorConcentration}
Assume that data are generated by model \eqref{model} with  $\bxi\sim\mathcal N(0,\sigma^2\bfI_n)$
and rescaled covariates, \textit{i.e.}, $\max_{j\in[p]}\nicefrac1n\|\bx^j\|_2^{2}\le 1$. Let the quality of an
estimator be measured by the squared prediction loss \eqref{loss}. Assume that the tuning parameter
$\lambda$ satisfies $\lambda\ge 2\sigma \big(\nicefrac{2}{n}\log(\nicefrac{p}{\delta})\big){}^{1/2}$, for some
$\delta\in(0,1)$.  Then, with probability at least $1-\delta$, the pseudo-posterior $\hat\pi_n$ with the Laplace
prior defined by {\rm (\ref{potential1})} satisfies
\begin{equation}\label{eqThmConcentrationBound}
\hat\pi_n\bigg(\bbeta:
\ell_{n}(\bbeta,\betaz ) \le \inf_{\substack{\barbbeta\in\R^{p} \\ J\subset[p]}}
\bigg\{\ell_{n}(\barbbeta,\betaz )+4\lambda \|\barbbeta_{J^{c}}\|_{1}+
\frac{9\lambda^{2}| J|}{2\kappa_{J,3}}\bigg\} + 8p\tau \bigg)\ge 1-
2e^{-\sqrt{p}/16}.
\end{equation}
\end{theorem}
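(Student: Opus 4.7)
The proof I propose combines three ingredients: Bobkov's concentration for $V_n$ (Prop \ref{propBobkov}), the variance-type inequality \eqref{lemConcentration} of Prop \ref{variancePosteriorBound}, and the compatibility-factor machinery already used in the proof of Theorem \ref{OI1}. My first step would be to apply Prop \ref{propBobkov} with $t=\sqrt{p}$ to produce a set $A\subset\R^p$ of pseudo-posterior mass at least $1-2e^{-\sqrt{p}/16}$ on which $V_n(\bbeta) \leq \int V_n\,\hat\pi_n + p\tau$. Coupling this with \eqref{lemConcentration} applied to a free oracle $\bar\bbeta\in\R^p$ then yields, for every $\bbeta\in A$,
\[
V_n(\bbeta) + \tfrac{1}{2n}\int\|\bfX(\bu-\bar\bbeta)\|_2^2\,\hat\pi_n(\bu)\,{\rm d}\bu \leq V_n(\bar\bbeta) + 2p\tau.
\]
The factor $2p\tau$ (rather than the $p\tau$ appearing in the proof of Theorem \ref{OI1}) is the price of controlling a typical draw $\bbeta$ instead of the pseudo-posterior mean $\bEWA$.

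Next, mimicking the proof of Theorem \ref{OI1}, I would expand $V_n$ using $\by=\bfX\betaz+\bxi$, work on the event $B=\{\|\bfX^\top\bxi\|_\infty\le n\lambda/2\}$ (which has $\bxi$-probability $\geq 1-\delta$ by a standard Gaussian union bound under the hypothesis on $\lambda$), use the duality bound $\frac{1}{n}|\bxi^\top\bfX(\bbeta-\bar\bbeta)|\le \frac{\lambda}{2}\|\bbeta-\bar\bbeta\|_1$, and split $\bbeta-\bar\bbeta$ over $J$ and $J^c$ to obtain, after multiplying by $2$,
\[
\ell_n(\bbeta,\betaz) + \tfrac{1}{n}\int\|\bfX(\bu-\bar\bbeta)\|_2^2\,\hat\pi_n \le \ell_n(\bar\bbeta,\betaz) + 3\lambda\|(\bbeta-\bar\bbeta)_J\|_1 - \lambda\|(\bbeta-\bar\bbeta)_{J^c}\|_1 + 4\lambda\|\bar\bbeta_{J^c}\|_1 + 4p\tau.
\]
Outside the cone $\{\bu:\|\bu_{J^c}\|_1\le 3\|\bu_J\|_1\}$ the desired bound is immediate; inside the cone, \eqref{CF} together with Young's inequality $2uv\le u^2+v^2$ gives $3\lambda\|(\bbeta-\bar\bbeta)_J\|_1-\lambda\|(\bbeta-\bar\bbeta)_{J^c}\|_1\le \frac{9\lambda^2|J|}{2\kappa_{J,3}} + \frac{1}{2n}\|\bfX(\bbeta-\bar\bbeta)\|_2^2$, which already matches the constant $\frac{9\lambda^2|J|}{2\kappa_{J,3}}$ announced in the theorem.

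The main obstacle is absorbing the extra term $\frac{1}{2n}\|\bfX(\bbeta-\bar\bbeta)\|_2^2$. Here I would use the Pythagoras-like identity $\int\|\bfX(\bu-\bar\bbeta)\|_2^2\hat\pi_n = \|\bfX(\bEWA-\bar\bbeta)\|_2^2 + \int\|\bfX(\bu-\bEWA)\|_2^2\hat\pi_n$ (consequence of $\int(\bu-\bEWA)\hat\pi_n = \mathbf{0}_p$) together with the triangle-type bound $\|\bfX(\bbeta-\bar\bbeta)\|_2^2\le 2\|\bfX(\bbeta-\bEWA)\|_2^2 + 2\|\bfX(\bEWA-\bar\bbeta)\|_2^2$. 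After cancelling the $\bar\bbeta$-dependent contributions against the integral on the left, the problem reduces to bounding $\|\bfX(\bbeta-\bEWA)\|_2^2/n$ by $4p\tau$ on $A$. This last step is the technical crux: it follows from specialising Step~1 to $\bar\bbeta=\bEWA$ (which yields $V_n(\bbeta)-V_n(\bEWA)\le 2p\tau$) and exploiting the strong convexity of the quadratic part of $V_n$ in the seminorm $\|\bfX\,\cdot\,\|_2/\sqrt{n}$; the non-smooth $\ell_1$-contribution is neutralised by selecting the subgradient of $\|\cdot\|_1$ at $\bEWA$ that best offsets the pseudo-gradient relation $\int \nabla V_n(\bu)\,\hat\pi_n(\bu)\,{\rm d}\bu = \mathbf{0}_p$ of \eqref{lemConcentrationRk1}. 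Adding the resulting $4p\tau$ to the $4p\tau$ produced in the earlier steps gives the total slack of $8p\tau$ claimed by the theorem.
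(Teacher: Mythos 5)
Your architecture coincides with the paper's: Bobkov's inequality with $t=\sqrt p$ defines the high-$\hat\pi_n$-mass set, the fundamental inequality \eqref{lemConcentration} supplies the $2p\tau$ slack for a typical draw, and the compatibility-factor computation with $\gamma=2$ (equivalently, \Cref{lem:2} applied with $\bfX/\sqrt2$ in place of $\bfX$) produces the constant $9\lambda^{2}|J|/(2\kappa_{J,3})$; your bookkeeping up to and including the Young-inequality step is correct. The gap is in your ``technical crux''. You reduce matters to showing $\frac1n\|\bfX(\bbeta-\bEWA)\|_2^2\le 4p\tau$ on the Bobkov set and propose to derive it from $V_n(\bbeta)-V_n(\bEWA)\le 2p\tau$ plus strong convexity, neutralising the first-order term by a suitable choice of subgradient at $\bEWA$. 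This step fails: strong convexity gives $\frac1{2n}\|\bfX(\bbeta-\bEWA)\|_2^2\le V_n(\bbeta)-V_n(\bEWA)-\langle g,\bbeta-\bEWA\rangle$ for $g\in\partial V_n(\bEWA)$, so you would need $\langle g,\bbeta-\bEWA\rangle\ge 0$. But $\bEWA$ is not a minimiser of $V_n$ (that is the lasso), so $\mathbf 0_p\notin\partial V_n(\bEWA)$ in general; moreover $\bEWA$ generically has no zero coordinate, hence $\partial V_n(\bEWA)$ is a singleton and there is no subgradient to ``select''. The relation $\int\nabla V_n(\bu)\,\hat\pi_n(\bu)\,{\rm d}\bu=\mathbf 0_p$ concerns the posterior average of the gradient, not the gradient at the posterior mean---these differ exactly because $\nabla V_n$ is nonlinear---so it cannot be used to kill $\langle g,\bbeta-\bEWA\rangle$, which can well be negative for points $\bbeta$ of the Bobkov set.

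The bound you need is nevertheless true and follows from material you already have: specialising your Step~1 to $\barbbeta=\bbeta$ gives $\frac1n\int\|\bfX(\bu-\bbeta)\|_2^2\,\hat\pi_n(\bu)\,{\rm d}\bu\le 4p\tau$, and Jensen's inequality then yields $\|\bfX(\bbeta-\bEWA)\|_2^2=\big\|\int\bfX(\bbeta-\bu)\,\hat\pi_n(\bu)\,{\rm d}\bu\big\|_2^2\le\int\|\bfX(\bbeta-\bu)\|_2^2\,\hat\pi_n(\bu)\,{\rm d}\bu$. The paper avoids the detour through $\bEWA$ altogether by using $\|\bfX(\bu-\barbbeta)\|_2^2\ge\frac12\|\bfX(\bbeta-\barbbeta)\|_2^2-\|\bfX(\bu-\bbeta)\|_2^2$ under the integral and invoking the same $4p\tau$ bound. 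Either repair preserves your constants and gives the announced total slack of $8p\tau$.
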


The latter theorem, in conjunction with \Cref{OI1}, tells us that if we generate a random vector $\bbeta$
distributed according to the density $\hat\pi_n$, then with high probability it will have a prediction loss
almost as small as the one of the EWA, the average with respect to $\hat\pi_n$.
This remark might be attractive from the computational point of view, since, at least for some distributions,
drawing a random sample is easier than computing the expectation. Note also that by increasing the factor in
front of the term $p\tau$ it is possible to make the $\hat\pi_n$-probability of the event considered
in \Cref{theoPosteriorConcentration} even closer to one.

\section{Sparsity oracle inequality in the matrix case}\label{sec:matrix}

In this section, we extend the results of the previous sections to the problem of matrix regression with a
low-rankness inducing prior. We first need to introduce additional notations used throughout this section.

\subsection{Specific notation}
For two matrices $\bfA$ and $\bfB$ of the same dimension, the scalar product
is defined by
\begin{equation}
\langle \bfA, \bfB \rangle = \tr(\bfA\!^{\top}\,\bfB).
\end{equation}
The nuclear norm of a $p\times q$ matrix $\bfA$ is $\| \bfA \|_1 = \sum_{k=1}^{r} s_{\bfA,k}$, where $s_{\bfA,k}$
is the $k$-th largest singular value of $\bfA$ and $r = \rk(\bfA)$. The operator norm is
$\| \bfA \| = \sup_{\bx \in \R^{q}} {\| \bfA \bx \|_2}/{\|\bx \|_2} = s_{\bfA,1}$.
We denote by $\bbfX = (\bfX_1, \ldots, \bfX_n) \in \R^{n \times m_1 \times m_2}$ the three-dimen\-sio\-nal tensor playing
the role of the design matrix. Besides, let $\|\bfA \|_{L_2(\bbfX)}^{2} =  \langle \bfA, \bfA \rangle_{L_2(\bbfX)}$ be the
prediction loss defined via the ``scalar product'' $\langle \bfA, \bfC \rangle_{L_2(\bbfX)} = \frac{1}{n} \sum_{i=1}^{n}
(\langle \bfX_i, \bfA \rangle\langle \bfX_i, \bfC \rangle)$. We will use the notation
$\bu\!^\top\bbfX  = \sum_{i\in[n]} u_i\bfX_i\in \bbfM$ for the product of the tensor $\bbfX$ with the
vector $\bu \in \R^n$.

We now need to define the matrix compatibility factor. Its definition is more involved than in the vector case
because of the fact that the left and right singular spaces differ from one matrix to another. Let $\barbfB$ be any
$m_1\times m_2$ matrix of rank $r=\rk(\barbfB)$ having the singular value decomposition $\barbfB = \bfV_1\bfSigma\bfV_2^\top$.
Here, $\bfSigma$ is a $r\times r$ diagonal matrix with positive diagonal entries, $\bfSigma_{11}\ge\ldots\ge \bfSigma_{rr}>0$,
and $\bfV_j$ is a $m_j\times r$ matrix with orthonormal columns for $j=1,2$. For any  $J\subset [r]$ and $j=1,2$, we define
$\bfV_{j,J}$ as the $m_j\times |J|$ matrix obtained from $\bfV_j$ by removing the columns with indices lying outside
of $J$. This allows us to introduce the linear operators $\calP_{\barbfB,J^c}$ and $\calP^\bot_{\barbfB,J^c}$ from $\bbfM$
to $\bbfM$
$$
\calP_{\barbfB,J^c}(\bfU) = (\bfI_{m_1}-\bfV_{1,J}\bfV_{1,J}^\top)\bfU(\bfI_{m_2}-\bfV_{2,J}\bfV_{2,J}^\top),\quad
\calP_{\barbfB,J^c}^\bot(\bfU) = \bfU-\calP_{\barbfB,J^c}(\bfU).
$$
We define, for every $\barbfB\in\bbfM$, $J\subset [\rk(\barbfB)]$ and $c>0$, the compatibility factor
\begin{equation}
\label{CFMatrix}
\kappa_{\barbfB,J,c} = \inf_{\substack{\bfU \in \bbfM\\ \|\calP_{\barbfB,J^c}(\bfU) \|_{1}<c\|\calP_{\barbfB,J^c}^\bot(\bfU) \|_{1}}}
\frac{c^{2}|J|\,\| \bfU\|_{L_2(\bbfX)}^{2}}{\big(c\|\calP_{\barbfB,J^c}^\bot(\bfU) \|_{1}-\|\calP_{\barbfB,J^c}(\bfU) \|_{1}\big)^{2}}.
\end{equation}
When $J = [\rk(\barbfB)]$, we use the notation  $\kappa_{\barbfB,c}$ instead of $\kappa_{\barbfB,J,c}$. Note that the set $\calC(\barbfB,J,c) =
\{\bfU \in \bbfM:\ \|\calP_{\barbfB,J^c}(\bfU)\|_{1}<c\|\calP_{\barbfB,J^c}^\bot(\bfU) \|_{1}\}$ defines the cone of dimensionality
reduction. It consists of matrices $\bfU$ that can be written as a sum of two matrices $\bfU_1$ and $\bfU_2$ such that $\bfU_1$ is of small rank
and dominates the possibly full-rank matrix $\bfU_2$, in the sense that $\|\bfU_2\|_1 \le c\|\bfU_1\|_1$. Indeed, it suffices to set
$\bfU_1 = \calP_{\barbfB,J^c}^\bot(\bfU)$ and to remark that $\calP_{\barbfB,J^c}^\bot(\bfU) = \bfV_{1,J}\bfV_{1,J}^\top\bfU +
(\bfI_{m_1}-\bfV_{1,J}\bfV_{1,J}^\top)\bfU\bfV_{2,J}\bfV_{2,J}^\top$ is of rank not exceeding $2|J|$.

Similarly to \eqref{H}, we also define the function
\begin{equation}
\label{matrixH}
H(\tau)=m_1 m_2\tau- \int_{\bbfM} G(\bfU)\,\hat\pi_n(\bfU)\,{\rm d}\bfU+G\big(\hatbfB\big),
\end{equation}
where $G(\bfU)=\| \bfU\|_{L_2(\bbfX)}^{2}+\lambda\|\bfU\|_{1}$. The convexity property of the function $G$ entails that
$H(\tau)\le m_1 m_2\tau$ for every $\tau>0$.

\subsection{Nuclear-norm prior and the exponential weights}

The observed outcomes are $n$ real random variables $y_{1},\dots,y_{n}\in\R$.
Contrary to Sections \ref{sec:risk} and \ref{sec:post} where the design points are $\bx_1,\dots,\bx_n\in\R^p$,
this section studies the situation in which we consider $n$ design matrices $\bfX_i \in \R^{m_1\times m_2}$ for $i \in [n]$.
We further assume that there is a regression
matrix $\bfBz \in\bbfM$ such that
\begin{equation}
\label{modelMatrix}
y_i=\tr(\bfX_i^{\top} \bfBz)+\xi_i,\qquad i\in[n],
\end{equation}
where the residuals $\xi_{i}$ are independent and identically distributed according to a centred Gaussian distribution
with variance $\sigma^{2}$. This model is referred to as trace-regression; see, for instance, \cite{Rohde11}.
In this model, the nuclear norm is akin to the $\ell_1$ norm in the vector case. Therefore, to some extent,
the equivalent of the lasso estimator $\hat\bfB^{\rm NNP-LS}_{\lambda}$ with a positive smoothing parameter $\lambda$,
is defined by
\begin{equation}
\hat\bfB^{\rm NNP-LS}_{\lambda}\in{\arg\min}_{\bfB \in \bbfM}
\bigg\{\frac{1}{2n} \sum_{i \in [n]}(\by_i - \langle \bfX_i, \bfB \rangle)^{2}+\lambda\|\bfB\|_{1}\bigg\}.
\end{equation}
This is the nuclear-norm penalized least-squares estimator. Similarly to the vector case, the
above defined estimator $\hat\bfB^{\rm NNP-LS}_{\lambda}$ is the maximum a posteriori estimator
corresponding to the nuclear-norm prior
\begin{equation}
\pi_{0}(\bfB)\propto\exp\Big\{-\frac{\lambda\sigma^{2}\| \bfB\|_{1}}{n}\Big\}.
\end{equation}

This section investigates the prediction performance of the procedure obtained by replacing the optimisation
step by averaging. In the matrix case, we define the potential function $V_n$ and the pseudo-posterior,
respectively, by
\begin{equation}
\label{potentialMatrix}
V_n(\bfB)=\frac{1}{2n} \sum_{i \in [n]}(\by_i - \langle \bfX_i, \bfB \rangle)^{2}+\lambda\|\bfB\|_{1},
\qquad\text{and}\qquad
\hat\pi_n(\bfB)\propto\exp\left\{-\nicefrac1\tau{V_n(\bfB)}\right\}.
\end{equation}
Using these concepts, we define the EWA with the nuclear-norm prior by
\begin{equation}
\label{EWALPMatrix}
\hatbfB^{\rm EWA} =\int_{\bbfM}\bfB\,\hat\pi_n(\bfB)\,{\rm d}\bfB.
\end{equation}
We aim at studying the performance of this estimator in terms of the prediction loss
\begin{equation}
\label{mloss}
\ell_{n}\big(\hatbfB,\bfBz\big)
    =\|\hatbfB-\bfBz\|^2_{L^2(\bbfX)}
    = \frac{1}{n}\sum_{i=1}^{n} \langle \bfX_i, \hatbfB-\bfBz\rangle^{2}.
\end{equation}

\subsection{Oracle Inequality}
The problem of assessing the quality of the nuclear-norm penalised estimators has received a great deal of
attention; see,  for instance, \citep{Srebro2005,CT10, CP11, BYW,GL11,NW11,NW12,Klopp14}. Such an interest in these methods
is mainly motivated by the variety of applications in computer vision and image analysis \citep{Shen, Harchaoui},
recommendation systems \citep{Zhou2008,Lim}, and many other areas. Bayesian approaches to the problem of low-rank matrix
estimation and prediction has been recently analysed by \cite{Alquier2013,Mai15,Cottet16}.

Making the parallel with the sparse vector estimation and prediction problem, we can note that the counterpart of the
vector sparsity $s = \|\bbetaz\|_0$ in the matrix case is the product $(m_1+m_2)\rk(\bfBz)$, representing the number of
potentially nonzero terms in the singular values decomposition of $\bfBz$. Similarly, the counterpart
of the ambient dimension $p$ is the overall number of entries in $\bfBz$ that is $m_1m_2$. In view of these
analogies, the next theorem is a natural extension of \Cref{OI1} to the model of trace-regression. To state it, we need
the following notation:
\begin{equation}\label{vX}
v_\bbfX = \bigg\|\frac1n\sum_{i=1}^n \bfX_i\bfX_i^\top\bigg\|^{1/2} \bigvee \bigg\|\frac1n\sum_{i=1}^n \bfX_i^\top\bfX_i\bigg\|^{1/2}.
\end{equation}

\begin{theorem}
\label{OI1Matrix}
Assume that data are generated by model \eqref{modelMatrix} with $\bxi$ drawn from the Gaussian distribution
$\mathcal N(\mathbf 0_n,\sigma^2\bfI_n)$.  Suppose, in addition, that
$\lambda\ge 2\sigma v_\bbfX\{{\nicefrac{2}{n}\log((m_1 + m_2)/\delta)}\}^{1/2}$, for some $\delta\in(0,1)$.
Then, with probability at least $1-\delta$, the matrix $\hatbfB^{\rm EWA}$ defined in \eqref{EWALPMatrix} satisfies
\begin{equation}\label{matrix:OI}
\ell_{n}(\hatbfB^{\rm EWA},\bfBz) \le \inf_{\substack{\barbfB\in\bbfM \\ J\subset [\rk(\barbfB)]}}
\bigg\{\ell_{n}(\barbfB,\bfBz)+4\lambda\| \calP_{\barbfB,J^c}(\barbfB) \|_{1}+
\frac{ 9 \lambda^{2} |J|}{4\kappa_{\barbfB,J,3}}\bigg\}+2m_1m_2\tau.
\end{equation}
\end{theorem}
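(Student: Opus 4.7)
The proof plan is to mimic the vector-case proof of \Cref{OI1}, replacing the $\ell_1/\ell_\infty$ duality by the nuclear/operator-norm duality, the coordinate support/non-support restrictions by the projections $\calP_{\barbfB,J^c}$ and $\calP^\bot_{\barbfB,J^c}$, and the vector compatibility factor \eqref{CF} by the matrix compatibility factor \eqref{CFMatrix}. A first, essentially mechanical, task is to verify that \Cref{variancePosteriorBound} extends to the matrix setting: its proof uses only the convexity of the penalty (granted by the nuclear norm, as noted in the remark following that proposition) and integration by parts in the $m_1 m_2$-dimensional parameter space $\bbfM$, so the statement is unchanged modulo replacing $p$ by $m_1 m_2$ and $\tfrac{1}{n}\|\bfX(\cdot)\|_2^2$ by $\|\cdot\|^2_{L_2(\bbfX)}$. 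Applied with a generic oracle $\barbfB$ and combined with the convexity of $\bfU \mapsto V_n(\bfU) - \tfrac{1}{2}\|\bfU - \hatbfB^{\rm EWA}\|^2_{L_2(\bbfX)}$ and the vanishing of $\int_{\bbfM}(\bfU-\hatbfB^{\rm EWA})\,\hat\pi_n(\bfU)\,{\rm d}\bfU$, this produces the matrix basic inequality
$$\ell_n(\hatbfB^{\rm EWA},\bfBz) + \ell_n(\hatbfB^{\rm EWA},\barbfB) \le \ell_n(\barbfB,\bfBz) + 2\langle\bxi^\top\bbfX/n,\,\hatbfB^{\rm EWA}-\barbfB\rangle + 2\lambda(\|\barbfB\|_1 - \|\hatbfB^{\rm EWA}\|_1) + 2m_1 m_2\tau,$$
once $V_n$ is expanded via \eqref{potentialMatrix} and \eqref{modelMatrix}.

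Next I would control the noise and the nuclear-norm difference on a high-probability event. Setting $\mathcal{E} = \{\|\bxi^\top\bbfX\| \le n\lambda/2\}$, the nuclear/operator duality gives $2\langle\bxi^\top\bbfX/n,\bfD\rangle \le \lambda\|\bfD\|_1$ on $\mathcal{E}$, where $\bfD = \hatbfB^{\rm EWA}-\barbfB$. To show $\prob(\mathcal{E}) \ge 1-\delta$, I would invoke Gaussian concentration for the spectral norm of the Gaussian random matrix $\bxi^\top\bbfX = \sum_{i\in[n]} \xi_i \bfX_i$, whose matrix-variance parameter equals $\sigma^2\max\{\|\sum_i \bfX_i\bfX_i^\top\|,\|\sum_i \bfX_i^\top\bfX_i\|\} = \sigma^2 n v_\bbfX^2$ by \eqref{vX}; the standard tail $(m_1+m_2)\exp\{-t^2/(2\sigma^2 n v_\bbfX^2)\}$ at $t = n\lambda/2$ delivers $\prob(\mathcal{E}) \ge 1-\delta$ under the assumed lower bound on $\lambda$. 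Writing $\bfD = \calP_{\barbfB,J^c}(\bfD) + \calP^\bot_{\barbfB,J^c}(\bfD)$ and using the SVD decomposition $\barbfB = \bfV_1\bfSigma\bfV_2^\top$ together with the explicit form of $\calP_{\barbfB,J^c}$, I would establish the matrix triangle inequality
$$\|\barbfB\|_1 - \|\hatbfB^{\rm EWA}\|_1 \le \|\calP^\bot_{\barbfB,J^c}(\bfD)\|_1 - \|\calP_{\barbfB,J^c}(\bfD)\|_1 + 2\|\calP_{\barbfB,J^c}(\barbfB)\|_1.$$
Plugging both estimates into the basic inequality, either $\|\calP_{\barbfB,J^c}(\bfD)\|_1 \ge 3\|\calP^\bot_{\barbfB,J^c}(\bfD)\|_1$ and the desired bound already holds without the compatibility term, or $\bfD \in \calC(\barbfB,J,3)$ and \eqref{CFMatrix} yields $3\|\calP^\bot_{\barbfB,J^c}(\bfD)\|_1 - \|\calP_{\barbfB,J^c}(\bfD)\|_1 \le 3\sqrt{|J|/\kappa_{\barbfB,J,3}}\,\|\bfD\|_{L_2(\bbfX)}$. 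A single application of $xy \le x^2/4 + y^2$ to the resulting cross term then absorbs $\ell_n(\hatbfB^{\rm EWA},\barbfB) = \|\bfD\|^2_{L_2(\bbfX)}$ into the left-hand side and produces the announced factor $9\lambda^2|J|/(4\kappa_{\barbfB,J,3})$.

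I expect the main obstacle to be the matrix triangle inequality in the previous paragraph. It rests on the nuclear-norm additivity $\|\bfA + \bfC\|_1 = \|\bfA\|_1 + \|\bfC\|_1$ whenever $\bfA$ and $\bfC$ have mutually orthogonal row and column spaces, a structural property specific to the nuclear norm with no direct analogue in the vector setting. Concretely, one checks that $\barbfB - \calP_{\barbfB,J^c}(\barbfB) = \bfV_{1,J}\bfSigma_J\bfV_{2,J}^\top$ has column and row spaces contained in the ranges of $\bfV_{1,J}$ and $\bfV_{2,J}$ respectively, while by construction every matrix in the range of $\calP_{\barbfB,J^c}$ has column and row spaces in the orthogonal complements of those same ranges; the additivity and the ensuing triangle inequality then follow by block-diagonalising in a suitable orthonormal basis. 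Verifying this rigorously is qualitatively more subtle than the trivial coordinate split $\|\barbbeta\|_1 = \|\barbbeta_J\|_1 + \|\barbbeta_{J^c}\|_1$ used in the proof of \Cref{OI1}, while the other two steps (extending \Cref{variancePosteriorBound} and controlling the spectral norm of a Gaussian matrix) are standard.
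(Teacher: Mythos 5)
Your overall architecture coincides with the paper's: a matrix analogue of \Cref{variancePosteriorBound} (the paper's \Cref{propVariancePosteriorBoundMatrix}, proved via \Cref{propFundamentalCalculus} and its corollaries, which justify the integration by parts for the merely locally-Lipschitz nuclear norm), the resulting basic inequality (\Cref{propfondMatrix}), the operator-norm tail bound with variance parameter $v_\bbfX$ via matrix Gaussian concentration (\Cref{lem:3Matrix}), and the compatibility factor plus $2ab-a^2\le b^2$ (equivalently your $xy\le x^2/4+y^2$) to absorb $\|\bfD\|^2_{L_2(\bbfX)}$; your intermediate inequality $\|\barbfB\|_1-\|\hatbfB\|_1\le 2\|\calP_{\barbfB,J^c}(\barbfB)\|_1+\|\calP^\bot_{\barbfB,J^c}(\bfD)\|_1-\|\calP_{\barbfB,J^c}(\bfD)\|_1$ is exactly the paper's \eqref{eq:17Matrix}. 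The one step where you genuinely diverge is how that inequality is established. The paper follows \citep{KLT11}: it builds an explicit dual certificate $\bfD=\bfV_{1,J}\bfV_{2,J}^\top+\bfPi_{1,J^c}\bfW\bfPi_{2,J^c}$ of operator norm at most one and applies the von Neumann inequality $\|\hatbfB\|_1\ge\langle\hatbfB,\bfD\rangle$. You instead invoke decomposability: additivity of the nuclear norm on pairs with mutually orthogonal row and column spaces, applied to $\barbfB_J=\bfV_{1,J}\bfSigma_J\bfV_{2,J}^\top$ versus the range of $\calP_{\barbfB,J^c}$, combined with a double triangle inequality $\|\hatbfB\|_1\ge\|\barbfB_J+\calP_{\barbfB,J^c}(\bfD)\|_1-\|\calP_{\barbfB,J^c}(\barbfB)\|_1-\|\calP^\bot_{\barbfB,J^c}(\bfD)\|_1$ (this two-step split is the detail your sketch leaves implicit, and it is essential: additivity holds only between the model subspace and the doubly-orthogonal complement, not between the ranges of $\calP^\bot_{\barbfB,J^c}$ and $\calP_{\barbfB,J^c}$ in general). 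Both routes are valid and yield identical constants; the dual-certificate argument is more self-contained at the level of linear algebra, while the decomposability argument is more transparent about which structural property of the nuclear norm is actually being used and transfers verbatim to other decomposable regularizers.
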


This result can be seen as an extension of \citep[Theorem 2]{KLT11} to the exponentially weighted aggregate with a prior
proportional to the scaled nuclear norm. Indeed, if we upper bound the infimum over all matrices $\bfB$ by the infimum over
matrices such that $\rk(\bfB)\le r$ for some given integer $r$, we easily see that \eqref{matrix:OI} yields
\begin{equation}
\ell_{n}(\hatbfB^{\rm EWA},\bfBz) \le \inf_{\substack{\barbfB\in\bbfM \\ \rk(\barbfB)\le r}}
\bigg\{\ell_{n}(\barbfB,\bfBz)+\frac{ 9 \lambda^{2} r}{4\kappa_{\barbfB,3}}\bigg\}+2m_1m_2\tau.
\end{equation}
An advantage of inequality \eqref{matrix:OI} is that it offers a continuous interpolation between
the so called ``slow'' and ``fast'' rates. ``Slow'' rates refer typically to risk bounds that are proportional
to $\lambda$, whereas ``fast'' rates are proportional to $\lambda^2$. For procedures based on $\ell_1$-norm or
nuclear-norm penalty, ``slow'' rates are known to hold without any assumption on the design, while ``fast''
rates require a kind of compatibility assumption. In  \eqref{matrix:OI}, taking $J=\varnothing$, the term with
$\lambda^2$ disappears and we get the ``slow'' rate proportional to $\lambda\|\barbfB\|_1$. The other extreme
case corresponding to $J=[\rk(\barbfB)]$ leads to the ``fast'' rate proportional to $\lambda^2\rk(\barbfB)$,
provided that the compatibility factor is bounded away from zero. The risk bound in \eqref{matrix:OI} bridges
these two extreme situations by providing the rate $\min_{q\in [r]}\{\lambda(s_{q+1,\barbfB}+
\ldots+s_{r,\barbfB})+\lambda^2q\}$, where $r=\rk(\barbfB)$ and $s_{\ell,\barbfB}$ is the $\ell$-th largest
singular value of $\barbfB$. Thus, our risk bound quantifies the quality of prediction in the situations where
the true matrix (or the best prediction matrix) is nearly low-rank, but not necessarily exactly low-rank.

As well as in the vector case, the inequality stated in \Cref{OI1Matrix} is a simplified version of the following one:
for any $\gamma>1$, in the event $\| \bxi^{\top}\bbfX\| \le n\lambda/\gamma$, it holds
\begin{equation}\label{SOI2Matrix}
\ell_{n}(\hatbfB^{\rm EWA},\bfBz) \le \inf_{\substack{\bfB\in\bbfM \\ \calP \in \mathscr{P}}} \bigg\{\ell_{n}(\bfB,\bfBz)
+4\lambda\| \calP_{\barbfB,J^c}(\barbfB) \|_{1} +
\frac{\lambda^{2}(\gamma+1)^{2} |J|}{\gamma^{2}\kappa_{\barbfB,J,(\gamma+1)/(\gamma-1)}}\bigg\}
+2 H(\tau),
\end{equation}
where $H$ is defined by \eqref{matrixH}. This inequality as well as \Cref{OI1Matrix} is proved in \Cref{sec:proofs}.

\subsection{Pseudo-posterior concentration}


In what follows, we state the result on the pseudo-posterior concentration in the matrix case.
Akin to the vector case, one of the main building blocks is \cite[Theorem~1.1]{Bobkov11}, see
\Cref{propBobkov} above.  Since the potential $V_n$ in \eqref{potentialMatrix} is convex,  the proposition
applies and implies that, for every $t>0$,
\begin{equation}
\label{eqBobkovMatrix}
\hat\pi_n\Big(\bfB :\ V_n(\bfB) \leq  \int_{\bbfM}V_n(\bfU)\,\hat\pi_n(\bfU)\, {\rm d}\bfU + \tau \sqrt{m_1 m_2} t\Big) \ge 1-2e^{-t/16}.
\end{equation}
After some nontrivial algebra, this allows us to show that a risk bound similar to \eqref{OI1Matrix}
holds not only for the pseudo-posterior-mean $\bfBEWA$, but also for any matrix $\bfB$ randomly
sampled from $\hat\pi_n$.

\begin{theorem}
\label{theoPosteriorConcentrationMatrix}
Let data be generated by model \eqref{modelMatrix} with  $\bxi\sim\mathcal N(\mathbf{0}_n,\sigma^2\bfI_n)$ and
let the quality of an estimator be measured by the squared prediction loss \eqref{mloss}. Assume that the tuning
parameter $\lambda$ satisfies $\lambda\ge 2\sigma v_\calX \{{\nicefrac{2}{n}\log((m_1 + m_2)/\delta)}\}^{1/2}$, for some
$\delta\in(0,1)$.  Then, with probability at least $1-\delta$, the pseudo-posterior $\hat\pi_n$ with the nuclear-norm prior
defined by {\rm (\ref{potentialMatrix})} is such that the probability
\begin{equation}\label{eqThmConcentrationBound}
\hat\pi_n\bigg(\bfB:
\ell_{n}(\bfB,\bfBz) \le \inf_{\substack{\barbfB\in\bbfM \\ J\subset [\rk(\barbfB)]}}
\bigg\{\ell_{n}(\barbfB,\bfBz)+4\lambda\| \calP_{\barbfB,J^c}(\barbfB) \|_{1}+
\frac{ 9 \lambda^{2} |J|}{4\kappa_{\barbfB,J,3}}\bigg\} + 8 m_1 m_2 \tau \bigg)
\end{equation}
is larger than $1- 2 e^{-\sqrt{m_1m_2}/16}$.
\end{theorem}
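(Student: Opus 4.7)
The strategy is to transport the vector-case proof of \Cref{theoPosteriorConcentration} to the matrix setting, combining Bobkov's log-concave concentration with the matrix analogues of \Cref{variancePosteriorBound} and of the basic inequalities driving the proof of \Cref{OI1Matrix}. First, since $V_n$ in \eqref{potentialMatrix} is convex, \Cref{propBobkov} applies in the form \eqref{eqBobkovMatrix}. Specialising it to $t=\sqrt{m_1m_2}$ yields that, with $\hat\pi_n$-probability at least $1-2e^{-\sqrt{m_1m_2}/16}$,
\[
V_n(\bfB)\le\int_{\bbfM} V_n(\bfU)\,\hat\pi_n(\bfU)\,{\rm d}\bfU+m_1m_2\,\tau.
\]
Second, the remark following \Cref{variancePosteriorBound} confirms that the variance-type bound \eqref{lemConcentration} holds for any convex penalty; its derivation uses only the integration-by-parts identities \eqref{lemConcentrationRk1}--\eqref{lemConcentrationRk2}, applied entrywise to the $m_1m_2$ coordinates of $\bfB$. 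Hence, for every $\barbfB\in\bbfM$,
\[
\int_{\bbfM} V_n(\bfU)\,\hat\pi_n(\bfU)\,{\rm d}\bfU\le m_1m_2\,\tau+V_n(\barbfB)-\tfrac12\int_{\bbfM}\|\bfU-\barbfB\|_{L_2(\bbfX)}^{2}\,\hat\pi_n(\bfU)\,{\rm d}\bfU.
\]
Combining these two displays and discarding the nonpositive integral contribution on the right yields, on the $\hat\pi_n$-typical set,
\[
V_n(\bfB)-V_n(\barbfB)\le 2m_1m_2\,\tau\quad\text{for every }\barbfB\in\bbfM.
\]

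The next stage converts this potential-gap bound into a prediction-loss bound for a random $\bfB\sim\hat\pi_n$. Substituting $y_i=\langle \bfX_i,\bfBz\rangle+\xi_i$ into the definition of $V_n$ gives
\[
V_n(\bfB)-V_n(\barbfB)=\tfrac12\ell_n(\bfB,\bfBz)-\tfrac12\ell_n(\barbfB,\bfBz)-\tfrac1n\langle \bxi^\top\bbfX,\bfB-\barbfB\rangle+\lambda\bigl(\|\bfB\|_1-\|\barbfB\|_1\bigr),
\]
leading, on the event $\mce_\lambda=\{\|\bxi^\top\bbfX\|\le n\lambda/2\}$ and thanks to the duality between the operator and the nuclear norms, to the basic inequality
\[
\tfrac12\ell_n(\bfB,\bfBz)\le\tfrac12\ell_n(\barbfB,\bfBz)+\tfrac{\lambda}{2}\|\bfB-\barbfB\|_1+\lambda\bigl(\|\barbfB\|_1-\|\bfB\|_1\bigr)+2m_1m_2\,\tau.
\]
From here one repeats the nuclear-norm manipulations from the proof of \Cref{OI1Matrix}: split $\|\bfB-\barbfB\|_1$ via the projections $\calP_{\barbfB,J^c}$ and $\calP_{\barbfB,J^c}^\bot$, invoke the reverse triangle inequality on the nuclear norm to derive the cone condition, and conclude using the matrix compatibility factor \eqref{CFMatrix}. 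Together with a Young-type inequality (balanced so as to preserve the coefficient $1$ in front of $\ell_n(\barbfB,\bfBz)$) one obtains
\[
\ell_n(\bfB,\bfBz)\le\ell_n(\barbfB,\bfBz)+4\lambda\|\calP_{\barbfB,J^c}(\barbfB)\|_1+\frac{9\lambda^2|J|}{4\kappa_{\barbfB,J,3}}+8m_1m_2\,\tau.
\]

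Finally, the lower bound on $\lambda$ guarantees $\prob(\mce_\lambda)\ge 1-\delta$ via Gaussian concentration of the spectral norm of $\sum_{i=1}^n\xi_i\bfX_i$, the variance proxy being controlled by $v_\bbfX$ from \eqref{vX}; this is exactly the data-side argument used in the proof of \Cref{OI1Matrix}. Intersecting $\mce_\lambda$ with the $\hat\pi_n$-typical event coming from Bobkov's bound yields the statement.

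The main obstacle is the conversion step: carrying the extra $2m_1m_2\tau$ produced by Bobkov's inequality through the sharp-oracle machinery without spoiling the unit coefficient in front of $\ell_n(\barbfB,\bfBz)$. This is more delicate than in the vector case because the cone $\calC(\barbfB,J,3)$ depends on the singular-vector decomposition of $\barbfB$; the operators $\calP_{\barbfB,J^c}$ and $\calP_{\barbfB,J^c}^\bot$ must therefore be handled carefully, using in particular that $\rk\bigl(\calP_{\barbfB,J^c}^\bot(\bfU)\bigr)\le 2|J|$ in order to extract the factor $|J|$ when invoking the compatibility inequality.
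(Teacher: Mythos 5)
Your overall architecture is the right one (Bobkov's inequality with $t=\sqrt{m_1m_2}$, the matrix variance bound of \Cref{propVariancePosteriorBoundMatrix}, the event $\mce_\gamma$ controlled via the spectral norm of $\sum_i\xi_i\bfX_i$ and $v_\bbfX$, then the nuclear-norm/compatibility machinery of \Cref{lem:2Matrix}), but there is a genuine gap at the step where you ``discard the nonpositive integral contribution'' $-\tfrac12\int_{\bbfM}\|\bfU-\barbfB\|_{L_2(\bbfX)}^2\,\hat\pi_n({\rm d}\bfU)$. That term is not a nuisance to be dropped: it is the only source of the negative quadratic $-c\,\|\bfB-\barbfB\|_{L_2(\bbfX)}^2$ that the sharp-oracle argument needs. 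In \Cref{lem:2Matrix} the conclusion rests on the elementary bound $2ab-a^2\le b^2$ with $a^2=\|\bfB-\barbfB\|_{L_2(\bbfX)}^2$; without the $-a^2$ you are left with a bare cross term $2ab=2\lambda\bigl(\tfrac{\gamma+1}{\gamma}\bigr)\bigl(|J|\,\|\bfB-\barbfB\|_{L_2(\bbfX)}^2/\kappa\bigr)^{1/2}$, and any Young-type splitting of it against $\|\bfB-\barbfB\|_{L_2(\bbfX)}^2\le 2\ell_n(\bfB,\bfBz)+2\ell_n(\barbfB,\bfBz)$ necessarily inflates the coefficient of $\ell_n(\barbfB,\bfBz)$ above $1$ (and deflates the left-hand side), destroying the sharpness you claim to preserve. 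So the basic inequality you derive on $\mce_\lambda$, which contains no quadratic term in $\bfB-\barbfB$, cannot be pushed through to the stated conclusion.

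The paper's resolution is precisely the step you skipped. For $\bfB$ in the Bobkov-typical set one first applies \eqref{lemConcentrationMatrix} with $\barbfB=\bfB$ itself, which gives $\int_{\bbfM}\|\bfU-\bfB\|_{L_2(\bbfX)}^2\,\hat\pi_n({\rm d}\bfU)\le 4m_1m_2\tau$; then, keeping the integral term for a general $\barbfB$ and using $\|\bfU-\barbfB\|_{L_2(\bbfX)}^2\ge\tfrac12\|\bfB-\barbfB\|_{L_2(\bbfX)}^2-\|\bfU-\bfB\|_{L_2(\bbfX)}^2$, one trades the integral for $-\tfrac12\|\bfB-\barbfB\|_{L_2(\bbfX)}^2$ at the cost of upgrading $4m_1m_2\tau$ to $8m_1m_2\tau$ (this is exactly where the factor $8$ in the statement comes from). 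With that halved quadratic in hand, \Cref{lem:2Matrix} is applied with $\bbfX/\sqrt2$ in place of $\bbfX$, and the proof closes without any loss in the leading constant. Your concluding remark correctly identifies ``carrying the extra $\tau$ term through without spoiling the unit coefficient'' as the main obstacle, but the outline as written does not overcome it; reinstating the integral term and the self-application $\barbfB=\bfB$ is the missing idea.
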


We postpone the proof of Theorem \ref{theoPosteriorConcentrationMatrix} to Section \ref{sec:proofs}. One can deduce from
\Cref{theoPosteriorConcentrationMatrix} that if the temperature parameter $\tau$ is sufficiently small, for instance,
$\tau\le \lambda^2/(m_1m_2)$, then a random matrix sampled from the pseudo-posterior $\hat\pi_n$ satisfies nearly the
same oracle inequality as the nuclear-norm penalized least-squares estimator. Indeed, the term $8m_1m_2\tau$, which is the
only difference between the two upper bounds, is in this case negligible with respect to the term involving $\lambda^2$.

\section{Conclusions}\label{sec:concl}

We have considered the model of regression with fixed design and established risk bounds for the exponentially weighted
aggregate with the Laplace prior. This class of estimators encompasses important particular cases such as the lasso
and the Bayesian lasso. The risk bounds established in the present work exhibit a range of values for the temperature
parameter for which the EWA with the Laplace prior has a risk bound of the same order as the lasso. This offers a valuable
complement to the negative results by \cite{Castillo1}, which show that the Bayesian lasso is not rate-optimal in the
sparsity scenario. Note that the Bayesian lasso corresponds to the EWA with the Laplace prior for the temperature parameter
$\tau = \sigma^2/n$, where $\sigma^2$ is the variance of the noise. Our results imply that in order to get rate-optimality
in the sparsity scenario, it is sufficient to choose $\tau$ smaller than $\sigma^2/(np)$.

We have extended the result outlined in the previous paragraph in two directions. First, we have shown that
one can replace the pseudo-posterior mean by any random sample from the pseudo-posterior distribution. This eventually
increases the risk by a negligible additional term, but might be useful from a computational point of view. Second, we
have established risk bounds of the same flavour in the case of trace-regression, when the unknown parameter is a nearly
low-rank large matrix. This result extends those of \citep{KLT11} and unifies the risks bounds leading to the ``slow''
and ``fast'' rates. Furthermore, our result offers an interpolation between these two extreme cases, see the discussion
following \Cref{OI1Matrix}.

With some additional work, all the results established in the present work can be extended to the model of regression
with random design. Furthermore, the case of a partially labelled sample can be handled by coupling the methodology of
the present work with that of \citep{BDGP}. An interesting line of future research is to apply our approach to other priors
constructed from convex penalties such as the mixed $\ell_1/\ell_2$-norm used in the group-lasso \citep{Yuan06}, or
the weighted $\ell_1$-norm of ordered entries used in the slope \citep{SLOPE1}. Another highly relevant and challenging
topic for future work will be to investigate the computational complexity of various methods for approximating the
pseudo-posterior mean or for drawing a sample from the pseudo-posterior density.

\section{Proofs}\label{sec:proofs}

\subsection{Proof of the oracle inequality of Theorem \ref{OI1}}

To ease notation, throughout this section we write $\hatbbeta$ instead of $\bEWA$. Furthermore, for a function
$h:\R^p\to\R$, we often write $\int h\,\hat\pi_n$ instead of $\int_{\R^p} h(\bu)\,\hat\pi_n(\bu)\,{\rm d}\bu$. We
split the proof into three steps. The first step, carried out in \Cref{propfond}, consists in deriving an initial
upper bound on the prediction loss from the fundamental inequality stated in \eqref{lemConcentration}.
The second step, performed in \Cref{lem:2}, shares many common features with the analogous developments for the lasso
and provides a proof of \eqref{SOI2}. Finally, the third step is a standard  bound of the probability of the
event $\mce_\gamma=\{\| \mathbf X^{\top}\bxi\|_{\infty}\le n\lambda/\gamma\}$ based on the union bound and  properties
of the Gaussian distribution.

\begin{lemma}\label{propfond}
For any $\barbbeta\in\R^{p}$,we have
$$
\ell_n(\hatbbeta,\bbetaz)
    \le \ell_n(\barbbeta,\bbetaz) + \frac2n{ \| \bfX^{\top}\bxi \|_{\infty} \|  \hatbbeta - \barbbeta \|_{1}}+
        2\lambda ( \| \barbbeta \|_{1} - \| \hatbbeta \|_{1} ) + 2H(\tau) -\frac1n\|\bfX(\barbbeta-\hatbbeta)\|_2^2.
$$
\end{lemma}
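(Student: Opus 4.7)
The plan is to start from the fundamental inequality \eqref{lemConcentration} of Proposition~\ref{variancePosteriorBound} and convert every term into either $\ell_n(\cdot,\bbetaz)$, $\|\cdot\|_1$, or a cross term involving $\bxi$. Concretely, writing $\by=\bfX\bbetaz+\bxi$ and expanding the quadratic part of $V_n$ gives, for any $\bu$,
\begin{equation*}
V_n(\bu)=\frac{1}{2n}\|\bxi\|_2^2-\frac{1}{n}\bxi^{\!\top}\bfX(\bu-\bbetaz)+\frac{1}{2}\,\ell_n(\bu,\bbetaz)+\lambda\|\bu\|_1.
\end{equation*}
I would substitute this identity both into $V_n(\barbbeta)$ and into $\int V_n\,\hat\pi_n$ (using $\int \bu\,\hat\pi_n(\bu)\,{\rm d}\bu=\hatbbeta$), so that the $\|\bxi\|_2^2/(2n)$ parts cancel and the $\bxi$-dependent contribution collapses to $(1/n)\,\bxi^{\!\top}\bfX(\barbbeta-\hatbbeta)$.

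The next step is the standard $L^2(\hat\pi_n)$ bias--variance identity $\int\|\bfX(\bu-\barbbeta)\|_2^2\hat\pi_n=\int\|\bfX(\bu-\hatbbeta)\|_2^2\hat\pi_n+\|\bfX(\hatbbeta-\barbbeta)\|_2^2$ (and the analogous one for $\ell_n$), which centers everything at $\hatbbeta$ and isolates the desired curvature term $-\tfrac{1}{n}\|\bfX(\barbbeta-\hatbbeta)\|_2^2$. After rearrangement, the remaining $\hat\pi_n$-integrals are exactly
\begin{equation*}
\frac1n\int\|\bfX(\bu-\hatbbeta)\|_2^2\,\hat\pi_n(\bu)\,{\rm d}\bu+\lambda\int\|\bu\|_1\,\hat\pi_n(\bu)\,{\rm d}\bu,
\end{equation*}
and this combination is precisely $\int G(\bu)\,\hat\pi_n(\bu)\,{\rm d}\bu-\tfrac{1}{n}\|\bfX\hatbbeta\|_2^2$; thus by the definition \eqref{H} of $H(\tau)$ it equals $p\tau+\lambda\|\hatbbeta\|_1-H(\tau)$.

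Once these substitutions are plugged in, the $p\tau$ contributions cancel and a straightforward collection of terms yields
\begin{equation*}
\tfrac12\ell_n(\hatbbeta,\bbetaz)\le\tfrac12\ell_n(\barbbeta,\bbetaz)+\lambda(\|\barbbeta\|_1-\|\hatbbeta\|_1)+H(\tau)+\tfrac{1}{n}\bxi^{\!\top}\bfX(\hatbbeta-\barbbeta)-\tfrac{1}{2n}\|\bfX(\hatbbeta-\barbbeta)\|_2^2.
\end{equation*}
Multiplying by $2$ and bounding the noise cross term by Hölder's inequality, $\tfrac{2}{n}\bxi^{\!\top}\bfX(\hatbbeta-\barbbeta)\le \tfrac{2}{n}\|\bfX^{\!\top}\bxi\|_\infty\|\hatbbeta-\barbbeta\|_1$, delivers the claim.

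The steps are essentially bookkeeping: the only genuinely delicate point is recognizing that the $\hat\pi_n$-integrals left over after centering at $\hatbbeta$ assemble into the quantity $\int G\,\hat\pi_n-G(\hatbbeta)+\lambda\|\hatbbeta\|_1$, so that the definition of $H(\tau)$ can absorb them cleanly; getting the prefactors right in the bias--variance decomposition and in the $V_n$ expansion is where I would be most careful.
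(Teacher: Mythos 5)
Your proof is correct and follows essentially the same route as the paper: both arguments start from the fundamental inequality \eqref{lemConcentration}, use the bias--variance decomposition centred at $\hatbbeta$ to produce the term $-\tfrac1n\|\bfX(\barbbeta-\hatbbeta)\|_2^2$, absorb the leftover $\hat\pi_n$-integrals into $H(\tau)$ via its definition \eqref{H}, and finish with H\"older's inequality on the noise cross term. The only difference is organizational --- you expand $V_n$ in terms of $\bxi$ and $\ell_n$ up front, whereas the paper keeps the difference $V_n(\hatbbeta)-V_n(\barbbeta)$ as an intermediate object and converts it to $\ell_n$ differences at the end --- and all the prefactors in your intermediate display check out.
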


\begin{proof}
On the one hand, inequality \eqref{lemConcentration} can be rewritten as
\begin{equation}\label{eq:5}
V_n(\hatbbeta)
    \le V_n(\barbbeta) + \underbrace{V_n(\hatbbeta)-\int_{\R^p} V_n(\bu)\, \hat\pi_n(\bu)\, {\rm d}\bu + p \tau -
    \frac{1}{2n} \int_{\mathbb{R}^{p}} \| \bfX (\bu - \barbbeta) \|_{2}^{2}\,\hat\pi_n(\bu)\, {\rm d}\bu}_{:=A}.
\end{equation}
On the other hand, one can check that
\begin{align}
V_n(\hatbbeta)-\int_{\R^p} V_n(\bu)\,\hat\pi_n(\bu)\,{\rm d}\bu
    &= \frac1{2n}\|\bfX\hatbbeta \|_2^2+\lambda\|\hatbbeta\|_1-\int_{\R^p}\Big(\frac1{2n}\|\bfX\bu \|_2^2+
        \lambda\|\bu\|_1\Big)\,\hat\pi_n(\bu)\,{\rm d}\bu,\\
\int_{\mathbb{R}^{p}} \| \bfX (\bu - \barbbeta) \big\|_{2}^{2}\,\hat\pi_n(\bu)\,{\rm d}\bu
    &= \|\bfX(\barbbeta-\hatbbeta)\|_2^2+\int_{\mathbb{R}^{p}} \| \bfX\bu \|_{2}^{2}\,\hat\pi_n(\bu)\, {\rm d}\bu
        - \| \bfX\hatbbeta \|_{2}^{2}.
\end{align}
These inequalities, combined with the definition of $H$, given in \eqref{H}, yield
\begin{align}
A &= \frac1{n}\|\bfX\hatbbeta \|_2^2+\lambda\|\hatbbeta\|_1-\int_{\R^p}\Big(\frac1{n}\|\bfX\bu \|_2^2 +
        \lambda\|\bu\|_1\Big)\,\hat\pi_n(\bu)\,{\rm d}\bu + p\tau
        -\frac1{2n}\|\bfX(\barbbeta-\hatbbeta)\|_2^2\\
  &= H(\tau)- \frac1{2n}\|\bfX(\barbbeta-\hatbbeta)\|_2^2.\label{eq:6}
\end{align}
Finally, using the definitions of the prediction loss $\ell_n$ and the potential $V_n$, we get
\begin{equation}\label{eq:7}
\ell_n(\hatbbeta,\betaz) - \ell_n(\barbbeta,\betaz)  =  2\big(V_n(\hatbbeta) - V_n(\barbbeta) \big) +
\frac2n\,\bxi^\top\bfX(\hatbbeta-\barbbeta) + 2\lambda( \| \barbbeta \|_{1} - \| \hatbbeta \|_{1} ).
\end{equation}
In view of the duality inequality, the term $\bxi^\top\bfX(\hatbbeta-\barbbeta)$ is upper bounded in absolute value by
$\|\bfX^\top\bxi\|_\infty\|\hatbbeta-\barbbeta\|_1$. Inserting this inequality and \eqref{eq:5} in \eqref{eq:7} and using relation
\eqref{eq:6}, we get the claim of the lemma.
\end{proof}

According to \Cref{propfond}, in the event $\mce_\gamma=\{\| \mathbf X^{\top}\bxi\|_{\infty}\le n\lambda/\gamma\}$, we have
\begin{equation}
\label{OI1e1}
\ell_n(\hatbbeta,\bbetaz)
    \le \ell_n(\barbbeta,\bbetaz) + \frac{2\lambda}{\gamma}(\| \hatbbeta-\barbbeta\|_{1}+
    \gamma\|\barbbeta\|_{1}-\gamma\|\hatbbeta\|_{1}) + 2H(\tau) -\frac1n\|\bfX(\barbbeta-\hatbbeta)\|_2^2.
\end{equation}

\begin{lemma}\label{lem:2}
For every $J\subset [p]$, we have
$$
\frac{2\lambda}{\gamma}(\| \hatbbeta-\barbbeta\|_{1}+
    \gamma\|\barbbeta\|_{1}-\gamma\|\hatbbeta\|_{1}) -\frac1n\|\bfX(\barbbeta-\hatbbeta)\|_2^2
    \le 4\lambda\|\barbbeta_{J^{c}}\|_{1} +
\frac{\lambda^{2}(\gamma+1)^{2}| J|}{\gamma^{2}\kappa_{J,(\gamma+1)/(\gamma-1)}}.
$$
\end{lemma}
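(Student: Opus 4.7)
The plan is to reduce the claim to a purely algebraic two-case argument of the type that is standard in the lasso literature, with the cone constant $c=(\gamma+1)/(\gamma-1)$ dictated precisely by the parameter $\gamma$. Set $\bu=\hatbbeta-\barbbeta$.

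First I would rewrite the $\ell_1$-terms on the left-hand side. Splitting $\|\hatbbeta-\barbbeta\|_1=\|\bu_J\|_1+\|\bu_{J^c}\|_1$ and applying the reverse triangle inequality block by block, namely $\|\hatbbeta_J\|_1\ge\|\barbbeta_J\|_1-\|\bu_J\|_1$ and $\|\hatbbeta_{J^c}\|_1=\|\barbbeta_{J^c}+\bu_{J^c}\|_1\ge\|\bu_{J^c}\|_1-\|\barbbeta_{J^c}\|_1$, one obtains $\|\barbbeta\|_1-\|\hatbbeta\|_1\le\|\bu_J\|_1-\|\bu_{J^c}\|_1+2\|\barbbeta_{J^c}\|_1$. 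Substituting this into the left-hand side of the lemma and collecting the coefficients of $\|\bu_J\|_1$ and $\|\bu_{J^c}\|_1$ isolates the single block
\[
\tfrac{2\lambda(\gamma-1)}{\gamma}\bigl(c\|\bu_J\|_1-\|\bu_{J^c}\|_1\bigr)-\tfrac{1}{n}\|\bfX\bu\|_2^2,
\]
plus the harmless $4\lambda\|\barbbeta_{J^c}\|_1$ that matches the first term on the right-hand side of the claim.

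Next I would argue by dichotomy. If $\|\bu_{J^c}\|_1\ge c\|\bu_J\|_1$, the display above is already $\le 0$ and the inequality is trivial. Otherwise $\bu$ belongs to the cone appearing in \eqref{CF}, whence $\|\bfX\bu\|_2^2/n\ge\kappa_{J,c}(c\|\bu_J\|_1-\|\bu_{J^c}\|_1)^2/(c^2|J|)$. Writing $t=c\|\bu_J\|_1-\|\bu_{J^c}\|_1\ge 0$, the remaining estimate reduces to maximising the concave quadratic $\tfrac{2\lambda(\gamma-1)}{\gamma}\,t-\tfrac{\kappa_{J,c}}{c^2|J|}\,t^2$ over $t\in\R$; the maximum equals $\lambda^2(\gamma-1)^2c^2|J|/(\gamma^2\kappa_{J,c})$, and the identity $c^2(\gamma-1)^2=(\gamma+1)^2$ converts this into exactly the right-hand side of the claim.

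The only mildly delicate point is the book-keeping behind the choice of $c$: it has to be exactly the ratio that appears after tracking the $1/\gamma$ factor inherited from the event $\mce_\gamma$ against the coefficient $1$ in front of $2\lambda(\|\barbbeta\|_1-\|\hatbbeta\|_1)$, and any other choice would break the final identity $c^2(\gamma-1)^2=(\gamma+1)^2$. Once this is identified, the whole argument is deterministic, uses nothing beyond the reverse triangle inequality, the compatibility condition \eqref{CF}, and a one-variable quadratic optimisation, and it does not call upon \Cref{propfond} or any probabilistic input.
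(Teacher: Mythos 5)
Your proof is correct and follows essentially the same route as the paper's: the same split of the $\ell_1$-terms over $J$ and $J^c$ via the reverse triangle inequality, the same invocation of the compatibility factor on the cone, and the same completion-of-the-square (the paper phrases it as $2ab-a^2\le b^2$ where you maximise the concave quadratic in $t=c\|\bu_J\|_1-\|\bu_{J^c}\|_1$, which is the identical computation). Your explicit dichotomy on whether $\bu$ lies in the cone is a minor point of extra care that the paper leaves implicit.
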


This lemma is essentially a copy of Proposition 2 in \citep{BDGP}. We provide here its proof for the sake of self-containedness.
\begin{proof}
Let us fix a $J\subset\{1,\dots,p\}$ and set $\bu = \hatbbeta-\barbbeta$. We have
\begin{equation}
\| \hatbbeta-\barbbeta\|_{1}+\gamma\|\barbbeta\|_{1}-\gamma\|\hatbbeta\|_{1}
=\| \bu_{J}\|_{1}+ \| \bu_{J^{c}}\|_{1}+\gamma\|\barbbeta_{J}\|_{1}
+ \gamma\|\barbbeta_{J^{c}}\|_{1}-\gamma\|\hatbbeta_{J}\|_{1}-\gamma\|\hatbbeta_{J^{c}}\|_{1}.
\label{OI1e2}
\end{equation}
Using inequalities $\|\barbbeta_{J}\|_{1} - \|\hatbbeta_{J}\|_{1} \le \|\bu_{J}\|_{1}$ and
$\|\hatbbeta_{J^{c}}\|_{1} \ge \|\bu_{J^{c}}\|_{1} - \|\barbbeta_{J^{c}}\|_{1}$, we deduce
from equation \eqref{OI1e2} that
\begin{eqnarray}
\| \hatbbeta - \barbbeta \|_{1} + \gamma \| \barbbeta \|_{1} - \gamma \|\hatbbeta \|_{1}
    \le  (\gamma+1) \| \bu_{J} \|_{1} - (\gamma-1) \| \bu_{J^{c}} \|_{1}
        +2\gamma \| \barbbeta_{J^{c}} \|_{1}.
\label{OI1e3}
\end{eqnarray}
Now, by definition of the compatibility factor $\kappa_{J,c}$ given by equation \eqref{CF}, we obtain
\begin{equation}
\label{OI1e6}
\| \bu_{J} \|_{1} - \frac{\gamma-1}{\gamma+1} \| \bu_{J^{c}} \|_{1}
\le \bigg(\frac{| J|\| \bfX \bu \|^{2}_{2}}{n\kappa_{J,(\gamma+1)/(\gamma-1)}}\bigg)^{1/2}.
\end{equation}
Hence, inequalities \eqref{OI1e3} end \eqref{OI1e6} imply that
\begin{align}
\frac{2\lambda}{\gamma}(\| \hatbbeta-\barbbeta\|_{1}+\gamma\|\barbbeta\|_{1}-
\gamma\|\hatbbeta\|_{1}) -\frac1n\|\bfX(\barbbeta-\hatbbeta)\|_2^2
    \le 4\lambda \| \barbbeta_{J^{c}} \|_{1} + 2ab -a^{2},
\label{OI1e7}
\end{align}
where we have used the notation $a^2 = \| \bfX \bu\|^2_{2}/n$ and
$b^2 = \frac{\lambda^2(\gamma+1)^2|J|}{\gamma^2\kappa_{J,(\gamma+1)/(\gamma-1)}}$.
Finally, noticing that
$$
2ab -a^{2}\le b^2 =
\frac{\lambda^{2}(\gamma+1)^{2}| J|}{\gamma^{2}\kappa_{J,(\gamma+1)/(\gamma-1)}},
$$
we get the claim of the lemma.
\end{proof}

Combining the claims of the previous lemmas and taking the minimum with respect to $J$ and $\barbbeta$,
we obtain that the inequality
\begin{equation}\label{eq:8}
\ell_{n} \big( \hatbbeta,\betaz \big) \le \inf_{\substack{\barbbeta \in \R^p \\ J \subset [p]}}
\bigg\{\ell_{n}\big(\barbbeta,\betaz \big)+4\lambda\|\barbbeta_{J^{c}}\|_{1} +
\frac{\lambda^{2}(\gamma+1)^{2}| J|}{\gamma^{2}\kappa_{J,(\gamma+1)/(\gamma-1)}}\bigg\}+2H(\tau)
\end{equation}
holds in the event $\mce_\gamma$. The third and the last step of the proof consists in assessing the
probability of this event.

\begin{lemma}
\label{lemcor1}
If $\bfX =(\bx^{1},\ldots,\bx^{p})$ is a $n\times p$ deterministic matrix with columns $\bx^{j}$ satisfying
$\|\bx^{j}\|_2^2\le n$ and if $\bxi\sim \mathcal N(\mathbf 0_n,\sigma^{2} \bfI_n)$, then, for all $\e>0$,
\begin{equation}
    \prob\big(\| \mathbf X^{\top}\bxi\|_{\infty}>n\e\big)\le p\,\exp\big(-{n\e^{2}}/{(2\sigma^{2})}\big).
\end{equation}
\end{lemma}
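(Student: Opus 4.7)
The proof is a textbook combination of the union bound and the Gaussian tail inequality, so the plan has only three elementary steps.

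First, I would reduce to a bound on each column separately. Writing $\|\bfX^\top\bxi\|_\infty = \max_{j\in[p]} |(\bx^j)^\top\bxi|$ and applying the union bound yields
\begin{equation*}
\prob\bigl(\|\bfX^\top\bxi\|_\infty > n\varepsilon\bigr) \le \sum_{j=1}^p \prob\bigl(|(\bx^j)^\top\bxi| > n\varepsilon\bigr).
\end{equation*}

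Second, I would identify the law of each scalar product. Since $\bxi\sim\mathcal N(\mathbf 0_n,\sigma^2\bfI_n)$, the variable $\zeta_j := (\bx^j)^\top\bxi$ is a centered Gaussian with variance $\sigma^2\|\bx^j\|_2^2 \le n\sigma^2$, where the last inequality uses the assumed column normalisation $\|\bx^j\|_2^2 \le n$.

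Third, I would invoke the classical one-sided Gaussian tail estimate $\prob(Z>t)\le \exp(-t^2/(2s^2))$ for $Z\sim\mathcal N(0,s^2)$ and $t>0$, which follows from Chernoff's argument (optimising $\lambda\mapsto e^{\lambda^2 s^2/2-\lambda t}$ at $\lambda = t/s^2$) or equivalently from Mills' ratio. Combined with the symmetry of $\bxi$, this gives
\begin{equation*}
\prob\bigl(|\zeta_j|>n\varepsilon\bigr)\le \exp\!\Bigl(-\frac{n^2\varepsilon^2}{2\sigma^2\|\bx^j\|_2^2}\Bigr)\le \exp\!\Bigl(-\frac{n\varepsilon^2}{2\sigma^2}\Bigr),
\end{equation*}
and summing over $j\in[p]$ yields the announced inequality.

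There is no genuine obstacle here; this is the single most standard estimate in high-dimensional statistics. The only bookkeeping caveat is the treatment of the absolute value: the sharpest two-sided Gaussian tail carries a factor $2$ in front of the exponential, so the form stated in the lemma corresponds to either absorbing that factor into $\log(p/\delta)$ downstream or to applying the one-sided bound to the $2p$ signed events $\pm(\bx^j)^\top\bxi > n\varepsilon$ and noting that the pre-factor can be tightened by the symmetry of the Gaussian.
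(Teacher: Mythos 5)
Your proof follows the same route as the paper's: a union bound over the $p$ columns, followed by the observation that each $(\bx^j)^\top\bxi\sim\mathcal N(0,\sigma^2\|\bx^j\|_2^2)$ with $\|\bx^j\|_2^2\le n$, and then a Gaussian tail estimate. The one point that needs tightening is the constant you flag at the end: the plain Chernoff bound $\prob(Z>t)\le e^{-t^2/(2s^2)}$ together with symmetry yields the prefactor $2p$, not $p$, and symmetry alone does not remove that factor; the paper instead invokes the sharper one-sided estimate $\int_{x}^{\infty}\phi(u)\,{\rm d}u\le\frac12\exp(-x^2/2)$ for $x>0$ (obtainable, e.g., from $\prob(Z>t)=e^{-t^2/2}\int_0^\infty\phi(v)e^{-vt}\,{\rm d}v\le\frac12 e^{-t^2/2}$), so that the factor $2$ from the two-sided event cancels against the $\frac12$ and the stated prefactor $p$ is obtained. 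Apart from this small bookkeeping issue, which is easily repaired and immaterial for the downstream use in Theorem~\ref{OI1}, your argument is complete and identical in substance to the paper's.
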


\begin{proof} By the union bound, we get
\begin{equation}
\prob\big(\| \bfX^{\top}\bxi\|_{\infty}>n\e\big)=\prob\left(\max_{j\in[p]}| \bxi^{\top} \bx^{j}|>n\e\right)
\le \sum_{i=1}^{p}\prob\big(| \bxi^{\top}\bx^{j}|>n\e\big).
\end{equation}
Then, noticing that for each $j\in[p]$ the random variable $\bxi^{\top}\bx^{j}$ is distributed according to
$\mathcal N(0,\sigma^{2}\| \bx^{j}\|_2^{2})$, we deduce that
$$
\prob\left(\| \bfX^\top\bxi\|_{\infty}>n\e\right)\le 2\sum_{j=1}^{p}\int^{+\infty}_{n\e/(\sigma\| \bx^{j}\|_{2})}\phi(u)\,{\rm d}u,
$$
where $\phi$ stands for the probability density function of the standard Gaussian distribution. Finally,
by using the inequality $\int_{x}^{+\infty}\phi(u)\,{\rm d}u\le\nicefrac{1}{2}\exp(-{x^{2}}/{2})$ that
holds for every $x>0$, we obtain the result.
\end{proof}
A proof of \Cref{OI1} can be deduced from the three previous lemmas as follows. Choosing $\gamma=2$ and
$\e ={\lambda/2} \ge\sigma\sqrt{(2/n)\log(p/\delta)}$ in \Cref{lemcor1}, we get that the event $\mce_\gamma$
has a probability at least $1-\delta$. Furthermore, on this event, we have already established
inequality \eqref{eq:8}. Finally, upper bounding $H(\tau)$ by $p\tau$ leads to the claim of the theorem.

\subsection{Proof of the concentration property of Theorem \ref{theoPosteriorConcentration}}

Let us introduce the set $\mcb = \{\bbeta\in\R^p: V_n(\bbeta) \le \int V_n\,\hat\pi_n + p\tau\}$. Applying \Cref{propBobkov} with $t=\sqrt{p}$,
we get $\hat\pi_n(\mcb)\ge 1-2e^{-\sqrt{p}/16}$. To prove \Cref{theoPosteriorConcentration}, it is sufficient to check that in the event $\mce_\gamma$
(in particular, with $\gamma=2$), every vector $\bbeta$ from $\mcb$ satisfies the inequality
\begin{equation}\label{eq:9}
\ell_{n}(\bbeta,\betaz ) \le \inf_{\substack{\bbeta\in\R^{p} \\ J\subset[p]}}
\bigg\{\ell_{n}(\barbbeta,\betaz )+4\lambda \|\barbbeta_{J^{c}}\|_{1}+
\frac{9\lambda^{2}| J|}{2\kappa_{J,3}}\bigg\} + 8p\tau.
\end{equation}
In the rest of this proof, $\bbeta$ is always a vector from $\mcb$. In view of \eqref{lemConcentration}, it satisfies
\begin{equation}\label{eq:10}
V_n(\bbeta) \le 2p\tau + V_n(\barbbeta)-
\frac{1}{2n} \int_{\mathbb{R}^{p}} \| \bfX (\bu - \barbbeta) \|_{2}^{2}\,\hat\pi_n(\bu)\, {\rm d}\bu.
\end{equation}
Note that \eqref{eq:10} holds for every $\barbbeta\in\R^p$. Therefore, it also holds for $\barbbeta=\bbeta$ and yields
\begin{equation}\label{eq:13}
\frac{1}{n} \int_{\mathbb{R}^{p}} \| \bfX (\bu - \bbeta) \|_{2}^{2}\,\hat\pi_n(\bu)\, {\rm d}\bu \le  4p\tau.
\end{equation}
In addition, we have
\begin{equation}\label{eq:11}
\ell_n(\bbeta,\betaz) - \ell_n(\barbbeta,\betaz)  =  2\big(V_n(\bbeta) - V_n(\barbbeta) \big) +
    \frac2n\,\bxi^\top\bfX(\bbeta-\barbbeta) + 2\lambda( \| \barbbeta \|_{1} - \| \bbeta \|_{1} ).
\end{equation}
Combining \eqref{eq:10}, \eqref{eq:11} and the duality inequality, we get that in $\mce_\gamma$,
\begin{align}
\ell_n(\bbeta,\betaz) - \ell_n(\barbbeta,\betaz)
    & \le  4p\tau - \frac{1}{n} \int_{\mathbb{R}^{p}} \| \bfX (\bu -
    \barbbeta) \|_{2}^{2}\,\hat\pi_n(\bu)\, {\rm d}\bu\\
    &\qquad + \frac{2\lambda}{\gamma}\,\|\bbeta-\barbbeta\|_1 + 2\lambda( \| \barbbeta \|_{1} -
    \| \bbeta \|_{1} ).\label{eq:12}
\end{align}
We use now the inequality $\|\bfX(\bu-\barbbeta)\|_2^2\ge \nicefrac12\|\bfX(\bbeta-\barbbeta)\|_2^2-\|\bfX(\bu-\bbeta)\|_2^2$,
in conjunction with \eqref{eq:13}, to deduce from \eqref{eq:12} that
\begin{align}
\ell_n(\bbeta,\betaz) - \ell_n(\barbbeta,\betaz)
    & \le  8p\tau + \frac{2\lambda}{\gamma}\,\|\bbeta-\barbbeta\|_1 + 2\lambda( \| \barbbeta \|_{1} -
    \| \bbeta \|_{1} ) - \frac{1}{2n} \| \bfX (\bbeta -\barbbeta) \|_{2}^{2}.
\end{align}
We can apply now \Cref{lem:2} with $\bbeta$ instead of $\hatbbeta$ and $\bfX/\sqrt{2}$ instead of $\bfX$ in order to get
the claim of \Cref{theoPosteriorConcentration}.

\subsection{Proof of Proposition \ref{prop:evalH}}

For the sake of simplicity, we abbreviate $\hat\bbeta=\bEWA$ and $\hat\bbeta\/^{0}=\bbetaLS$ throughout the proof. In particular, notation $\hat\beta_j$ (resp. $\hat\beta{}^{0}_j$) will refer to the $j$-th entry of $\bEWA$ (resp. $\bbetaLS$). First, observe that one can write the posterior density as $\hat\pi(\bu)\propto \exp(-\bar V_n(\bu)/\tau)$ with
\begin{align}
\bar V_n({\bu})&  = V_n(\bu) -\frac{1}{2n}\|\by\|^2+\frac{1}{2}\|\hat\bfSigma_n^{1/2}\hat\bbeta{}^{0}\|^2_2\\
& = \frac1{2}\|\hat\bfSigma_n^{1/2}(\bu-\hat\bbeta{}^{0})\|_2^2+\lambda\|\bu\|_1.
\label{vnbar}
\end{align}
On the one hand, the integration by parts formula yields
$$
\int_{\RR^p} [\bu^\top \nabla \bar V_n(\bu)]\,\hat\pi(\bu)\,{\rm d}\bu = -\tau\int_{\RR^p} \bu^\top \nabla \hat\pi(\bu)\,{\rm d}\bu = p\tau.
$$
On the other hand,  the expression of $\bar V_n(\bu)$ written in \eqref{vnbar} leads directly to
\begin{align}
\int_{\RR^p} [\bu^\top \nabla \bar V_n(\bu)]\,\hat\pi(\bu)\,{\rm d}\bu &=  \int_{\RR^p} G(\bu)\,\hat\pi(\bu)\,{\rm d}\bu
-\hat\bbeta^\top\hat\bfSigma_n\hat\bbeta{}^{0},
\end{align}
where we recall that $G(\bu)=\|\bfX\bu\|^{2}_{2}/n+\lambda\|\bu\|_{1}=\|\hat\bfSigma_n^{1/2}\bu\|^2_2+\lambda\|\bu\|_{1}$. This yields
$$
\int_{\RR^p} G(\bu)\,\hat\pi(\bu)\,{\rm d}\bu = p\tau+\hat\bbeta^\top\hat\bfSigma_n\hat\bbeta{}^{0},
$$
and, hence,
\begin{align}
H(\tau)
    &= p\tau - \frac1n\int_{\RR^p} G(\bu)\hat\pi(\bu)\,{\rm d}\bu +\|\hat\bfSigma_n^{1/2}\hat\bbeta\|_2^2+\lambda\|\hat\bbeta\|_1\\
    &= \|\hat\bfSigma_n^{1/2}\hat\bbeta\|_2^2+\lambda\|\hat\bbeta\|_1-\hat\bbeta^\top\hat\bfSigma_n\hat\bbeta{}^{0},
    \label{exprH}
\end{align}
which proves the first claim of Proposition \ref{prop:evalH}. Let us now consider the case where $\hat\bfSigma_n = \bfI_p$. Then, recalling
the definition of $\bar V_n(\bu)$ in \eqref{vnbar}, a straightforward calculation reveals that
\begin{align}
\bar V_n(\bu)&= \frac{\lambda^2p}{2}+\sum_{j=1}^p\left[\frac{1}{2}\left(u_j-\hat\beta^{0}_j+\lambda\sgn(u_j)\right)^2+\lambda\hat\beta^{0}_j\sgn(u_j)\right].
\end{align}
Hence, we deduce that $\hat\pi(\bu)=\prod_{j=1}^{p}\hat\pi_j(u_j)$ where
\begin{equation}
\hat\pi_j(t)\propto \exp\Big(-\frac{1}{2\tau}(t-\hat\beta^{0}_j+\lambda\sgn(t))^2-\frac{\lambda}{\tau}\hat\beta^{0}_j\sgn(t)\Big).
\end{equation}
Next, let $\varphi(t)=\int_{t}^{+\infty}\phi(x){\rm d}x$ where $\phi$ denotes the density function of the standard normal distribution. For a fixed $j\in[p]$,
we consider the abbreviations $a=\lambda/\sqrt{\tau}$ and $b=\hat\beta^0_j/\sqrt{\tau}$. Then, the change of variable $u=t/\sqrt{\tau}$ in the first integral below,
together with the observation that $\sgn(t)=\sgn(t/\sqrt{\tau})$ for all real $t$, leads to
\begin{align}
\hat\beta_j  = \int t\,\hat\pi_{j}(t)\,{\rm d}t
        & = \sqrt{\tau}\,\frac{\int u \exp\{-\frac12(u-b+a\sgn(u))^2-ab\sgn(u)\}\,{\rm d}u}
            {\int  \exp\{-\frac12(u-b+a\sgn(u))^2-ab\sgn(u)\}\,{\rm d}u}\\
        & = \sqrt{\tau}\,\frac{(a+b)e^{ab}\varphi(a+b)-(a-b)e^{-ab}\varphi(a-b)}{e^{ab}\varphi(a+b)+e^{-ab}\varphi(a-b)}\\
        & = \sqrt{\tau}\,\sgn(b)\frac{(a+\vert b\vert)e^{a\vert b\vert}\varphi(a+\vert b \vert)-(a-\vert b\vert)e^{-a\vert b\vert}
            \varphi(a-\vert b\vert)}{e^{a\vert b\vert}\varphi(a+\vert b \vert)+e^{-a\vert b\vert}\varphi(a-\vert b\vert)}\\
        & = \hat\beta^{0}_{j}+\lambda\sgn(\hat\beta^{0}_{j})\frac{e^{a\vert b\vert}\varphi(a+\vert b \vert)-
            e^{-a\vert b\vert}\varphi(a-\vert b\vert)}{e^{a\vert b\vert}\varphi(a+\vert b \vert)+e^{-a\vert b\vert}\varphi(a-\vert b\vert)}\\
        & = \hat\beta^{0}_{j}+\lambda\sgn(\hat\beta^{0}_{j})\frac{\Psi(a+\vert b \vert)-\Psi(a-\vert b\vert)}{\Psi(a+\vert b \vert)+\Psi(a-\vert b\vert)},
\label{prop1:e1}
\end{align}
where $\Psi(t)=e^{t^{2}/2}\varphi(t)$. In other terms, noticing that $\Psi_{\tau}(t)=\Psi(t/\sqrt{\tau})$, we have obtained
\begin{equation}
\hat\beta_j = \sgn(\hat\beta{}^{0}_{j})\left(|\hat\beta{}^{0}_{j}|-\lambda w(\tau,\lambda,|\hat\beta{}^{0}_{j}|)\right),
\label{proofbewaj}
\end{equation}
where we have denoted $w(\tau,\lambda,t)=(\Psi_{\tau}(\lambda-t)-\Psi_{\tau}(\lambda+t))/(\Psi_{\tau}(\lambda-t)+\Psi_{\tau}(\lambda+t))$.
Finally, injecting \eqref{proofbewaj} in \eqref{exprH} leads easily to the desired expression for $H$.

\subsection{Proofs for Stein's unbiased risk estimate \eqref{riskEstim}}

In what follows, we denote $\hatbbeta=\hatbbeta{}^{\rm EWA}$ for brevity. The dependance on $\by$ will sometimes be
made explicit in the proof for clarity. Below, it is understood that all gradients are taken with respect to
variable $\by$ and that $\partial_i$ refers to the $i$-th element of the gradient. In addition, for every
function $h:\R^n\to\R^n$, we use the notation $\nabla\cdot h$ for the divergence operator
$\sum_i \partial_i h_i$. Finally, function $f$ will refer to the non-normalized pseudo-posterior,
$f(\bbeta,\by) = \exp(-{V_n(\bbeta, \by)}/{\tau})$, and $g$ to its integral (the normalizing constant), {\it i.e.}
\begin{equation}
\label{g}
g(\by) = \int_{\R^p}f(\bbeta,\by)\,{\rm d}\bbeta.
\end{equation}
According to Stein's formula, an unbiased estimate of the risk of $\hatbbeta$---under Gaussian
noise---is given by
\begin{equation}
\label{gsure}
\hat R^{\rm EWA}(\lambda,\tau) =
\frac1n\| \by - \bfX \hatbbeta\|_2^{2} - \frac{\sigma^2}{n}  + \frac{2\sigma^2}{n} \nabla\cdot (\bfX\hatbbeta).
\end{equation}
Therefore, to prove \eqref{riskEstim}, we need  only to show that
\begin{eqnarray}
\label{nablaBeta}
\nabla \hatbbeta(\by) &=& \frac{\Cov_{\hat\pi}(\bbeta)}{n\tau} \bfX^{\top} \in \R^{p\times n}.
\end{eqnarray}
Indeed, this will imply that
$$
\nabla\cdot (\bfX\hatbbeta) = \sum_i \bx_i^\top \partial_i\hatbbeta(\by) = \frac1{n\tau}
\sum_i \bx_i^\top\Cov_{\hat\pi}(\bbeta)\bx_i = \frac1{n\tau}\int_{\R^p} \|\bfX(\bbeta-\hatbbeta)\|_2^2\,\hat\pi_n({\rm d}\bbeta),
$$
which, combined with \eqref{gsure}, leads to \eqref{riskEstim}. To do so, we proceed in two steps. First, we prove that
\begin{eqnarray}
\label{nablaIPi}
\partial_i \hat\pi(\bbeta,\by) &=&\frac{\bx_i}{n\tau} (\bbeta-\hatbbeta(\by)) \hat\pi(\bbeta,\by).
\end{eqnarray}
Secondly, we show that
\begin{eqnarray}
\label{nablaIBeta}
\partial_i \hat\beta(\by) &=&  \frac{\Cov_{\hat\pi}(\bbeta)}{n\tau} \bx_i^{\top}.
\end{eqnarray}
Given the notations introduced above, we have
\begin{equation}
\label{fOnG}
\hat\pi(\bbeta,\by) = \frac{f(\bbeta, \by)}{g(\by)}.
\end{equation}
Then, notice that
\begin{equation}
\label{fPrime}
{\partial_i f}(\bbeta,\by) = -\bigg(\frac{y_i - \bx_i^\top\bbeta }{n\tau}\bigg) f(\bbeta, \by).
\end{equation}
Hence, combining \eqref{fOnG} and \eqref{fPrime} yields,
\begin{eqnarray}
\label{fPrimeOnG}
\frac{{\partial_i f}(\bbeta,\by)}{g(\by)} &=& - \frac{1}{n\tau}(y_i  - \bx_i^\top \bbeta)\,\hat\pi(\bbeta,\by).
\end{eqnarray}
Moreover, using one more time \eqref{fPrime}, we get
\begin{eqnarray}
\frac{{\partial_i g}(\by)}{g(\by)} &=& \frac{\int {\partial_i f}(\bbeta,\by){\rm d}\bbeta}{g(\by)}
\nonumber\\
&=& - \frac{1}{n\tau}\frac{\int (y_i - \bx_i^\top\bbeta) f(\bbeta, \by){\rm d}\bbeta}{g(\by)}
\nonumber\\
&=& \frac{\bx_i^\top}{n\tau} \frac{\int \bbeta f(\bbeta, \by){\rm d}\bbeta}{g(\by)} - \frac{y_i}{n\tau}
\nonumber\\
&=& {\frac{1}{n\tau} (\bx_i^\top\hatbbeta(\by) - y_i),}
\label{gPrimeOng}
\end{eqnarray}
where \eqref{gPrimeOng} follows from the definition of $\hatbbeta$, $f$ and $g$. With these remarks in mind, observe that
\begin{align}
\partial_i \hat\pi(\bbeta, \by)
 &= \frac{{\partial_i f}(\bbeta,\by)  g(\by) - f(\bbeta,\by)  {\partial_i g}(\by)}{g(\by)^{2}}\\
 &= \frac{{\partial_i f}(\bbeta,\by)}{ g(\by)} - {\hat\pi(\bbeta,\by)}\frac{{\partial_i g}(\by)}{g(\by)}\\
 &= \frac{\bx_i^\top}{n\tau}(\bbeta  -  \hatbbeta(\by))\,\hat\pi(\bbeta,\by),
\end{align}
where the last line follows easily by combining \eqref{fPrimeOnG} and \eqref{gPrimeOng}. We have therefore proved \eqref{nablaIPi}
and now proceed to showing \eqref{nablaIBeta}. To that aim, we write
\begin{equation}
\partial_i \hatbbeta(\by) =\partial_i \int_{\R^p}\bbeta\,\hat{\pi}(\bbeta, \by)\,{\rm d}\bbeta
=   \int_{\R^p}\bbeta\, \partial_i \hat{\pi}(\bbeta, \by)\,{\rm d}\bbeta.
\end{equation}
Using  \eqref{nablaIPi} and then transposing the product $\bx_i^\top (\bbeta - \hatbbeta(\by)) \in  \R$ we have,
\begin{eqnarray}
\partial_i \hatbbeta(\by) & = & \frac{1}{n\tau} \int_{\R^p}\bbeta (  \bx_i^\top(\bbeta-\hatbbeta(\by)) \hat\pi(\bbeta,\by)) {\rm d}\bbeta
\nonumber\\
& = & \frac{1}{n\tau}\int_{\R^p} ( \bbeta \bbeta^{\top}  - \bbeta  \hatbbeta(\by)^{\top}  )\bx_i\,\hat\pi(\bbeta,\by) {\rm d}\bbeta
\nonumber\\
& = & \frac{1}{n\tau}\bigg(\int_{\R^p}  \bbeta \bbeta ^{\top}\hat\pi(\bbeta,\by) {\rm d}{\bbeta} -  \hatbbeta(\by) \hatbbeta(\by) ^{\top}
\bigg)\, \bx_i,
\nonumber
\end{eqnarray}
which is equivalent to \eqref{nablaIBeta} and concludes the proof of \eqref{riskEstim}.

\subsection{Proof of the results in the matrix case}
To ease notation, throughout this section we write $\hatbfB$ instead of $\bfBEWA$. Furthermore, for a function
$h:\bbfM\to\R$, we often use the notation $\int h\,\hat\pi_n$ or $\int_{\mcM} h(\bfU)\,\hat\pi_n({\rm d}\bfU)$ instead of
$\int_{\bbfM} h(\bfU)\,\hat\pi_n(\bfU)\,{\rm d}\bfU$.
In this section, we prove \Cref{OI1Matrix} and \Cref{theoPosteriorConcentrationMatrix}. To do so, we state and
prove \Cref{propFundamentalCalculus} as well as \Cref{propVariancePosteriorBoundMatrix} that will be used
throughout the proofs.

\Cref{propFundamentalCalculus} is an extension of the fundamental theorem of calculus in the case of
locally-Lipschitz functions. It will be very useful to work with any (pseudo-)posterior of the form
$\hat\pi_n$ corresponding to convex penalties.

Let us first recall that a function $f:\R\to\R$ is called locally-Lipschitz-continuous, or locally-Lipschitz,
if it is Lipschitz-continuous on any bounded interval. Clearly, any locally-Lipschitz function is absolutely
continuous (in the sense of Definition 7.17 in \citet{Rud87}) and, therefore, is almost everywhere (with respect
to the Lebesgue measure) differentiable.

\begin{prop}
\label{propFundamentalCalculus}
For any locally-Lipschitz function $f$ such that $\lim_{|x|\to\infty} f(x) = 0$ and $f'\in L^1{(\R)}$, we have
$$
\int_{\R} f'(x) {\rm d} x = 0.
$$
\end{prop}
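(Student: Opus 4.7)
The plan is to combine the fundamental theorem of calculus for absolutely continuous functions with the vanishing boundary hypothesis and the integrability of $f'$. Since $f$ is locally Lipschitz, it is absolutely continuous on every bounded interval $[a,b]$, and therefore differentiable almost everywhere with
\begin{equation*}
f(b) - f(a) = \int_a^b f'(x)\,{\rm d}x
\end{equation*}
holding for all real $a<b$ (this is the absolutely-continuous version of FTC; see e.g.\ Theorem 7.20 in Rudin's \emph{Real and Complex Analysis}, which is already cited in the excerpt via the reference to Definition 7.17).

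Next, I would let $a\to-\infty$ and $b\to +\infty$. On the left-hand side, the boundary condition $\lim_{|x|\to\infty} f(x)=0$ immediately gives $f(b)-f(a)\to 0$. On the right-hand side, the assumption $f'\in L^1(\R)$ means that the function $\mathbf 1_{[a,b]}f'$ is dominated in absolute value by the integrable function $|f'|$, so by dominated convergence (applied separately on the positive and negative real axes, or directly) one obtains
\begin{equation*}
\int_a^b f'(x)\,{\rm d}x \;\longrightarrow\; \int_\R f'(x)\,{\rm d}x.
\end{equation*}
Equating the two limits yields $\int_\R f'(x)\,{\rm d}x = 0$, as claimed.

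The only subtle point, and the one requiring care, is the justification that FTC applies in the form above despite $f$ being merely locally Lipschitz rather than continuously differentiable. This is handled by noting that local Lipschitz continuity implies absolute continuity on compact intervals, which is exactly the hypothesis under which the Lebesgue FTC holds. Everything else is routine: the boundary hypothesis kills $f(b)-f(a)$, and $L^1$ integrability of $f'$ legitimizes the passage to the limit in the integral. No deeper machinery is needed, which is precisely why the authors isolate this proposition as an auxiliary tool for handling the non-smooth $\ell_1$-type penalties used earlier in the paper.
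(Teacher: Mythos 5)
Your proof is correct and follows essentially the same route as the paper: local Lipschitz continuity gives absolute continuity on compact intervals, Rudin's Theorem 7.20 yields the Lebesgue FTC, and the combination of the $L^1$ hypothesis (to pass to the limit in the integral) with the vanishing boundary condition finishes the argument. The only cosmetic difference is that the paper uses symmetric intervals $[-a,a]$ rather than general $[a,b]$.
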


\begin{proof} The result of \cite[Theorem 7.20]{Rud87} implies that for any $a>0$,
$$
\int_{-a}^{a} f'(x)\, {\rm d} x = f(a) - f(-a).
$$
Since, by assumption, the derivative $f'$  is absolutely integrable over $\R$, we have
$$
\int_{\R} f'(x)\, {\rm d} x = \lim_{a \to +\infty}
\int_{-a}^{a} f'(x)\, {\rm d} x = \lim_{a\to +\infty} \big(f(a) - f(-a)\big)=0.
$$
This completes the proof.
\end{proof}

\begin{cor}
\label{corFundamentalCalculus1}
 Let $\mathbf{0}_{m_1, m_2}$ be the null element of $\bbfM$. Then
 $$\int_{\mcM}\nabla \hat\pi_n(\bfU)\, {\rm d}\bfU = \mathbf{0}_{m_1, m_2}.$$
\end{cor}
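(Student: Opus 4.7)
The plan is to reduce the matrix-valued statement to the one-dimensional result of Proposition \ref{propFundamentalCalculus} applied entry by entry. Since the claim asserts that an $m_1\times m_2$ matrix is zero, it suffices to prove that for every pair $(i,j)\in[m_1]\times[m_2]$ one has
\begin{equation}
\int_{\bbfM} \frac{\partial \hat\pi_n}{\partial U_{ij}}(\bfU)\,{\rm d}\bfU = 0.
\end{equation}
Using Fubini's theorem (which I would justify by the fact that $\hat\pi_n$ is a probability density and the partial derivative is integrable, to be verified in the next step), I would isolate the variable $U_{ij}$ and write this double integral as an outer integral over the remaining $m_1m_2-1$ entries, with an inner one-dimensional integral in $U_{ij}$.

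Next, for fixed values of all other entries, I would apply Proposition \ref{propFundamentalCalculus} to the univariate function $t\mapsto \hat\pi_n(\bfU|_{U_{ij}=t})$. Three hypotheses need to be checked. First, this function is locally Lipschitz: indeed, $V_n$ is the sum of a quadratic form in the entries of $\bfU$ and the nuclear norm $\|\bfU\|_1$, both of which are locally Lipschitz in $t$, so the composition with the exponential is locally Lipschitz on bounded intervals. Second, the decay at infinity, $\hat\pi_n(\bfU|_{U_{ij}=t})\to 0$ as $|t|\to\infty$, follows from the dominant quadratic term $\frac{1}{2n}\sum_i(y_i-\langle \bfX_i,\bfU\rangle)^2$ in $V_n$, which grows at least quadratically in $t$ along any direction where the design is non-degenerate, and otherwise from the nuclear-norm term which still grows at least linearly in $t$. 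Third, the integrability of the derivative over $\R$ with respect to $t$ follows from the same quadratic/linear growth, yielding an exponentially decaying bound on $|\partial_t \hat\pi_n|$.

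Having verified these hypotheses almost everywhere in the remaining entries, Proposition \ref{propFundamentalCalculus} gives that the inner integral vanishes identically, and integrating back over the remaining coordinates returns zero. Repeating this argument for each $(i,j)$ yields the matrix identity $\int_{\bbfM}\nabla \hat\pi_n(\bfU)\,{\rm d}\bfU = \mathbf{0}_{m_1,m_2}$.

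The main obstacle I anticipate is the rigorous justification that the univariate slices of $\hat\pi_n$ satisfy all three assumptions of Proposition \ref{propFundamentalCalculus} simultaneously; in particular, handling the case where the design is degenerate in the direction of a specific coordinate $U_{ij}$ (so the quadratic growth argument fails) requires falling back on the coercive contribution of the nuclear norm, which takes a little care since $\|\bfU\|_1$ is not separable across entries. Once this coercivity is established, everything else is a routine application of Fubini and the scalar fundamental theorem of calculus for locally Lipschitz functions.
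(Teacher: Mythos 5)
Your proposal follows essentially the same route as the paper: fix all entries of $\bfU$ except the $(i,j)$-th one, apply the scalar statement of Proposition~\ref{propFundamentalCalculus} to the resulting univariate slice of $\hat\pi_n$, and integrate out the remaining coordinates. The one point you leave open---coercivity of the nuclear norm in a single entry, needed precisely when the quadratic term is degenerate in that direction---is the only non-routine step, and it is closed by a one-line chain of inequalities rather than a delicate argument: since the nuclear norm dominates the operator norm, $\|\bfU\|_1^2 \ge \|\bfU\|^2 = \|\bfU^\top\bfU\| \ge (\bfU^\top\bfU)_{jj} = \sum_{k} U_{kj}^2 \ge U_{ij}^2$, hence $\|\bfU\|_1 \ge |U_{ij}|$ and the slice of $\hat\pi_n$ decays exponentially as $|U_{ij}|\to\infty$ with the other entries fixed, which also yields the integrability of its derivative. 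With that bound supplied, your argument is complete and coincides with the paper's proof.
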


\begin{proof}
We want to prove that $\int_{\mcM}[\partial_{\bfU_{\bd}} \hat\pi_n(\bfU)]\, {\rm d}\bfU = 0$ for any
$d := (k,l) \in[m_1]\times [m_2]$. To this end, we will simply prove that
$\int_{\R}[\partial_{\bfU_{\bd}} \hat\pi_n(\bfU)]\, {\rm d}\bfU_{\bd} = 0$, where the integration is done with respect
to the $\bd$-th entry of $\bfU$ when all the other entries are fixed.

The function $\bfU\mapsto V_n(\bfU)$ is locally-Lipschitz as the sum of a continuously differentiable function
(the quadratic term) and a Lipschitz term (the nuclear norm). We note in passing that any norm in a finite-dimensional
space is Lipschitz continuous thanks to the triangle inequality and the equivalence of norms. In addition, one easily checks
that $\|\bfU\|_1^2\ge \|\bfU\|^2 = \|\bfU^\top\bfU\|\ge \max_{l} (\bfU^\top\bfU)_{l,l}\ge \bfU_{\bd}^2$. This implies that
if $\bfU_{\bd}$ tends to infinity while all the other entries of $\bfU$ remain fixed, the nuclear norm $\|\bfU\|_1$ tends to
infinity\footnote{This assertion can be also established for any other norm using the equivalence of norms in $\bbfM$.}.

As a consequence, the function $\bfU_{\bd}\mapsto \pi_n(\bfU)\propto \exp\{-V_n(\bfU)/\tau\}$ is locally-Lipschitz and
tends to zero when $|\bfU_{\bd}|\to\infty$. This implies that we can apply \Cref{propFundamentalCalculus} and the claim of the corollary follows.
\end{proof}

\begin{cor}
\label{corFundamentalCalculus2}
 With the notation introduced in \Cref{sec:matrix}, we have
 \begin{equation}\label{eq:14}
 \int_{\mcM} \langle \bfU , \nabla V_n(\bfU) \rangle\, \hat\pi_n(\bfU)\,{\rm d}\bfU = \tau m_1  m_2.
 \end{equation}
\end{cor}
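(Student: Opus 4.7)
The plan is to mimic the argument used in the vector case at display \eqref{lemConcentrationRk2}, replacing the coordinate-wise partial derivatives $\partial_{u_j}$ by the partial derivatives $\partial_{U_{\bd}}$ with respect to the entries $\bd=(k,l)\in[m_1]\times[m_2]$ of the matrix $\bfU$, and replacing the classical integration by parts by the locally-Lipschitz version provided by \Cref{propFundamentalCalculus}.

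First, I would expand the scalar product in terms of entries:
\begin{equation}
\langle \bfU,\nabla V_n(\bfU)\rangle = \sum_{\bd\in[m_1]\times[m_2]} U_{\bd}\,\partial_{U_{\bd}} V_n(\bfU),
\end{equation}
valid at every point where $V_n$ is differentiable (i.e., almost everywhere, since $V_n$ is the sum of a smooth quadratic function and the nuclear norm, which is locally-Lipschitz). Next, I would use the identity $\partial_{U_{\bd}} \hat\pi_n(\bfU) = -\tau^{-1}\,\partial_{U_{\bd}} V_n(\bfU)\,\hat\pi_n(\bfU)$ (holding a.e.) to convert the integrand to $-\tau\,U_{\bd}\,\partial_{U_{\bd}}\hat\pi_n(\bfU)$, and then apply the product rule to write
\begin{equation}
U_{\bd}\,\partial_{U_{\bd}}\hat\pi_n(\bfU) = \partial_{U_{\bd}}\!\big(U_{\bd}\,\hat\pi_n(\bfU)\big) - \hat\pi_n(\bfU).
\end{equation}

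The main step is then to show that, for each $\bd$,
\begin{equation}
\int_{\bbfM}\partial_{U_{\bd}}\!\big(U_{\bd}\,\hat\pi_n(\bfU)\big)\,{\rm d}\bfU = 0.
\end{equation}
This is where the hard part lies: I would invoke \Cref{propFundamentalCalculus} applied to the one-variable function $t\mapsto t\,\hat\pi_n(\bfU)$, where only the entry $U_{\bd}=t$ varies while all other entries are frozen, and then integrate over the remaining $m_1m_2-1$ coordinates. The hypotheses of \Cref{propFundamentalCalculus} need checking: the function is locally-Lipschitz (product of $t$ with a locally-Lipschitz density, as established in the proof of \Cref{corFundamentalCalculus1}), and it decays to $0$ as $|t|\to\infty$ thanks to the bound $\|\bfU\|_1\ge |U_{\bd}|$ already used in \Cref{corFundamentalCalculus1}, which forces $\hat\pi_n(\bfU)\lesssim e^{-\lambda|t|/\tau}$ and therefore dominates the linear growth in $t$. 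The integrability of the derivative follows from the same exponential control. A Fubini argument then allows us to lift the one-dimensional vanishing to the full integral over $\bbfM$.

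Combining the three steps gives
\begin{equation}
\int_{\bbfM} U_{\bd}\,\partial_{U_{\bd}} V_n(\bfU)\,\hat\pi_n(\bfU)\,{\rm d}\bfU = -\tau\Big(0 - \int_{\bbfM}\hat\pi_n(\bfU)\,{\rm d}\bfU\Big) = \tau,
\end{equation}
and summing over the $m_1m_2$ pairs $\bd$ yields the desired identity. The only genuine obstacle is the justification of the vanishing boundary term, which is handled by the exponential decay of $\hat\pi_n$ along every coordinate line together with \Cref{propFundamentalCalculus}; the remainder of the computation is algebraic and parallels the vector-case proof of \eqref{lemConcentrationRk2}.
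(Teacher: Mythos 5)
Your proposal is correct and follows essentially the same route as the paper: the same entrywise decomposition, the same product-rule identity $\partial_{\bfU_{\bd}}(\bfU_{\bd}\hat\pi_n)=\bfU_{\bd}\partial_{\bfU_{\bd}}\hat\pi_n+\hat\pi_n$, and the same appeal to \Cref{propFundamentalCalculus} along each coordinate line, justified by the exponential decay of $\hat\pi_n$ coming from $\|\bfU\|_1\ge|\bfU_{\bd}|$. The only cosmetic difference is that you spell out the decay estimate and the Fubini step slightly more explicitly than the paper does.
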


\begin{proof}[Proof of Corrolary \ref{corFundamentalCalculus2}]
We first remark that \eqref{eq:14} can be equivalently written as
\begin{equation}
\sum_{d \in [m_1]\times[m_2]}
\int_{\mcM} \bfU_{\bd}\,[\partial_{\bfU_{\bd}}V_n(\bfU)]\, \hat\pi_n(\bfU)\,{\rm d}\bfU = \tau m_1  m_2.
\end{equation}
To establish this identity, it suffices to prove that each integral of the left-hand side is equal to $\tau$.
We have already checked in the proof of \Cref{corFundamentalCalculus1} that the mapping $\bfU_{\bd}\mapsto \pi_n(\bfU)$
is locally-Lipschitz and tends to zero when $\bfU_{\bd}$ tends to infinity. Furthermore, the latter convergence is exponential
so that $\bfU_{\bd}\pi_n(\bfU)$ tends to zero as well, when $\bfU_{\bd}$ tends to infinity.  In view of \Cref{propFundamentalCalculus},
this yields
\begin{align}
\int_{\mcM} \frac{\partial [\bfU_{\bd}\,\hat\pi_n(\bfU)]}{\partial \bfU_{\bd}}\, {\rm d}\bfU  = 0.
\end{align}
Moreover, we remark that
\begin{align}
\frac{\partial [\bfU_{\bd}\, \hat\pi_n(\bfU)]}{\partial \bfU_{\bd}}
        &=  \bfU_{\bd} \frac{\partial\hat\pi_n(\bfU)}{\partial \bfU_{\bd}} + \hat\pi_n(\bfU)
        =  - \bfU_{\bd}\, \frac{\partial V_n(\bfU)}{\tau\partial \bfU_{\bd}}\,\hat\pi_n(\bfU) + \hat\pi_n(\bfU).
\end{align}

Therefore, multiplying by $\tau$ and integrating over $\bbfM$,  we get
$$
\int_{\mcM} \bfU_{\bd}\, \frac{\partial V_n(\bfU)}{\partial \bfU_{\bd}}\,\hat\pi_n(\bfU)\,{\rm d}\bfU = \tau \int_{\mcM} \hat\pi_n(\bfU) {\rm d}\bfU = \tau.
$$
This completes the proof.
\end{proof}

The next Proposition is the matrix analogue of \Cref{variancePosteriorBound}.

\begin{prop}
\label{propVariancePosteriorBoundMatrix}
Let $\hat\pi_n(\bfU)\propto \exp{(-V_n(\bfU)/\tau)}$ be the pseudo-posterior defined
by \eqref{potentialMatrix}. Then, for every $\barbfB\in\bbfM$,
we have
\begin{align}
\int_{\mcM} V_n(\bfU)\,\hat\pi_n({\rm d}\bfU)
        & \le  V_n (\barbfB ) - \frac12\int_{\mcM} \| \barbfB - \bfU \|_{L_2(\bbfX)}^{2}\hat\pi_n({\rm d}\bfU)+m_1m_2\tau.
            \label{lemConcentrationMatrix}
\end{align}
Furthermore,
\begin{equation}
\label{varianceInequalityMatrix}
\int_{\mcM}\| \bfU - \bfBEWA  \|_{L_2(\bbfX)}^{2}\,\hat\pi_n({\rm d}\bfU) \leq  m_1 m_2\tau.
\end{equation}
\end{prop}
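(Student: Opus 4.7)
The plan is to mimic, in the matrix setting, the proof of Proposition \ref{variancePosteriorBound}, using the two technical results that have just been established (Corollaries \ref{corFundamentalCalculus1} and \ref{corFundamentalCalculus2}) as replacements for the naive integration by parts used in the vector case.

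First I would introduce, for fixed $\bfU\in\bbfM$, the auxiliary functional
\[
W_n(\barbfB) = V_n(\barbfB) - \tfrac12\|\bfU-\barbfB\|_{L_2(\bbfX)}^{2}.
\]
A direct expansion shows that the purely quadratic part of $V_n$ cancels against $\tfrac12\|\bfU-\barbfB\|_{L_2(\bbfX)}^{2}$, leaving $W_n(\barbfB)$ equal to an affine function of $\barbfB$ plus $\lambda\|\barbfB\|_1$; hence $W_n$ is convex in $\barbfB$. Moreover $W_n(\bfU)=V_n(\bfU)$, and at any point $\bfU$ of differentiability of $V_n$ (which, by local Lipschitzness of the nuclear norm and Rademacher's theorem, is almost every $\bfU$) we have $\nabla W_n(\bfU)=\nabla V_n(\bfU)$. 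Convexity of $W_n$ then yields the matrix analogue of \eqref{eq:1}:
\[
V_n(\barbfB) \ge V_n(\bfU) + \langle \barbfB - \bfU,\,\nabla V_n(\bfU)\rangle + \tfrac12\|\bfU-\barbfB\|_{L_2(\bbfX)}^{2},
\]
valid for every $\barbfB\in\bbfM$ and for $\hat\pi_n$-almost every $\bfU$.

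The next step is to integrate this inequality against $\hat\pi_n({\rm d}\bfU)$. Corollary \ref{corFundamentalCalculus1} gives $\int_{\mcM}\nabla V_n(\bfU)\,\hat\pi_n({\rm d}\bfU)=\mathbf 0_{m_1,m_2}$ (by the identity $\nabla V_n\,\hat\pi_n = -\tau\nabla\hat\pi_n$), so the term $\int\langle \barbfB,\nabla V_n\rangle\hat\pi_n=0$. Corollary \ref{corFundamentalCalculus2} gives $\int\langle \bfU,\nabla V_n(\bfU)\rangle\hat\pi_n({\rm d}\bfU)=\tau m_1 m_2$. Substituting these two identities into the integrated inequality produces exactly \eqref{lemConcentrationMatrix}, proving the first claim.

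For the second claim, I would set $\barbfB=\bfBEWA$ in \eqref{lemConcentrationMatrix}, which, after rearranging, gives
\[
\tfrac12\int_{\mcM}\|\bfU-\bfBEWA\|_{L_2(\bbfX)}^{2}\,\hat\pi_n({\rm d}\bfU) \le m_1 m_2\tau + V_n(\bfBEWA) - \int_{\mcM}V_n(\bfU)\,\hat\pi_n({\rm d}\bfU).
\]
To close, I apply Jensen's inequality to the convex functional $W_n$, now defined with $\bfU$ replaced by $\bfBEWA$ inside the quadratic term, giving
\[
V_n(\bfBEWA) - \int V_n\,\hat\pi_n \le -\tfrac12\int\|\bfU-\bfBEWA\|_{L_2(\bbfX)}^{2}\,\hat\pi_n({\rm d}\bfU),
\]
which when combined with the previous inequality yields $\int\|\bfU-\bfBEWA\|_{L_2(\bbfX)}^{2}\hat\pi_n({\rm d}\bfU)\le m_1 m_2 \tau$.

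The only real obstacle is ensuring that the integration-by-parts computations of Corollaries \ref{corFundamentalCalculus1}--\ref{corFundamentalCalculus2} apply in the matrix case, in particular that $V_n$ is locally Lipschitz, that $\hat\pi_n$ decays so that boundary terms vanish, and that the set of non-differentiability of the nuclear norm (matrices with non-simple singular values) has Lebesgue measure zero in $\bbfM$; all of these have already been handled in the excerpt, so the argument above goes through without further difficulty.
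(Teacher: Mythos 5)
Your proposal is correct and follows essentially the same route as the paper: the subgradient/strong-convexity inequality for $V_n$ (obtained via the convex auxiliary functional $W_n$, exactly as in the vector-case proof of Proposition \ref{variancePosteriorBound}), integration against $\hat\pi_n$ using Corollaries \ref{corFundamentalCalculus1} and \ref{corFundamentalCalculus2}, and then the choice $\barbfB=\bfBEWA$ together with Jensen's inequality for the second claim. If anything, you spell out the Jensen step for \eqref{varianceInequalityMatrix} more explicitly than the paper's matrix-case proof, which leaves it implicit.
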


\begin{proof}
The convexity of $\bfU \mapsto \| \bfU \|_1$ and the strong convexity of the function $\btheta \mapsto \| \by - \btheta \|_{2}^{2}$ applied in $\btheta = \sum_{i \in [n]} \langle \bfX_i , \bfU \rangle $ imply that for any $\bfU, \barbfB \in \bbfM$,
\begin{equation}\label{eq:15}
V_n\big(\barbfB \big) \geq V_n(\bfU) + \langle \barbfB - \bfU , \nabla V_n(\bfU) \rangle + \frac{1}{2} \|\barbfB - \bfU \|_{L_2(\bbfX)}^{2}.
\end{equation}
In order to prove  \Cref{propVariancePosteriorBoundMatrix} we rely on Corrolaries \ref{corFundamentalCalculus1} and \ref{corFundamentalCalculus2} from \Cref{propFundamentalCalculus}:
\begin{equation}
\label{lemConcentrationRk1Matrix}
\int_{\mcM} \nabla V_n(\bfU) \,\hat\pi_n({\rm d}\bfU)  = 0\qquad\text{and}\qquad
\int_{\mcM} \langle \bfU,  \nabla V_n(\bfU)\rangle \,\hat\pi_n({\rm d}\bfU)  = m_1m_2 \tau.
\end{equation}
We integrate inequality \eqref{eq:15} over $\bbfM$ with respect to the density $\hat\pi_n$ and use equalities \eqref{lemConcentrationRk1Matrix}. This yields
\begin{equation}
V_n\big(\barbfB \big) \geq \int_{\mcM} V_n(\bfU) \,\hat\pi_n({\rm d}\bfU)  - m_1m_2\tau + \frac{1}{2} \int_{\mcM} \|  \barbfB - \bfU  \|_{L_2(\bbfX)}^{2}\,\hat\pi_n({\rm d}\bfU) ,
\end{equation}
which concludes the proof of the first assertion of \Cref{propVariancePosteriorBoundMatrix}. The second assertion follows from the first one by choosing $\barbfB = \hatbfB$.
\end{proof}

\begin{lemma}\label{propfondMatrix}
In the event $\mce_\gamma=\{\| \bxi^{\top}\bbfX\| \le n\lambda/\gamma\}$, for any $\barbfB\in\bbfM$,we have
\begin{align}
\ell_n(\hatbfB,\bfBz)
    &\le \ell_n(\barbfB,\bfBz) +
    \frac{2\lambda}{\gamma} \big(\gamma\|\barbfB\|_1-\gamma\|\hatbfB\|_1+\|\barbfB-\hatbfB\|_1\big)-\|\barbfB-\hatbfB\|_{L_2(\bbfX)}^2+2 H(\tau).
\end{align}
\end{lemma}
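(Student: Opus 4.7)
The plan is to follow the same three-ingredient strategy used in the proof of Lemma \ref{propfond}, translated to the matrix setting using the tools already at hand: the fundamental concentration inequality \eqref{lemConcentrationMatrix} from Proposition \ref{propVariancePosteriorBoundMatrix}, a rewriting of the excess of $V_n$ at $\hatbfB$ in terms of $H(\tau)$, and a duality bound on the noise term using the operator/nuclear-norm duality.

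First, I would rewrite \eqref{lemConcentrationMatrix} as
$$V_n(\hatbfB) \le V_n(\barbfB) + A,$$
where
$$A = V_n(\hatbfB) - \int_{\mcM} V_n(\bfU)\,\hat\pi_n({\rm d}\bfU) + m_1m_2\tau - \tfrac{1}{2}\int_{\mcM}\|\bfU-\barbfB\|_{L_2(\bbfX)}^2\,\hat\pi_n({\rm d}\bfU).$$
Expanding the quadratic $\|\bfU-\barbfB\|_{L_2(\bbfX)}^2 = \|\bfU\|_{L_2(\bbfX)}^2 - 2\langle \bfU,\barbfB\rangle_{L_2(\bbfX)} + \|\barbfB\|_{L_2(\bbfX)}^2$ and using the identity $\int \langle \bfU,\barbfB\rangle_{L_2(\bbfX)}\hat\pi_n = \langle \hatbfB,\barbfB\rangle_{L_2(\bbfX)}$, the integral on the right becomes $\|\hatbfB-\barbfB\|_{L_2(\bbfX)}^2 + \int\|\bfU\|_{L_2(\bbfX)}^2\hat\pi_n - \|\hatbfB\|_{L_2(\bbfX)}^2$. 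Substituting and collecting terms with the definition of $G$ and $H(\tau)$ in \eqref{matrixH}, the $\int\|\bfU\|_{L_2(\bbfX)}^2\hat\pi_n$ contributions combine with the $\lambda\|\bfU\|_1$ integral to produce exactly $H(\tau)$, so that
$$A = H(\tau) - \tfrac{1}{2}\|\hatbfB-\barbfB\|_{L_2(\bbfX)}^2.$$

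Second, I would translate the bound on $V_n(\hatbfB)-V_n(\barbfB)$ into a bound on the excess prediction loss. Expanding $V_n$ against the noise model \eqref{modelMatrix}, one gets
$$2\bigl(V_n(\hatbfB)-V_n(\barbfB)\bigr) = \ell_n(\hatbfB,\bfBz)-\ell_n(\barbfB,\bfBz) - \tfrac{2}{n}\langle \bxi^\top\bbfX,\hatbfB-\barbfB\rangle + 2\lambda\bigl(\|\hatbfB\|_1-\|\barbfB\|_1\bigr),$$
since the $\|\bxi\|_2^2/(2n)$ cross-term cancels. Combining this with the bound $V_n(\hatbfB)-V_n(\barbfB)\le H(\tau)-\tfrac12\|\hatbfB-\barbfB\|_{L_2(\bbfX)}^2$ from the first step yields
$$\ell_n(\hatbfB,\bfBz) \le \ell_n(\barbfB,\bfBz) + \tfrac{2}{n}\langle \bxi^\top\bbfX,\hatbfB-\barbfB\rangle + 2\lambda\bigl(\|\barbfB\|_1-\|\hatbfB\|_1\bigr) + 2H(\tau) - \|\hatbfB-\barbfB\|_{L_2(\bbfX)}^2.$$

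Third, I would control the noise term by the duality between the nuclear and operator norms: $\langle \bxi^\top\bbfX, \hatbfB-\barbfB\rangle \le \|\bxi^\top\bbfX\|\,\|\hatbfB-\barbfB\|_1$. On the event $\mce_\gamma$ this is at most $(n\lambda/\gamma)\|\hatbfB-\barbfB\|_1$, so the noise contribution is bounded by $(2\lambda/\gamma)\|\hatbfB-\barbfB\|_1$. Plugging this in and regrouping the $\lambda$-terms as $(2\lambda/\gamma)\bigl(\|\hatbfB-\barbfB\|_1 + \gamma\|\barbfB\|_1 - \gamma\|\hatbfB\|_1\bigr)$ gives exactly the claimed inequality.

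The main obstacle is bookkeeping in the first step: one must carefully combine the quadratic expansion of $\|\bfU-\barbfB\|_{L_2(\bbfX)}^2$ with the definitions of $V_n$, $G$ and $H(\tau)$ so that everything except $H(\tau) - \tfrac12\|\hatbfB-\barbfB\|_{L_2(\bbfX)}^2$ cancels cleanly. The rest is mechanical and mirrors the vector case, with the single non-trivial substitution being the replacement of the $\ell_1/\ell_\infty$ duality by the nuclear/operator norm duality.
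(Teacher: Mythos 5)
Your proposal is correct and follows essentially the same route as the paper's proof: the same rewriting of \eqref{lemConcentrationMatrix} as $V_n(\hatbfB)\le V_n(\barbfB)+A$ with $A=H(\tau)-\tfrac12\|\hatbfB-\barbfB\|_{L_2(\bbfX)}^2$, the same expansion relating $\ell_n(\hatbfB,\bfBz)-\ell_n(\barbfB,\bfBz)$ to $V_n(\hatbfB)-V_n(\barbfB)$, and the same nuclear/operator-norm (von Neumann) duality bound on the noise term over $\mce_\gamma$. The only difference is the order in which the steps are presented, which is immaterial.
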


\begin{proof}
On the one hand, using the definitions of the prediction loss $\ell_n$ and the empirical loss $L_n$,
as well as the Von Neumann inequality, we get
\begin{align}
\ell_n(\hatbfB,\bfBz) - \ell_n(\barbfB,\bfBz)
        & =  2(V_n(\hatbfB) - V_n(\barbfB))  + \frac2n\, \sum_{i \in [n]}\xi_i \langle \bfX_i, \hatbfB-\barbfB \rangle + 2\lambda(\|\barbfB\|_1-\|\hatbfB\|_1)\\
        &\le 2(V_n(\hatbfB) - V_n(\barbfB))  + \frac2n\, \|\bxi ^{\top}\bbfX\| \|\hatbfB-\barbfB\|_1+2\lambda(\|\barbfB\|_1-\|\hatbfB\|_1)\\
        &\stackrel{(\text{in }\mce_\gamma)}{\le} 2(V_n(\hatbfB) - V_n(\barbfB))  + 2\lambda(\|\barbfB\|_1-\|\hatbfB\|_1) +
            \frac{2\lambda}\gamma\, \|\barbfB-\hatbfB\|_1.\label{eq:7Matrix}
\end{align}
Notice that inequality \eqref{lemConcentrationMatrix} can be rewritten as
\begin{equation}\label{eq:5Matrix}
V_n(\hatbfB) \le V_n (\barbfB )  + \underbrace{V_n(\hatbfB) - \int_{\mcM} V_n \hat\pi_n +  m_1 m_2 \tau -
        \frac{1}{2} \int_{\mcM} \| \barbfB - \bfU \|_{L_2(\bbfX)}^{2}\,\hat\pi_n({\rm d}\bfU)}_{:=A}.
\end{equation}
One can check that
\begin{align}
V_n(\hatbfB)-\int_{\mcM} V_n \,\hat\pi_n
    &= \frac1{2}\|\hatbfB \|_{L_2(\bbfX)}^2+\lambda\|\hatbfB\|_1-\int_{\mcM} \Big(\frac1{2}\|\bfU \|_{L_2(\bbfX)}^2+
        \lambda\|\bfU\|_1\Big)\,\hat\pi_n({\rm d}\bfU),\\
\int \| \bfU - \barbfB \big\|_{L_2(\bbfX)}^2\,\hat\pi_n({\rm d}\bfU)
    &= \|\barbfB-\hatbfB \|_{L_2(\bbfX)}^2+\int  \| \bfU \|_{L_2(\bbfX)}^2\,\,\hat\pi_n({\rm d}\bfU)
        - \| \hatbfB \|_{L_2(\bbfX)}^2.
\end{align}
These inequalities, combined with the definition of $H$, given in \eqref{matrixH}, yield
\begin{equation}
A  = H(\tau)- \frac1{2}\|\barbfB-\hatbfB\|_{L_2(\bbfX)}^2.\label{eq:6Matrix}
\end{equation}
Inserting this inequality in \eqref{eq:5Matrix}  and using relation
\eqref{eq:7Matrix}, we get the claim of the lemma.
\end{proof}

The next step is to establish the counterpart of \Cref{lem:2} in the matrix setting.

\begin{lemma}\label{lem:2Matrix}
For every $J\in [\rk(\barbfB)]$, we have
\begin{align}
\frac{2\lambda}{\gamma} \big(\gamma\|\barbfB\|_1-\gamma\|\hatbfB\|_1+\|\barbfB-\hatbfB\|_1\big)-\|\barbfB-\hatbfB\|_{L_2(\bbfX)}^2
    &\le 4\lambda\|\calP_{\barbfB,J^c}(\barbfB)\|_{1} +
\frac{\lambda^{2}(\gamma+1)^{2} |J|}{\gamma^{2}\kappa_{\barbfB,J,(\gamma+1)/(\gamma-1)}}.
\end{align}
\end{lemma}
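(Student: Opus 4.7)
The plan is to follow the same blueprint as Lemma~\ref{lem:2} in the vector case, with the classical nuclear-norm ``decomposability'' inequality playing the role of the coordinate-wise split into $J$ and $J^c$. Setting $\bfU = \hatbfB - \barbfB$, I would first establish the key inequality
\begin{equation}
\|\barbfB\|_1 - \|\hatbfB\|_1 \le \|\calP_{\barbfB,J^c}^\bot(\bfU)\|_1 - \|\calP_{\barbfB,J^c}(\bfU)\|_1 + 2\|\calP_{\barbfB,J^c}(\barbfB)\|_1.
\end{equation}
The argument relies on the fact that $\calP_{\barbfB,J^c}^\bot(\barbfB) = \bfV_{1,J}\bfSigma_J\bfV_{2,J}^\top$ (the ``$J$-part'' of $\barbfB$), whereas $\calP_{\barbfB,J^c}(\bfU)$ has left (resp. right) singular vectors orthogonal to $\bfV_{1,J}$ (resp. $\bfV_{2,J}$). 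Consequently the two matrices have mutually orthogonal row and column spaces, so their nuclear norms add: $\|\calP_{\barbfB,J^c}^\bot(\barbfB) + \calP_{\barbfB,J^c}(\bfU)\|_1 = \|\calP_{\barbfB,J^c}^\bot(\barbfB)\|_1 + \|\calP_{\barbfB,J^c}(\bfU)\|_1$. Combining this identity with the triangle inequality applied to $\hatbfB = \calP_{\barbfB,J^c}^\bot(\barbfB) + \calP_{\barbfB,J^c}(\barbfB) + \calP_{\barbfB,J^c}(\bfU) + \calP_{\barbfB,J^c}^\bot(\bfU)$, together with $\|\barbfB\|_1 = \|\calP_{\barbfB,J^c}^\bot(\barbfB)\|_1 + \|\calP_{\barbfB,J^c}(\barbfB)\|_1$, yields the displayed inequality.

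Next, I would combine this with the trivial triangle bound $\|\bfU\|_1 \le \|\calP_{\barbfB,J^c}^\bot(\bfU)\|_1 + \|\calP_{\barbfB,J^c}(\bfU)\|_1$ to obtain
\begin{equation}
\gamma(\|\barbfB\|_1 - \|\hatbfB\|_1) + \|\bfU\|_1 \le (\gamma+1)\|\calP_{\barbfB,J^c}^\bot(\bfU)\|_1 - (\gamma-1)\|\calP_{\barbfB,J^c}(\bfU)\|_1 + 2\gamma\|\calP_{\barbfB,J^c}(\barbfB)\|_1.
\end{equation}
Then, setting $c = (\gamma+1)/(\gamma-1)$, I would invoke the definition of the matrix compatibility factor \eqref{CFMatrix}. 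Either $\bfU$ lies in the cone $\calC(\barbfB,J,c)$, in which case
\begin{equation}
(\gamma+1)\|\calP_{\barbfB,J^c}^\bot(\bfU)\|_1 - (\gamma-1)\|\calP_{\barbfB,J^c}(\bfU)\|_1 \le (\gamma+1)\Big(\frac{|J|\,\|\bfU\|_{L_2(\bbfX)}^2}{\kappa_{\barbfB,J,c}}\Big)^{1/2},
\end{equation}
or it lies outside, in which case the left-hand side is nonpositive and the bound above holds trivially.

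Plugging this into the preceding display and multiplying through by $2\lambda/\gamma$, one arrives at a bound of the form $4\lambda\|\calP_{\barbfB,J^c}(\barbfB)\|_1 + 2ab - a^2$ with $a = \|\bfU\|_{L_2(\bbfX)}$ and $b = \lambda(\gamma+1)\gamma^{-1}\sqrt{|J|/\kappa_{\barbfB,J,c}}$. The elementary inequality $2ab - a^2 \le b^2$ then delivers exactly the right-hand side of the claim. The main (mild) obstacle is the decomposability step: care is required to identify $\calP_{\barbfB,J^c}^\bot(\barbfB)$ with the top-$|J|$ part of the SVD of $\barbfB$ and to justify the orthogonality of row/column spaces that makes the nuclear norm additive; everything else is a direct transcription of the vector-case argument with $\bu_J, \bu_{J^c}$ replaced by $\calP_{\barbfB,J^c}^\bot(\bfU), \calP_{\barbfB,J^c}(\bfU)$.
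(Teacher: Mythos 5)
Your proof is correct, and it reaches the paper's key intermediate inequality
\begin{equation}
\|\barbfB\|_1 - \|\hatbfB\|_1
    \le 2\|\calP_{\barbfB,J^c}(\barbfB)\|_1 + \|\calP_{\barbfB,J^c}^\bot(\hatbfB-\barbfB)\|_1
        -\|\calP_{\barbfB,J^c}(\hatbfB-\barbfB)\|_1
\end{equation}
by a genuinely different route. The paper follows \citep{KLT11}: it constructs an explicit dual element $\bfD = \bfV_{1,J}\bfV_{2,J}^\top + \bfPi_{1,J^c}\bfW\bfPi_{2,J^c}$ with $\|\bfD\|=1$ aligned with $\calP_{\barbfB,J^c}(\hatbfB)$, and extracts the inequality from the von Neumann trace inequality $\|\hatbfB\|_1\ge\langle\hatbfB,\bfD\rangle$ together with $\|\calP^\bot_{\barbfB,J^c}(\barbfB)\|_1=\langle\calP^\bot_{\barbfB,J^c}(\barbfB),\bfD\rangle$. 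You instead use the decomposability of the nuclear norm: since $\calP^\bot_{\barbfB,J^c}(\barbfB)=\bfV_{1,J}\bfSigma_J\bfV_{2,J}^\top$ and $\calP_{\barbfB,J^c}(\bfU)$ have mutually orthogonal column and row spaces (indeed $\bfV_{1,J}^\top\bfPi_{1,J^c}=0$ and $\bfV_{2,J}^\top\bfPi_{2,J^c}=0$, so the two matrices satisfy $\bfA^\top\bfC=0$ and $\bfA\bfC^\top=0$), their nuclear norms add, and the triangle inequality does the rest. This is the Recht--Fazel--Parrilo/Negahban--Wainwright style of argument; it is somewhat more transparent and avoids introducing the auxiliary matrix $\bfW$, at the cost of having to justify the additivity identity, which is exactly the ``mild obstacle'' you flag and which does hold here. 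From that point on, the two proofs coincide: the bound $\|\bfU\|_1\le\|\calP^\bot_{\barbfB,J^c}(\bfU)\|_1+\|\calP_{\barbfB,J^c}(\bfU)\|_1$, the compatibility factor with $c=(\gamma+1)/(\gamma-1)$ (your explicit treatment of the case where $\bfU$ falls outside the cone is a point the paper leaves implicit, and your version of the compatibility bound correctly omits the spurious factor $n$ that appears in the paper's display \eqref{OI1e6Matrix}), and the elementary inequality $2ab-a^2\le b^2$.
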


\begin{proof}
To ease notation, let us write $\barbfB_{J}$ and  $\barbfB_{J^c}$ instead of $\calP_{\barbfB,J}(\barbfB) =
\calP^\bot_{\barbfB,J^c}(\barbfB)$ and $\calP_{\barbfB,J^c}(\barbfB)$, respectively. Clearly, $\barbfB =
\barbfB_{J}+\barbfB_{J^c}$. Recall that $r=\rk(\barbfB)$ and $\barbfB = \bfV_1\bfSigma\bfV_2^\top$ is
the singular value decomposition of $\barbfB$. Note that the matrices  $\bfPi_{1,J^c} = \bfI_{m_1}-
\bfV_{1,J}\bfV_{1,J}^\top$  and $\bfPi_{2,J^c} = \bfI_{m_2}-\bfV_{2,J}\bfV_{2,J}^\top$ are orthogonal
projectors and, for every matrix $\bfU\in\mcM$, we have
$\calP_{\barbfB,J^c}(\bfU)= \bfPi_{1,J^c}\bfU\bfPi_{2,J^c}$.

Let $\bfW$ be a $m_1\times m_2$ matrix  such that $\|\bfW\|=1$ and
$\langle \calP_{\barbfB,J^c}(\hatbfB), \bfW\rangle = \|\calP_{\barbfB,J^c}(\hatbfB)\|_1$.
We set $\bfD = \bfV_{1,J}\bfV_{2,J}^\top + \bfPi_{1,J^c}\bfW\bfPi_{2,J^c}$. It is clear that
\begin{align}
\|\barbfB\|_1 \le \|\barbfB_J\|_1 + \|\barbfB_{J^c}\|_1  =  \langle \barbfB_{J},  \bfD\rangle
+ \|\barbfB_{J^c}\|_1
\end{align}
and, in view of the von Neumann inequality,  $\|\hatbfB\|_1 \ge \langle \hatbfB,  \bfD\rangle$. This implies that
\begin{align}\label{eq:15Matrix}
\|\barbfB\|_1 - \|\hatbfB\|_1\le \|\barbfB_{J^c}\|_1 + \langle \barbfB_{J}-\hatbfB,  \bfD\rangle.
\end{align}
As shown in \citep{KLT11},
$\langle \barbfB_{J}-\hatbfB,  \bfD\rangle\le \|\calP_{\barbfB,J^c}^\bot(\barbfB-\hatbfB)\|_1-\|\calP_{\barbfB,J^c}(\hatbfB)\|_1$.
For the sake of self-containedness, we reproduce their proof here. We have
\begin{align}
\langle \barbfB_{J}-\hatbfB,  \bfD\rangle
    & = \langle \barbfB_{J}-\hatbfB,  \bfV_{1,J}\bfV_{2,J}+\bfPi_{1,J^c}\bfW\bfPi_{2,J^c}\rangle\\
    & = \langle \barbfB_{J}-\hatbfB,  \bfV_{1,J}\bfV_{2,J}^\top\rangle+
            \langle \bfPi_{1,J^c}(\barbfB_{J}-\hatbfB)\bfPi_{2,J^c}, \bfW\rangle\\
    & = \langle \barbfB-\hatbfB,  \bfV_{1,J}\bfV_{2,J}^\top\rangle -
            \langle \calP_{\barbfB,J^c}(\hatbfB), \bfW\rangle\\
    & = \langle \barbfB-\hatbfB,  \bfV_{1,J}\bfV_{2,J}^\top\rangle
             - \|\calP_{\barbfB,J^c}(\hatbfB)\|_1.
\end{align}
In addition, using the triangle inequality, we get $\|\calP_{\barbfB,J^c}(\hatbfB)\|_1
\ge \|\calP_{\barbfB,J^c}(\barbfB-\hatbfB)\|_1-\|\barbfB_{J^c}\|_1$. Thus,
we get
\begin{align}
\langle \barbfB_{J}-\hatbfB,  \bfD\rangle
    & \le \langle \barbfB-\hatbfB,  \bfV_{1,J}\bfV_{2,J}^\top\rangle  - \|\calP_{\barbfB,J^c}(\barbfB-\hatbfB)\|_1
        +\|\barbfB_{J^c}\|_1.\label{eq:16matrix}
\end{align}
Finally, one easily checks that $\langle \barbfB-\hatbfB,\bfV_{1,J}\bfV_{2,J}^\top\rangle
= \langle \calP_{\barbfB,J^c}^\bot(\barbfB-\hatbfB),\bfV_{1,J}\bfV_{2,J}^\top\rangle \le
\|\calP_{\barbfB,J^c}^\bot(\barbfB-\hatbfB)\|_1$. Combining this inequality with \eqref{eq:15Matrix} and
\eqref{eq:16matrix}, we get
\begin{align}
\|\barbfB\|_1 - \|\hatbfB\|_1
    \le 2\|\barbfB_{J^c}\|_1 + \|\calP_{\barbfB,J^c}^\bot(\barbfB-\hatbfB)\|_1
        -\|\calP_{\barbfB,J^c}(\barbfB-\hatbfB)\|_1.\label{eq:17Matrix}
\end{align}
If we set $\bfM = \barbfB-\hatbfB$, then we have already shown that
\begin{align}
\frac{2\lambda}{\gamma} \big\{\gamma\|\barbfB\|_1 &- \gamma\|\hatbfB\|_1+\|\barbfB-\hatbfB\|_1\Big\}-\|\barbfB-\hatbfB\|_{L_2(\bbfX)}^2\\
    &\le 4\lambda\|\barbfB_{J^c}\|_1 + \frac{2\lambda}{\gamma}\big(\gamma\|\calP_{\barbfB,J^c}^\bot(\bfM)\|_1
        -\gamma\|\calP_{\barbfB,J^c}(\bfM)\|_1 + \|\bfM\|_1\big)- \|\bfM\|_{L_2(\bbfX)}^2.
\end{align}
We remark that
\begin{eqnarray}
\gamma\|\calP_{\barbfB,J^c}^\bot(\bfM)\|_1 -\gamma\|\calP_{\barbfB,J^c}(\bfM)\|_1 + \|\bfM\|_1
    \le (\gamma+1)\|\calP_{\barbfB,J^c}^\bot(\bfM)\|_1 - (\gamma-1)\|\calP_{\barbfB,J^c}(\bfM)\|_1.
\label{OI1e3Matrix}
\end{eqnarray}
Now, by definition of the compatibility factor $\kappa_{\barbfB,J,c}$ given by equation \eqref{CFMatrix}, we obtain
\begin{equation}
\label{OI1e6Matrix}
\|\calP_{\barbfB,J^c}^\bot(\bfM)\|_1 - \frac{\gamma-1}{\gamma+1} \|\calP_{\barbfB,J^c}(\bfM)\|_1
\le \bigg(\frac{ |J|\,\| \bfM \|^{2}_{L_2(\bbfX)}}{n\kappa_{\barbfB,J,(\gamma+1)/(\gamma-1)}}\bigg)^{1/2}.
\end{equation}
Hence, inequalities \eqref{OI1e3Matrix} end \eqref{OI1e6Matrix} imply that
\begin{equation}
\frac{2\lambda}{\gamma}\big(\gamma\|\calP_{\barbfB,J^c}^\bot(\bfM)\|_1
        -\gamma\|\calP_{\barbfB,J^c}(\bfM)\|_1 + \|\bfM\|_1\big)- \|\bfM\|_{L_2(\bbfX)}^2
        \le  2ab -a^{2},
\label{OI1e7Matrix}
\end{equation}
where we have used the notation $a^2 = \| \bfM\|_{L_2(\bbfX)}^2$ and
$b^2 = \frac{\lambda^2(\gamma+1)^2|J|}{\gamma^2\kappa_{\barbfB,J,(\gamma+1)/(\gamma-1)}}$.
Finally, noticing that $2ab -a^{2}\le b^2$ we get the claim of the lemma.
\end{proof}

Combining the claims of the previous lemmas and taking the minimum with respect to $J$ and $\barbfB$,
we obtain that the inequality
\begin{equation}\label{eq:8Matrix}
\ell_{n} \big( \hatbfB,\bfBz \big) \le \inf_{\substack{\barbfB \in \mcM \\ J\subset[\rk(\barbfB)]}}
\bigg\{\ell_{n}\big(\barbfB,\bfBz \big)+4\lambda\|\calP_{\barbfB,J^c}(\barbfB)\|_{1} +
\frac{\lambda^{2}(\gamma+1)^{2} |J|}{\gamma^{2}\kappa_{\barbfB,J,(\gamma+1)/(\gamma-1)}}\bigg\}+2H(\tau)
\end{equation}
holds in the event $\mce_\gamma$. At this point, we remark that we have proved the more general result of Inequality \eqref{SOI2Matrix}.

The third and last step of the proof consists in assessing the probability of the event $\mce_\gamma$. We rely on Theorem 4.1.1
from \citep{tropp2015introduction} that provides a comprehensive account on matrix concentration inequalities.

\begin{lemma}\label{lem:3Matrix}
Let $\bbfX$ be a fixed design tensor and $v_\bbfX$ be defined by \eqref{vX}. If $\bxi\sim \mathcal N(\mathbf 0_n,\sigma^{2} \bfI_n)$,
then, for all $\e>0$,
\begin{equation}
    \prob\big(\| \bxi ^{\top} \bbfX \| > n \e\big)\le (m_1 + m_2)\,\exp\big(-{n\e^{2}}/{(2\sigma^{2}v_\bbfX^2)}\big).
\end{equation}
\end{lemma}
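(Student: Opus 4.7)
The plan is a direct application of the Matrix Gaussian series bound from Tropp's monograph (Theorem 4.1.1, rectangular version), which the paper already cites. Since $\bxi \sim \mathcal{N}(\mathbf{0}_n,\sigma^2\bfI_n)$, we can write $\xi_i = \sigma \gamma_i$ with $\gamma_1,\ldots,\gamma_n$ i.i.d.\ standard normal. Then
\begin{equation}
\bxi^\top \bbfX \;=\; \sum_{i=1}^n \xi_i \bfX_i \;=\; \sigma \sum_{i=1}^n \gamma_i \bfX_i,
\end{equation}
which is a matrix Gaussian series in the fixed $m_1\times m_2$ matrices $\sigma\bfX_i$.

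The rectangular version of Tropp's theorem states that for such a Gaussian series $\bfZ = \sum_i \gamma_i \bfB_i$ with matrix variance statistic
\begin{equation}
v(\bfZ) \;=\; \max\Big\{\Big\|\sum_i \bfB_i\bfB_i^\top\Big\|,\;\Big\|\sum_i \bfB_i^\top\bfB_i\Big\|\Big\},
\end{equation}
one has $\prob(\|\bfZ\|\ge t)\le (d_1+d_2)\exp(-t^2/(2v(\bfZ)))$. Applying this to $\bfB_i = \sigma\bfX_i$, I compute
\begin{equation}
v(\bxi^\top\bbfX) \;=\; \sigma^2\max\Big\{\Big\|\sum_{i=1}^n \bfX_i\bfX_i^\top\Big\|,\;\Big\|\sum_{i=1}^n \bfX_i^\top\bfX_i\Big\|\Big\} \;=\; n\sigma^2 v_\bbfX^2,
\end{equation}
where the last equality is just the definition of $v_\bbfX$ given in \eqref{vX}.

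Finally, setting $t = n\e$ in Tropp's bound yields
\begin{equation}
\prob\big(\|\bxi^\top\bbfX\| > n\e\big) \;\le\; (m_1+m_2)\exp\!\Big(-\frac{(n\e)^2}{2n\sigma^2 v_\bbfX^2}\Big) \;=\; (m_1+m_2)\exp\!\Big(-\frac{n\e^2}{2\sigma^2 v_\bbfX^2}\Big),
\end{equation}
which is exactly the stated claim. There is no real obstacle here: the argument is almost entirely bookkeeping, the only subtlety being to match the two norms in the definition of $v_\bbfX$ with the two one-sided matrix variance statistics appearing in the rectangular Gaussian series bound. This is then the matrix analogue of \Cref{lemcor1}, used to control the probability of the event $\mce_\gamma$ that drives \Cref{OI1Matrix} and \Cref{theoPosteriorConcentrationMatrix}.
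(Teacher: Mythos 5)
Your proof is correct and follows essentially the same route as the paper: both apply the rectangular matrix Gaussian series bound (Theorem 4.1.1 of Tropp) to $\sum_i \xi_i \bfX_i$, identify the matrix variance statistic with $n\sigma^2 v_\bbfX^2$ via the definition \eqref{vX}, and set $t=n\e$. If anything, your bookkeeping of the variance statistic is slightly more careful than the paper's displayed intermediate steps.
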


\begin{proof}
It is clear that $\xi_i/\sigma$ are standard gaussian random variables. Therefore, we can apply
\citep[Theorem 4.1.1]{tropp2015introduction} to the $m_1\times m_2$ matrix
\begin{equation}
 \bfZ = \sum_{i =1}^n\xi_i \bfX_i / \sigma.
\end{equation}
One easily checks that
\begin{align}
     v(\bfZ)    & = \| \mathbb{E} (\bfZ \bfZ^{\top}) \|\vee \| \mathbb{E} (\bfZ^{\top} \bfZ)\| \\
                & =  \bigg\| \sum_{i =1}^n \bfX_i \bfX_i^{\top} \bigg\|\bigvee \bigg\| \sum_{i =1}^n \bfX_i ^{\top} \bfX_i \bigg\|
                 = v_\bbfX.
\end{align}
Therefore,
\begin{equation}
 \prob\big(\| \bxi ^{\top} \bbfX / \sigma \| > n\e / \sigma \big)\le (m_1 + m_2)\,\exp\big(-{n\e^{2}}/(2\sigma^2v_\bbfX)\big),
\end{equation}
from which we deduce the claim of \Cref{lem:3Matrix}.
\end{proof}

A proof of \Cref{OI1Matrix} can be deduced from the three previous lemmas as follows. Choosing $\gamma=2$ and
$\e ={\lambda/\gamma} \ge\sigma v_{\calX}\sqrt{\nicefrac2n\log((m_1 + m_2)/\delta)}$ in \Cref{lem:3Matrix}, we get that
the event $\mce_\gamma$ has a probability at least $1-\delta$. Furthermore, on this event, we have already
established inequality \eqref{eq:8Matrix}, which coincides with the claim of \Cref{OI1Matrix}.

We conclude this section by proving \Cref{theoPosteriorConcentrationMatrix} which is the analogue
of \Cref{theoPosteriorConcentration}.
Let us introduce the (random) set $\mcb = \{\bfB\in\bbfM: V_n(\bfB) \le \int V_n\,\hat\pi_n + m_1 m_2\tau\}$.
Applying \eqref{eqBobkovMatrix} with $t=\sqrt{m_1 m_2}$, we get $\hat\pi_n(\mcb)\ge 1-2e^{-\sqrt{m_1 m_2}/16}$.
To prove \Cref{theoPosteriorConcentrationMatrix}, it is sufficient to check that in the event $\mce_\gamma$
(in particular, with $\gamma=2$), every matrix $\bfB$ from $\mcb$ satisfies the inequality
\begin{equation}\label{eq:9Matrix}
\ell_{n}(\bfB,\bfBz ) \le \inf_{\substack{\barbfB\in\mcM \\ J\in [\rk(\barbfB)]}}
    \bigg\{\ell_{n}(\barbfB,\bfBz )+ 4\lambda\|\calP_{\barbfB,J^c}(\barbfB)\|_{1}
    +\frac{9\lambda^{2}|J|}{2\kappa_{\barbfB,J,3}}\bigg\} + 8m_1m_2\tau.
\end{equation}
In the rest of this proof, $\bfB$ is always a matrix from $\mcb$. In view of \eqref{lemConcentrationMatrix}, it satisfies
\begin{equation}\label{eq:10Matrix}
V_n(\bfB) \le 2 m_1 m_2\tau + V_n(\barbfB)-
\frac{1}{2} \int_{\mcM} \|\bfU - \barbfB \|_{L_2(\bbfX)}^{2}\,\hat\pi_n({\rm d}\bfU).
\end{equation}
Note that \eqref{eq:10Matrix} holds for every $\barbfB\in\bbfM$. Therefore, it also holds for $\barbfB=\bfB$ and yields
\begin{equation}\label{eq:13Matrix}
 \int_{\mcM} \| \bfU - \bfB \|_{L_2(\bbfX)}^{2}\,\hat\pi_n({\rm d}\bfU) \le  4 m_1 m_2\tau.
\end{equation}
In addition, we have
\begin{equation}\label{eq:11Matrix}
\ell_n(\bfB,\bfBz) - \ell_n(\barbfB,\bfBz)  =  2\big(V_n(\bfB) - V_n(\barbfB) \big) +
    \frac2n\, \sum_{i=1}^n \xi_i \langle \bfX_i, \bfB-\barbfB \rangle + 2\lambda( \| \barbfB \|_{1} - \| \bfB \|_{1} ).
\end{equation}
Combining \eqref{eq:10Matrix}, \eqref{eq:11Matrix} and the Von Neuman inequality, we get that in $\mce_\gamma$
\begin{align}
\ell_n(\bfB,\bfBz) - \ell_n(\barbfB,\bfBz) & \le  4 m_1 m_2 \tau + \frac{2\lambda}{\gamma}\big(\gamma\| \barbfB \|_{1}
        - \gamma\| \bfB \|_{1} +\|\bfB-\barbfB\|_1 \big)\\
        &\qquad -  \int_{\mcM} \| \bfU - \barbfB \|_{L_2(\bbfX)}^{2}\,\hat\pi_n({\rm d}\bfU).\label{eq:12Matrix}
\end{align}
We use now the inequality $\|\bfU-\barbfB\|_{L_2(\bbfX)}^2\ge \nicefrac12\|\bfB-\barbfB\|_{L_2(\bbfX)}^2-\|\bfU-\bfB\|_{L_2(\bbfX)}^2$,
in conjunction with \eqref{eq:13Matrix}, to deduce  from \eqref{eq:12Matrix}  that
\begin{align}
\ell_n(\bfB,\bfBz) - \ell_n(\barbfB,\bfBz)
    & \le  8 m_1 m_2\tau + \frac{2\lambda}{\gamma}\,\|\bfB-\barbfB\|_1 + 2\lambda( \| \barbfB \|_{1} -
    \| \bfB \|_{1} ) - \frac{1}{2} \| \bfB -\barbfB \|_{L_2(\bbfX)}^{2}.
\end{align}
We can apply now \Cref{lem:2Matrix} with $\bfB$ instead of $\hatbfB$ and $\bbfX/\sqrt{2}$ instead of $\bbfX$ in order to get
the claim of \Cref{theoPosteriorConcentrationMatrix}.


\section*{Acknowledgments}
The work of Q.~Paris was supported by the Russian Academic Excellence Project 5-100. The work of A.~Dalalyan was partially
supported by the grant Investissements d'Avenir (ANR-11-IDEX-0003/Labex Ecodec/ANR-11-LABX-0047) and the chair
``LCL/GENES/Fondation du risque, Nouveaux enjeux pour nouvelles donn\'ees''.

\setlength{\bibsep}{3pt}

\bibliography{bibDGP16}
\end{document}